\newtheorem{theorem}{Theorem}[section]
\newtheorem{lemma}[theorem]{Lemma}
\theoremstyle{remark} 
\newtheorem{remark}[theorem]{Remark}
\numberwithin{equation}{section}
\newcommand{\al}{\alpha}
\newcommand{\ep}{\varepsilon}
\newcommand{\cA}{{\mathcal A}}
\newcommand{\cD}{{\mathcal D}}
\newcommand{\cO}{{\mathcal O}}
\newcommand{\cX}{{\mathcal X}}
\newcommand{\RR}{{\mathbb R}}
\newcommand{\CC}{{\mathbb C}}
\newcommand{\TT}{{\mathbb T}}
\newcommand{\ZZ}{{\mathbb Z}}
\newcommand{\PP}{{\mathbb P}}
\newcommand{\abs}[1]{|{#1}|}
\newcommand{\Abs}[1]{\left|{#1}\right|}
\newcommand{\norm}[1]{\|{#1}\|}
\newcommand{\Norm}[1]{\left\|{#1}\right\|}
\newcommand{\aver}[1]{\langle{#1}\rangle}
\newcommand{\Lie}[1]{\mathfrak{L}_{#1}}
\newcommand{\Loper}[1]{\cX_{#1}}
\newcommand{\R}[1]{\mathfrak{R}_{#1}}
\newcommand{\Dioph}[2]{\cD_{#1,#2}}
\def\to{\rightarrow}
\def\epsilon{\varepsilon}
\def\ii{\mathrm{i}}
\def\ee{\mathrm{e}}
\def\re{\mathrm{Re}}
\def\im{\mathrm{Im}}
\def\id{\mathrm{id}}
\def\dist{\mathrm{dist}}
\def\dif{ {\mbox{\rm d}} }
\def\Dif{ {\mbox{\rm D}} }
\def\Tan{ {\mbox{\rm T}} }
\def\pd{ \partial }
\def\sform{\mbox{\boldmath $\omega$}}
\def\aform{\mbox{\boldmath $\alpha$}}
\def\gform{\mbox{\boldmath $g$}}
\def\J{\mbox{\boldmath $J$}}
\def\I{\mbox{\boldmath $I$}}
\def\mani{{\mathcal{M}}}
\def\Anal{{\cA}}
\def\B{{\mathcal{B}}}
\def\U{{\mathcal{U}}}
\def\torus{{\mathcal{K}}}
\def\LieO{\mathfrak{L}}     
\def\H{h}                   
\def\Tdown{H}               
\def\homega{\hat \omega}    
\def\Elag{\Omega_L} 
\def\Esym{E_{\mathrm{sym}}} 
\def\Ered{E_{\mathrm{red}}} 
\def\Elin{E_{\mathrm{lin}}} 
\def\xio{\xi^{\omega}}      
\def\DLieo{\Delta \LieO}
\def\DeltaK{\Delta K}
\def\Deltao{\Delta \omega}
\def\DeltaL{\Delta L}
\def\DeltaLT{\Delta {L^\top}}
\def\DeltaG{\Delta G}
\def\DeltaJ{\Delta J}
\def\DeltaJT{\Delta J^\top}
\def\DeltaGL{\Delta {G_L}}
\def\DeltatOmega{\Delta {\tilde \Omega}}
\def\DeltatOmegaL{\Delta {\tilde \Omega_L}}
\def\DeltaA{\Delta A}
\def\DeltaB{\Delta B}
\def\DeltaNO{\Delta N^0}
\def\DeltaNOT{\Delta N^{0\top}}
\def\DeltaN{\Delta N}
\def\DeltaNT{\Delta N^\top}
\def\DeltaLoperN{\Delta \Loper{N}}
\def\DeltaT{\Delta T}
\def\DeltaTc{\Delta T_c}
\def\DeltaTOI{\Delta \aver{T}^{-1}}
\def\DeltaTcOI{\Delta \aver{T_c}^{-1}}
\def\DeltaLieK{\Delta \LieO K}
\def\LieDeltaK{\LieO \Delta K}
\def\DeltaLieL{\Delta \LieO L}
\def\DeltaLieLT{\Delta \LieO L^\top}
\def\DeltaLieG{\Delta \LieO G}
\def\DeltaLieGL{\Delta \LieO {G_L}}
\def\DeltaLietOmega{\Delta \LieO {\tilde \Omega}}
\def\DeltaLietOmegaL{\Delta \LieO {\tilde \Omega_L}}
\def\DeltaLieB{\Delta \LieO B}
\def\DeltaLieA{\Delta \LieO A}
\def\DeltaLieJ{\Delta \LieO J}
\def\DeltaLieNO{\Delta \LieO N^0}
\def\DeltaLieN{\Delta \LieO N}
\def\Deltao{\Delta \omega}
\def\cteOmega{c_{\Omega,0}}
\def\cteDOmega{c_{\Omega,1}}
\def\ctetOmega{c_{\tilde\Omega,0}}
\def\cteDtOmega{c_{\tilde\Omega,1}}
\def\cteDDtOmega{c_{\tilde\Omega,2}}
\def\cteJ{c_{J,0}}
\def\cteDJ{c_{J,1}}
\def\cteDDJ{c_{J,2}}
\def\cteJT{c_{J^{\top}\hspace{-0.5mm},0}}
\def\cteDJT{c_{J^{\top}\hspace{-0.5mm},1}}
\def\cteG{c_{G,0}}
\def\cteDG{c_{G,1}}
\def\cteDDG{c_{G,2}}
\def\cteDH{c_{\H,1}}
\def\cteDp{c_{p,1}}
\def\cteDc{c_{c,1}}
\def\cteDDc{c_{c,2}}
\def\cteDpT{c_{p^\top\hspace{-0.5mm},1}}
\def\cteXH{c_{X_\H,0}}
\def\cteDXH{c_{X_\H,1}}
\def\cteDDXH{c_{X_\H,2}}
\def\cteDXHT{c_{X_\H^\top\hspace{-0.5mm},1}}
\def\cteXp{c_{X_p,0}}
\def\cteXpT{c_{X_p^\top\hspace{-0.5mm},0}}
\def\cteDXp{c_{X_p,1}}
\def\cteDXpT{c_{X_p^\top\hspace{-0.5mm},1}}
\def\cteDDXp{c_{X_p,2}}
\def\cteDDXpT{c_{X_p^\top\hspace{-0.5mm},2}}
\def\cauxT{\nu} 
\def\sigmaDK{\sigma_{K}}
\def\sigmaDKT{\sigma_{K^\top\hspace{-0.5mm}}}
\def\sigmaB{\sigma_B}
\def\sigmaT{\sigma_T}
\def\sigmaTc{\sigma_{T_c}}
\def\sigmao{\sigma_\omega}
\def\CE{C_{E}}
\def\CEo{C_{E^\omega}}
\def\CEc{C_{E_c}}
\def\Clag{C_{\Omega_L}}
\def\Csym{C_{\mathrm{sym}}}
\def\Cred{C_{\mathrm{red}}}
\def\Creduu{C_{\mathrm{red}}^{1,1}}
\def\Creddu{C_{\mathrm{red}}^{2,1}}
\def\Creddd{C_{\mathrm{red}}^{2,2}}
\def\Clin{C_{\mathrm{lin}}}
\def\Clino{C_{\mathrm{lin}}^\omega}
\def\Comega{{C_\omega}}
\def\COmegaK{C_{\Omega_{K}}}
\def\CT{C_T}
\def\CA{C_A}
\def\CL{C_L}
\def\CLT{C_{L^\top}}
\def\CNO{C_{N^0}}
\def\CNOT{C_{N^{0,\top}}}
\def\CN{C_N}
\def\CNT{C_{N^\top}}
\def\CLoperL{C_{\Loper{L}}}
\def\CLoperLT{C_{\Loper{L}^\top}}
\def\CLoperN{C_{\Loper{N}}}
\def\CxiNO{C_{\xi^N_0}}
\def\Cxio{C_{\xi^\omega}}
\def\CxiN{C_{\xi^N}}
\def\CxiL{C_{\xi^L}}
\def\Cxi{C_{\xi}}
\def\CGL{C_{G_L}}
\def\CtOmegaL{C_{\tilde \Omega_L}}
\def\CDeltaK{C_{\DeltaK}}
\def\CDeltao{C_{\Deltao}}
\def\CDeltaL{C_{\DeltaL}}
\def\CDeltaLT{C_{\DeltaLT}}
\def\CDeltaA{C_{\DeltaA}}
\def\CDeltaB{C_{\DeltaB}}
\def\CDeltaNO{C_{\DeltaNO}}
\def\CDeltaNOT{C_{\DeltaNOT}}
\def\CDeltaN{C_{\DeltaN}}
\def\CDeltaNT{C_{\DeltaNT}}
\def\CDeltaG{C_{\DeltaG}}
\def\CDeltaGL{C_{\DeltaGL}}
\def\CDeltatOmega{C_{\DeltatOmega}}
\def\CDeltatOmegaL{C_{\DeltatOmegaL}}
\def\CDeltaLoperN{C_{\DeltaLoperN}}
\def\CDeltaT{C_{\DeltaT}}
\def\CDeltaTc{C_{\DeltaTc}}
\def\CDeltaTOI{C_{\DeltaTOI}}
\def\CDeltaTcOI{C_{\DeltaTcOI}}
\def\CDeltaI{C_{\Delta,1}}
\def\CDeltaII{C_{\Delta,2}}
\def\CDeltaIII{C_{\Delta,3}}
\def\CDeltatot{C_{\Delta}}
\def\CDeltaJ{C_{\DeltaJ}}
\def\CDeltaJT{C_{\DeltaJT}}
\def\CLieOmegaK{C_{\LieO \Omega_K}}
\def\CLieK{C_{\LieO K}}
\def\CLieL{C_{\LieO L}}
\def\CLieLT{C_{\LieO L^\top}}
\def\CLieB{C_{\LieO B}}
\def\CLieA{C_{\LieO A}}
\def\CLieNO{C_{\LieO N^0}}
\def\CLieN{C_{\LieO N}}
\def\CLieJ{C_{\LieO J}}
\def\CLieG{C_{\LieO G}}
\def\CLietOmega{C_{\LieO \tilde \Omega}}
\def\CLiexi{C_{\LieO \xi}}
\def\CLiexiL{C_{\LieO \xi^L}}
\def\CLiexiN{C_{\LieO \xi^N}}
\def\CLieGL{C_{\LieO G_L}}
\def\CLieOmegaL{C_{\LieO \Omega_L}}
\def\CLietOmegaL{C_{\LieO \tilde \Omega_L}}
\def\CLieDeltaK{C_{\LieDeltaK}}
\def\CDeltaLieK{C_{\DeltaLieK}}
\def\CDeltaLieL{C_{\DeltaLieL}}
\def\CDeltaLieLT{C_{\DeltaLieLT}}
\def\CDeltaLieG{C_{\DeltaLieG}}
\def\CDeltaLieGL{C_{\DeltaLieGL}}
\def\CDeltaLieB{C_{\DeltaLieB}}
\def\CDeltaLieA{C_{\DeltaLieA}}
\def\CDeltaLieJ{C_{\DeltaLieJ}}
\def\CDeltaLieNO{C_{\DeltaLieNO}}
\def\CDeltaLieN{C_{\DeltaLieN}}
\def\CDeltaLietOmega{C_{\DeltaLietOmega}}
\def\CDeltaLietOmegaL{C_{\DeltaLietOmegaL}}
\def\hCDelta{\hat{C}_\Delta}
\begin{document}

\title[KAM theory for partially integrable systems]{
A-posteriori KAM theory with optimal estimates for
partially integrable systems}

\author{Alex Haro$^{\mbox{\textdagger}}$}
\address[\textdagger]{Departament de Matem\`atiques i Inform\`atica, Universitat de Barcelona,
Gran Via 585, 08007 Barcelona, Spain.}
\email{alex@maia.ub.es}

\author{Alejandro Luque$^{\mbox{\textdaggerdbl}}$}
\address[\textdaggerdbl]{Department of Mathematics, Uppsala University, 
Box 480, 751 06 Uppsala, Sweden}
\email{alejandro.luque@math.uu.se}

\begin{abstract}
In this paper we present 
a-posteriori KAM results for existence of 
$d$-dimensional
isotropic invariant tori
for 
$n$-DOF
Hamiltonian systems with additional 
$n-d$ independent
first integrals in involution.
We carry out a covariant formulation
that does not require the use of action-angle variables
nor symplectic reduction techniques.
The main advantage is that we overcome the curse of dimensionality 
avoiding the practical shortcomings produced by 
the use of reduced coordinates, which may
cause difficulties and underperformance when quantifying the
hypotheses of the KAM theorem
in such reduced coordinates.
The results include ordinary
and (generalized) iso-energetic KAM theorems.
The approach is suitable to perform numerical computations 
and computer assisted proofs.
\end{abstract}

\maketitle

\tableofcontents

\section{Introduction}

Persistence under perturbations of regular (quasi-periodic) motion
is one of the most important problems in 
Mechanics and Mathematical Physics, and has
deep implications in Celestial and Statistical Mechanics.
Classical perturbation theory experienced a breakthrough around sixty years ago,
with the work of Kolmogorov \cite{Kolmogorov54}, Arnold \cite{Arnold63a} and
Moser \cite{Moser62}, the founders of what is nowadays known as KAM theory.
They overcame the so called small divisors 
problem, that might prevent the
convergence of the series appearing in perturbative 
methods.
Since then, KAM theory has become 
a full body of knowledge 
that connects fundamental mathematical ideas,
and the literature contains eminent contributions and applications in different contexts
(e.g. unfoldings and bifurcations of invariant tori
\cite{BroerHTB90,BroerW00},
quasi-periodic solutions in
partial differential equations
\cite{BertiBP14,EliassonFK15,Poschel96},
hard implicit function theorems
\cite{Moser66b,Moser66a,Zehnder75,Zehnder76},
stability of perturbed planetary problems
\cite{Arnold63b, ChierchiaP11, Fejoz04}, 
conformally symplectic systems and the spin-orbit problem
\cite{CallejaCLa,CellettiC09},
reversible systems
\cite{Sevryuk07},
or existence of vortex tubes in the Euler equation
\cite{EncisoP15}, just to mention a few).

The importance and significance of KAM theory in 
Mathematics, Physics and Science in general
are accounted in the popular book \cite{Dumas14}. 
But, although KAM theory holds for general dynamical systems under very
mild technical assumptions, its application to concrete systems becomes
a challenging problem. Moreover, the ``threshold of validity'' of the
theory (the size of the perturbation strength for which KAM theorems can be
applied) seemed to be absurdly small in applications to physical 
systems\footnote{This source of skepticism was
pointed out by the distinguished astronomer M. H\'enon, who found a threshold of
validity for the perturbation of the order of $10^{-333}$ in early KAM theorems.}.
With the advent of computers and new developed 
methodologies, the distance between theory and practice has been shortened (see e.g. \cite{CellettiC07} 
for a illuminating historical introduction
and \cite{CellettiC88,LlaveR90} for pioneering computer-assisted applications).
But new impulses have to be made in order to make KAM theory fully applicable to realistic physical systems. 
In this respect, recent years have witnessed a revival of 
this theory.
One direction
that have experienced a lot of progress is the a-posteriori approach based on the parameterization method
\cite{Llave01,GonzalezJLV05,HaroCFLM16,Sevryuk14}.

The main signatures of the parameterization method
are the application of the a-posteriori approach and a covariant formulation, free 
of using any particular system of coordinates.
In fact, the method was baptized as KAM theory
\emph{without action-angle variables} in  \cite{GonzalezJLV05}.
Instead of performing canonical transformations, 
the strategy consists in solving 
the invariance equation for an invariant torus 
by correcting iteratively an approximately invariant one. 
If
the initial approximation is good enough
(relatively to some non-degeneracy conditions), then there is a true invariant torus nearby \cite{CellettiC07,
GonzalezJLV05} (see also \cite{CellettiC95,Llave01,
Moser66b,Russmann76b} for precedents). 
Hence one can consider non-perturbative problems in a natural way.
The approach itself implies the traditional approach (of perturbative nature)
and has been extended to different theoretical settings
\cite{CanadellH17a,
FontichLS09,
GonzalezHL13,
LuqueV11}.
A remarkable feature
of the parameterization method is that it leads to 
very fast and efficient numerical methods for the approximation of
quasi-periodic invariant tori
(e.g.~\cite{CallejaL10, CanadellH17b, FoxM14, HuguetLS12})
We refer the reader to the recent monograph~\cite{HaroCFLM16} for
detailed discussions
(beyond the KAM context)
on the numerical implementations of the method, 
examples, and a more complete bibliography. 

A recently reported success
in KAM theory is the 
design of a general methodology to perform computer assisted proofs of existence of 
Lagrangian invariant tori in a non-perturbative setting \cite{FiguerasHL17}.
The methodology has been applied to several 
low dimensional
problems obtaining almost optimal results\footnote{For example,
for the Chirikov standard map, it is proved that the invariant
curve with golden rotation persists up to $\ep\leq 0.9716$,
which corresponds to a threshold value with a defect of 0.004\%
with respect to numerical observations (e.g. \cite{Greene79,MacKay93}).}.
The program is founded on an a-posteriori theorem
with explicit estimates, whose hypotheses are checked using Fast Fourier
Transform (with interval arithmetics) in combination with a sharp control of the discretization error.
This fine control is crucial to estimate the norm of compositions
and inverses of functions, outperforming the use of symbolic manipulations.
An important consequence is that the rigorous computations are executed in a
very fast way, thus allowing to manipulate millions of Fourier modes comfortably.

One of the typical obstacles to directly apply
the above methodology (and KAM theory in general) to
realistic problems in Mechanics
is the presence of additional first integrals
in involution. 
This degeneracy implies that quasi-periodic invariant tori
appear in smooth families of lower dimensional tori.
The constraints linked to these conserved quantities
can be removed by using classical
symplectic reduction techniques
\cite{Cannas01,MarsdenW74}, 
thus obtaining a lower dimensional Hamiltonian system in a quotient
manifold, where Lagrangian tori can be computed.
This approach has
an undeniable theoretical importance (e.g. the moment map is an
object of remarkable relevance \cite{MarsdenMOPR07}) 
and has been successfully used to obtain
perturbative KAM results in significant examples \cite{ChierchiaP11,Fejoz04,PalacianSY14}.
However, the use of symplectic reduction presents serious difficulties when
applying the a-posteriori KAM approach to the reduced system. This is obvious
because, in the new set of coordinates, it may be difficult to quantify the required
control of the norms for both global objects
(e.g. Hamiltonian system or symplectic structure) and local 
objects (e.g. parameterization of the invariant torus, torsion matrix), or also,
these estimates may become sub-optimal to apply a quantitative KAM theorem.
The goal of this paper is overcoming these drawbacks.
Instead of reducing the system, we will characterize a target lower dimensional torus,
using the original coordinates
of the problem, by constructing a geometrically adapted frame to suitably display the
linearized dynamics on the full family of invariant tori.


In this paper we present two  KAM theorems in a-posteriori format 
for existence of (families of) isotropic\footnote{The invariant tori we consider
in this paper are isotropic, but they are not lower dimensional invariant tori 
in strict meaning (see \cite{BroerHS96}).
The invariant tori 
are neither partially
hyperbolic nor elliptic (see the a-posteriori formulations in
\cite{FontichLS09} and \cite{LuqueV11}),
but partially parabolic tori that appear in families of invariant tori with the same 
frequency vector.}
tori in Hamiltonian systems
with first integrals in involution, avoiding the use of 
action-angle coordinates (in the spirit of \cite{GonzalezJLV05}) 
and symplectic reduction. The first theorem is an ordinary KAM theorem
(also known as \emph{\`a la Kolmogorov})
on existence of an invariant torus with a
fixed frequency vector. 
Of course, if no additional first integrals 
are present in the system, then we recover the
results in \cite{GonzalezJLV05}, with the extra bonus of providing
a geometrically improved scheme with
explicit estimates. 
Our second theorem is a 
version of 
a 
non-perturbative 
iso-energetic KAM theorem (see \cite{BroerH91,DelshamsG96} for perturbative
formulations), in which
one fixes either the energy or one of the first integrals  but modulates
the frequency vector. Actually, we present a general version that allows
us to consider any conserved quantity in involution.
Even if there are no additional first
integrals, the corollary of this result is an iso-energetic theorem
for Lagrangian tori which is a novelty in this covariant formulation.

The statements of the results are written with an eye in the applications.
Hence, we provide explicit estimates in the conditions of
our theorem, so that the hypotheses can be checked using
the computer assisted methodology in \cite{FiguerasHL17}.
In particular, another novelty of this paper is that
estimates are detailed taking advantage of the presence of additional
geometric structures in phase space other than the symplectic structure, such as a Riemannian
metric or a compatible triple, covering a gap in the literature
\cite{HaroCFLM16}. We think researchers interested in applying
these techniques to specific problems can benefit from 
these facts.
It is worth mentioning that the required control
of the first integrals is limited to estimate the norm of objects
that depend only on elementary algebraic expressions and
derivatives.
Quantitative application of these theorems
using computer assisted methods will be presented in a forthcoming work.

The paper is organized as follows.
Section 2 introduces the background and the geometric constructions. 
We present the two main theorems of this paper in 
Section 3: the ordinary KAM theorem and the generalized iso-energetic KAM theorem (with presence of first integrals). 
Some common lemmas are given in Section 4, that control the approximation of different geometric properties.
The proof of the ordinary KAM theorem is given in Section 5, while the proof of the generalized iso-energetic KAM theorem is done in Section 6.
In order to collect the long list of expressions leading to the
explicit estimates and conditions of the theorems, we include
separate tables in the appendix.

\section{Background and elementary constructions}

\subsection{Basic notation}\label{ssec:basic:notation}

We denote by $\RR^m$ and $\CC^m$ the vector spaces of $m$-dimensional vectors with components in 
$\RR$ and $\CC$, respectively, endowed with the norm
\[
|v|= \max_{i=1,\dots,m} |v_i|.
\] 
We consider the real and imaginary projections 
$\re,\im: \CC^m\to \RR^m$, and identify 
$\RR^m \simeq \im^{-1}\{0\} \subset \CC^m$.
Given $U\subset \RR^m$ and $\rho>0$, the complex strip of size $\rho$ is 
$U_\rho= \{ \theta  \in \CC^m \,:\, \re\, \theta \in U \, , \, |\im\, \theta|<\rho \}$.
Given two sets $X,Y \subset \CC^{m}$, $\dist(X,Y)$ is defined as 
$\inf\{|x-y| \,:\, x\in X \, , \, y\in Y\}$.

We denote $\RR^{n_1 \times n_2}$ and $\CC^{n_1\times n_2}$
the spaces of $n_1 \times n_2$ matrices with components in $\RR$ and $\CC$, respectively. 
We will consider the identifications $\RR^m\simeq \RR^{m\times 1}$ and $\CC^m\simeq \CC^{m\times 1}$.
We denote $I_n$ and $O_n$ the $n\times n$ identity and zero matrices, respectively. 
The $n_1\times n_2$ zero matrix is represented by $O_{n_1\times n_2}$.
Finally, we will use the notation $0_n$ to represent the column vector
$O_{n \times 1}$. 
Matrix norms in both  $\RR^{n_1 \times n_2}$ and $\CC^{n_1\times n_2}$ are the ones induced from the corresponding 
vector norms.
That is to say, for a $n_1 \times n_2$ matrix $M$, we have
\[
	|M| = \max_{i= 1,\dots,n_1} \sum_{j= 1,\dots, n_2} |M_{i,j}|.
\]
In particular, if $v$ is a 
$n_2$-dimensional vector, 
$|M v|\leq |M| |v|$. Moreover, $M^\top$ denotes the transpose of the matrix $M$, so that 
\[
 	|M^\top|= \max_{j= 1,\dots, n_2} \sum_{i= 1,\dots,n_1} |M_{i,j}|.
\]

Given an analytic function $f: \U\subset \CC^m \to \CC$, defined in an open set $\U$, 
the action of the $r$-order derivative of $f$ at a point $x\in \U$ on  
a collection of (column) vectors $v_1,\dots, v_r\in \CC^m$, 
with $v_k= (v_{1k}, \dots, v_{mk})$, is
\[
	\Dif^r f(x) [v_1,\dots,v_r] = \sum_{\ell_1,\dots,\ell_r} \frac{\partial^r f}{\partial x_{\ell_1}\dots\partial x_{\ell_r}}(x)\ v_{\ell_1 1} \cdots v_{\ell_r r}, 
\]
where the indices $\ell_1,\dots,\ell_r$ run from $1$ to $m$.

The construction is extended
to vector and matrix valued functions as 
follows: given
a matrix valued function $M:\U\subset\CC^m\to \CC^{n_1\times n_2}$
(whose components $M_{i,j}$ are analytic functions), a point $x\in \U$,
and a collection of (column) vectors $v_1,\dots, v_r\in \CC^m$, 
we obtain a $n_1\times n_2$  matrix  $\Dif^r M(x) [v_1,\dots,v_r]$ 
such that 
\[
\left(\Dif^r M(x) [v_1,\dots,v_r]\right)_{i,j} = \Dif^r M_{i,j}(x) [v_1,\dots,v_r].
\] 
For $r=1$, we will often write $\Dif M(x) [v]= \Dif^1 M(x) [v]$ for $v\in \CC^m$.

Notice that, given a function $f: \U \subset \CC^m \to \CC^n\simeq \CC^{n\times 1}$
we can think of  $\Dif f$ as a matrix function 
$\Dif f: \U \to \CC^{n\times m}$. Hence, $\Dif^1 f(x) [v]= \Dif f(x) v$ for $v\in\CC^m$.
Therefore,
we can apply the transpose to obtain a matrix function $(\Dif f)^\top$, 
which acts on $n$-dimensional vectors, while $\Dif f^\top=\Dif (f^\top)$ acts on $m$-dimensional vectors.
Hence, according to the above notation, the operators $\Dif$ and $(\cdot)^\top$ do not
commute. Therefore, in order to avoid confusion, we must pay attention to the use of parenthesis. 

A function $u : \RR^d \to \RR$ is 1-periodic if $u(\theta+e)=u(\theta)$ for all $\theta \in \RR^d$ and $e \in \ZZ^d$.
Abusing notation, we write
$u:\TT^d \to \RR$, where $\TT^d = \RR^d/\ZZ^d$ is the $d$-dimensional standard torus. 
Analogously, for $\rho>0$, a function $u:\RR^d_\rho\to \CC$
is 1-periodic if $u(\theta+e)=u(\theta)$ for all $\theta \in \RR^d_\rho$ and $e \in \ZZ^d$. 
We also abuse notation and write  $u:\TT^d_\rho \to \RR$, where 
$\TT^{d}_{\rho}=  \{\theta\in \CC^d / \ZZ^d : |\im\ \theta| < \rho\}$ is the complex strip of  $\TT^d$ of width
$\rho>0$. 
We will also write the Fourier expansion of a periodic function as
\[
u(\theta)=\sum_{k \in \ZZ^d} \hat u_k \ee^{2\pi \ii k \cdot \theta}, \qquad \hat u_k =
\int_{\TT^d} u(\theta) \ee^{-2 \pi \ii k \cdot \theta} \dif \theta\,,
\]
and introduce the notation $\aver{u}:=\hat u_0$ for the average. 
The notation in the paragraph is extended
to $n_1 \times n_2$ matrix valued periodic functions 
$M: \TT^{d}_{\rho}\to\CC^{n_1\times n_2}$, for which 
 $\hat M_k \in \CC^{n_1 \times n_2}$ denotes the Fourier coefficient of index $k\in \ZZ^d$.

\subsection{Hamiltonian systems and invariant tori}\label{ssec:inv:tor}

In this paper we consider an open set $\mani$ of $\RR^{2n}$ endowed
with a symplectic form $\sform$, 
that is, a closed ($\dif \sform =0$) non-degenerate differential 2-form on $\mani$. 
We will assume that $\sform$ is exact ($\sform=\dif \aform$ for certain 1-form $\aform$ called
\emph{action form}), so
$\mani$ is endowed with an exact symplectic structure.
The matrix representations of  $\aform$ and $\sform$ are given by the matrix valued functions
\[
\begin{array}{rcl}
a: \mani & \longrightarrow & \RR^{2n}\\
z & \longmapsto & a(z)
\,,
\end{array}
\]
and
\[
\begin{array}{rcl}
\Omega: \mani & \longrightarrow & \RR^{2n \times 2n}\\
z & \longmapsto & \Omega(z)
=(\Dif a(z))^\top-\Dif a(z)
 \,,
\end{array}
\]
respectively.
The non-degeneracy of $\sform$ is equivalent to $\det \Omega(z) \neq 0$ for all $z\in\mani$.

\begin{remark}\label{rem:proto}
The prototype example of symplectic structure is
the \emph{standard symplectic structure}
on $\mani \subset \RR^{2n}$:
$\sform_0= \sum_{i= 1}^n  {\rm d} z_{n+i}\wedge {\rm d}z_i$.
An action form for $\sform_0$ is ${\aform_0}= \sum_{i= 1}^n z_{n+i}\
{\rm d} z_i$.
The matrix representations of $\aform_0$ and $\sform_0$
are, respectively,
\[
a_0(z)= \begin{pmatrix} O_n & I_n \\ O_n & O_n\end{pmatrix} z\,, \qquad
\Omega_0= \begin{pmatrix} O_n & -I_n \\ I_n & O_n \end{pmatrix}\,.
\]
Another usual action form on $\mani\subset \RR^{2n}$ is ${\aform_0}=\tfrac{1}{2}\sum_{i= 1}^n (z_{n+i}\
{\rm d} z_i-z_i \ \dif z_{n+i})$, which is represented as
\[
a_0(z)= \frac{1}{2} \begin{pmatrix} O_n & I_n \\ -I_n & O_n\end{pmatrix} z\,, 
\]
in coordinates. 
\end{remark}

We say that a vector field
$X : \mani\to \RR^{2n}$ is \emph{symplectic} (or \emph{locally-Hamiltonian}) if $L_X \sform=0$ where $L_X$ stands for the
Lie derivative with respect to $X$. Using Cartan's magic formula, and the
fact that $\dif \sform=0$, it turns out that $X$ is symplectic if and
only if 
$i_X \sform$ is closed. 
We say that $X$ is \emph{exact
symplectic} (or \emph{Hamiltonian}) if the contraction $i_X \sform$ is 
exact, i.e., if there
exists a function $\H : \mani\to \RR$ (globally defined) such that $i_X \sform =
-\dif \H$.
In coordinates, an exact symplectic vector
field satisfies 
\[
\Omega(z) X(z)= (\Dif \H(z))^\top\,,
\qquad
\mbox{i.e.,}
\qquad
X(z)= \Omega(z)^{-1} (\Dif \H(z))^\top\,.
\]
Hence, we will use the notation $X=X_\H$.

The Poisson bracket of two functions $f$, $g$ is given by
$\{f,g\} = 
-\sform(X_f,X_g)$.
In coordinates, 
\[
\{f,g\} (z) 
       = \Dif f(z) \Omega(z)^{-1} (\Dif g(z))^\top.
\]
Then, if $\varphi_t$ is the flow of $X_\H$,
it follows that 
\[
\frac{\dif}{\dif t} (f \circ \varphi_t)  = \{f,\H\} \circ \varphi_t, 
\]
and so, $f$ is a conserved quantity if $\{f,\H\}=0$.

In this context, given a Hamiltonian vector field $X_\H$ on $\mani$ and a frequency
vector $\omega \in \RR^d$, with $2\leq d\leq n$, 
we are interested in finding
a parameterization $K : \TT^d \rightarrow \mani$ satisfying
\begin{equation}\label{eq:inv:fv}
X_\H (K(\theta)) = \Dif K(\theta) \omega \,.
\end{equation}
This means that the $d$-dimensional manifold $\torus=K(\TT^d)$ is invariant and the
internal dynamics is given by the constant vector field $\omega$. 
For obvious reasons, equation \eqref{eq:inv:fv} is called \emph{invariance equation for $K$}.
Therefore, given a parameterization $K : \TT^d \rightarrow \mani$ 
and a frequency vector $\omega\in\RR^d$, 
the \emph{error of invariance} is the periodic function
$E:\TT^d \rightarrow \RR^{2n}$ given by
\begin{equation}\label{eq:inv:err}
E(\theta) := X_\H (K(\theta)) - \Dif K(\theta) \omega \,.
\end{equation}
Roughly speaking, if we have a good enough
approximation of a $d$-dimensional invariant torus $\torus$
(that is, the error \eqref{eq:inv:err} is small enough in certain norm),
then one is interested in obtaining a true invariant torus, close to
$\torus$, satisfying \eqref{eq:inv:fv}.

\begin{remark}\label{rem:topo}
Equation \eqref{eq:inv:fv} is the infinitesimal version
of the equation
\[
\varphi_t(K(\theta)) = K(\theta+\omega t)\,,
\]
where $\varphi_t$ is the flow of $X_\H$. Accordingly, we can
study the invariance of $\torus$ using a discrete version of a KAM theorem
for symplectic maps. We refer the reader to \cite{GonzalezJLV05,
FiguerasHL17,
HaroCFLM16}
for such a-posteriori theorems, quantitative estimates, and
applications. However, notice that obtaining the required
estimates for the flow $\varphi_t$ demands to integrate the
equations of the motion up to second order variational equations.
\end{remark}

If $\omega \in \RR^d$ is \emph{nonresonant} (i.e.
if $\omega\cdot k\neq 0$ for every $k \in \ZZ^d\backslash\{0\}$)
then  $z(t)=K(\al+\omega t)$
is a quasi-periodic solution of $X_\H$ for every $\al\in \TT^d$,
and $\al$ is called the initial phase of the parameterization.
It is well known that quasi-periodicity implies additional
geometric properties of the torus (see \cite{BroerHS96,
Moser66c}).
In particular, that the torus is \emph{isotropic}.
This means that the pullback $K^*\sform$ of the symplectic form on the torus $\torus$
vanishes. In matrix notation, the representation of $K^*\sform$
at a point $\theta \in \TT^d$ is
\begin{equation} 
\label{def-OK}
\Omega_K(\theta) 
=  (\Dif K(\theta))^\top \Omega(K(\theta)) \ \Dif K(\theta)\,,
\end{equation}
and so, $\torus$ is isotropic if
$\Omega_K(\theta)=O_{d}$, $\forall \theta \in \TT^d$. If $d=n$ then $\torus$ is \emph{Lagrangian}.
Moreover, quasi-periodicity implies that
\[
\H(K(\theta)) = \aver{\H \circ K}\,, \qquad \forall \theta \in \TT^d\,,
\]
which means that the torus is contained in an energy level of the Hamiltonian.

\begin{remark}
Other topologies can be considered for the ambient manifold.
For example, we may have some information (e.g. from
normal form analysis) that allows us to construct 
tubular coordinates around a torus. In this situation,
it is interesting to look for an invariant torus inside
an ambient manifold of the form $\mani \subset \TT^d \times U$, 
with $U \subset \RR^{2n-d}$.
Notice that both $a(z)$ and $\H(z)$ are 
1-periodic in the first
$d$-variables, and, since the Poisson bracket preserves this property,
also is $X_{\H}$.
Assume that we are interested in obtaining an invariant torus
that preserves the topology of the manifold (typically
called \emph{primary torus}). Then we must choose a parameterization
$K : \TT^d \rightarrow \mani$ such that $K(\theta)-(\theta,0)$ is $1$-periodic
(that is, $K$ is \emph{homotopic to the
zero-section}). Therefore, it turns out that the error function \eqref{eq:inv:err}
is also 1-periodic, so the construction makes sense.
All expressions and formulas presented in this paper remain
valid, and one only needs to take into account that the elements
of $K$ and $\Dif K$ contain an additional term that comes from the
topology. 
We refer to \cite{HaroCFLM16} for a detailed discussion of this case.
The case of intermediate topologies 
$\mani \subset \TT^m \times U$ with $U \subset \RR^{2n-m}$
is similar.
\end{remark}

\subsection{Conserved quantities and families of invariant tori}\label{ssec:conserved}

We will assume that the vector field $X_\H$ has $n-d$ first integrals
in involution $p_1,\dots,p_{n-d}:\mani\to\RR$, that is to say:
\begin{equation}
\label{eq:Poisson:H:p}
\{\H,p_j\}= 0\,,
\quad 1\leq j\leq n-d
\,,
\end{equation}
and
\begin{equation}\label{eq:Poisson:p:p}
\{p_i,p_j\}= 0\,,
\quad 1\leq i,j\leq n-d
\,.
\end{equation}
Consequently, the 
Lie brackets of the corresponding Hamiltonian vector fields vanish and
we have:
\begin{equation}\label{eq:comm:H:p}
\Dif X_\H(z) X_{p_j}(z)= \Dif X_{p_j}(z) X_\H(z)\,,
\quad 1\leq j\leq n-d
\,,
\end{equation}
and
\begin{equation}\label{eq:comm:p:p}
\Dif X_{p_i}(z) X_{p_j}(z)= \Dif X_{p_j}(z) X_{p_i}(z)\,,
\quad 1\leq i,j\leq n-d
\,.
\end{equation}
We will encode the $n-d$ first integrals in an only function $p:\mani\to \RR^{n-d}$, so that 
the involution conditions are rephrased as
\[
\Dif \H(z) \ \Omega(z)^{-1} (\Dif p(z))^\top = 0_{n-d}^\top \,,
\]
and
\[
\Dif p(z) \ \Omega(z)^{-1} (\Dif p(z))^\top = O_{n-d}\,.
\]
Moreover, the corresponding $n-d$ Hamiltonian vector fields are the columns of the matrix function 
$X_p:\mani \to\RR^{2n \times (n-d)}$, with $(X_p)_{i,j}= (X_{p_j})_i$, and 
\[
	X_p(z)= \Omega(z)^{-1} (\Dif p(z))^\top\,.
\]
The commuting conditions are 
\begin{equation}\label{eq:comm:H:p:bis}
\Dif X_\H(z) X_{p}(z)= \Dif X_{p}(z) [X_\H(z)]
\,,
\end{equation}
and
\begin{equation}\label{eq:comm:p:p:bis}
\Dif X_{p_i}(z) X_{p}(z)= \Dif X_{p}(z) [X_{p_i}(z)]\,,
\quad 1\leq i\leq n-d
\,.
\end{equation}

The above setting implies that $p$ generates a $(n-d)$-parameter family of 
local symplectomorphisms. In particular, we introduce
\begin{equation}\label{eq:loc:action}
\Phi_s = \varphi_{s_1}^{1} \circ \cdots \circ \varphi_{s_{n-d}}^{n-d}
\end{equation}
where $s = (s_1,\ldots,s_{n-d})$ belongs to an open
neighborhood of $0$ 
in $\RR^{n-d}$,
and $\varphi_{s_i}^i$ is the flow of the Hamiltonian
vector field $X_{p_i}$.
Notice that this is a local group action of $\RR^{n-d}$ and,
by the commutativity of the flows in \eqref{eq:comm:p:p}, we have
\[
\frac{\partial \Phi_s}{\partial s_i} = X_{p_i} \circ \Phi_s\,.
\]
If the vector fields are linearly independent, 
then
the local group 
action \eqref{eq:loc:action} defines a family of local diffeomorphisms $s \mapsto \Phi_s$
which commutes with the flow of $X_\H$:
\[
\Phi_s \circ \varphi_t = \varphi_t \circ \Phi_s\,.
\]
The map $\Phi_s$ is usually called the \emph{continuous family of symmetries} of $X_\H$.
A consequence of this is the following: if $\torus=K(\TT^d)$ is invariant for
$X_\H$, with frequency vector $\omega$, then
$\torus_s = \Phi_s(\torus)$ is also invariant
for $X_\H$ with the same frequency vector:
\[
\varphi_t(\Phi_s(K(\theta))) = \Phi_s (\varphi_t(K(\theta))) =
\Phi_s(K(\theta+\omega t))\,,
\] 
and so, $\Phi_s \circ K$ is a parameterization of $\torus_s$. 

An important observation is that all invariant tori of the
family are contained in the submanifold
\[
\{z \in \mani \,:\, \H(z)=\H_0\,,\, p(z)=p_0\}\,.
\]
Hence, once the frequency $\omega$ of the torus has been fixed, we cannot
fix the values of $\H_0$ or $p_0$. 
This case is considered in Section~\ref{ssec:theo}, referred to as the 
ordinary case.
If we are interested in
obtaining an invariant torus on a target energy level $\H_0$, 
then we fix the direction of the frequency vector $\omega$ but adjust
its modulus (iso-energetic case). Similarly,
the same kind of adjustment of $\omega$ can be made if we are interested in
obtaining an invariant torus having a prefixed value
of one of the components of $p$ (iso-momentum case).
The previous scenarios can be generalized by considering
any first integral $c : \mani \rightarrow \RR$
that commutes with $\H$ and $p=(p_1,\ldots,p_{n-d})$, that is,
\[
\Dif c(z) X_\H(z)=0\,,
\qquad
\Dif c(z) X_p(z)= 0_{n-d}^\top\,.
\]
For example, we can thing that $c$ is a function of $h$ and $p$
given by $c(z)=f_{c}(\H(z),p(z))$, where
$f_{c}:\RR \times \RR^{n-d} \rightarrow \RR$ is known
(we have the particular cases
$c(z)=h(z)$ and $c(z)=p_j(z)$
with $j \in \{1,\ldots,n-d\}$).
In Section \ref{ssec:theo:iso} we will establish sufficient
conditions to obtain an invariant torus having a prefixed value of the
target conserved quantity $c$ (generalized iso-energetic case).
We emphasize that selecting simultaneously the values of several conserved quantities is not
possible in general, but makes sense for Cantor sets of frequencies.

\subsection{Linearized dynamics and reducibility}\label{ssec:red:lin:eq}

In this section we describe the geometric construction of
a suitable symplectic frame attached to an invariant torus
$\torus$ of a Hamiltonian
system $X_\H$ with conserved quantities $p : \mani \rightarrow \RR^{n-d}$.
Indeed, given a parameterization $K:\TT^d \rightarrow \mani$ satisfying
\begin{equation}\label{eq:inv0}
X_\H(K(\theta))=\Dif K(\theta) \omega\,,
\end{equation}
and given any $m$-dimensional vector subbundle
parameterized by $V : \TT^d \rightarrow \RR^{2n \times m}$, with $1\leq m \leq 2n$,
we introduce the operator
\begin{equation}\label{eq:Loper}
\Loper{V}(\theta) := \Dif X(K(\theta)) V(\theta) - 
\Dif V(\theta) [\omega]\,,
\end{equation}
which corresponds to the infinitesimal displacement
of $V$,
and we say that a bundle is invariant under the linearized equations
if $\Loper{V}(\theta)=O_{2n \times m}$ for every $\theta \in \TT^d$.

We consider the map
$L:\TT^d \rightarrow \RR^{2 n\times n}$ given by
\[
L(\theta) =
\begin{pmatrix} \Dif K(\theta) & X_p(K(\theta)) \end{pmatrix}\,,
\]
and we will assume that $\mathrm{rank}\, L(\theta)=n$ for every $\theta \in \TT^d$.
Then, it turns out that $L(\theta)$ satisfies
\begin{equation}\label{eq:inv1}
\Loper{L}(\theta) = O_{2n \times n}\,, \qquad \forall \theta \in \TT^d\,.
\end{equation}
This invariance follows from two observations. Firstly, taking
derivatives at both sides of \eqref{eq:inv0}, we have
\[
\Dif X_\H(K(\theta)) \Dif K(\theta) = \Dif (\Dif K(\theta)) [\omega]
\,,
\]
and secondly, from the commutation rule \eqref{eq:comm:H:p}, we have:
\begin{align*}
\Dif (X_p(K(\theta))) [\omega]
  = {} & \Dif X_p(K(\theta)) [\Dif K(\theta) \omega] \\
= {} & \Dif X_p(K(\theta)) [X_\H(K(\theta))] \\
= {} & \Dif X_\H(K(\theta)) X_p(K(\theta))\,.
\end{align*}
Then, the property in \eqref{eq:inv1} follows putting together both expressions.

By similar geometric properties
(detailed computations will be presented in Section \ref{ssec:symp}),
it turns out that the subspace
$L(\theta)$ is \emph{Lagrangian}, i.e.,
we have
\[
L(\theta)^\top \Omega(K(\theta)) L(\theta) = O_{2n \times n}
\]
for every $\theta \in \TT^d$. 
Then one can use the geometric structure of the problem
to complement the above frame, thus obtaining linear coordinates on the
full tangent bundle $\Tan \mani$ that
express $\Dif X_\H \circ K$ in a simple way.
This is one of the main ingredients of recent KAM theorems
presented in different contexts and structures (see e.g. 
\cite{CallejaCLa,GonzalezJLV05,
FontichLS09,
GonzalezHL13,
LuqueV11}).
The constructions have been summarized in \cite{HaroCFLM16}
using a common framework that unifies the previous works
and emphasizes the role of the symplectic properties.
We briefly summarize this framework in Section \ref{ssec:sym:frame}.

Hence, we will obtain a map $N: \TT^d \rightarrow \RR^{2n\times n}$
such that the juxtaposed matrix
\[
P(\theta) =
\begin{pmatrix} L(\theta)  & N(\theta) \end{pmatrix}\,,
\]
satisfies $\mathrm{rank}\, P(\theta)=2n$ for every $\theta \in \TT^d$
and also
\begin{equation}\label{eq:Psym}
P(\theta)^\top \Omega(K(\theta)) P(\theta) = \Omega_0\,.
\end{equation}
In this case, we say that $P:\TT^d \rightarrow \RR^{2n\times 2n}$ is
a symplectic frame. The use of these linear coordinates on 
$\Tan_{\torus} \mani$
has several advantages. Among them, it produces a natural and geometrically
meaningful non-degeneracy condition (twist condition) in the KAM theorem,
it simplifies certain computations substantially ($P^{-1}$ can be computed
directly), but most importantly, it reduces the linearized equation
to triangular form as follows:
\[
\Dif X_\H(K(\theta)) P(\theta)
-\Dif P(\theta) [\omega]
= P(\theta) \Lambda(\theta)\,,
\]
with
\[
\Lambda(\theta)
= \begin{pmatrix}
O_n &  T(\theta) \\ 
O_n  & O_n
\end{pmatrix}
\]
and
\begin{align*}
T(\theta) = {} & N(\theta)^\top \Omega(K(\theta))
\left(
\Dif X_\H(K(\theta))
N(\theta)
-\Dif N(\theta) [\omega]
\right) \\
= {} & N(\theta)^\top \Omega(K(\theta)) \Loper{N}(\theta)\,.
\end{align*}
The matrix $T$ is usually called the \emph{torsion matrix} and plays the
role of Kolmogorov's non-degeneracy condition. 

\begin{remark}
The torsion measures the symplectic area determined by the normal bundle and its infinitesimal 
displacement.
Notice that, in the present paper, the torsion involves geometrical
and dynamical properties of both the torus and the first integrals,
and in fact, of the family of $d$-dimensional invariant tori.
\end{remark}

The above setting
allows us a approximate the solutions 
of the linearized equations around the torus
by the solutions of a triangular system which is easier to handle.
The fundamental idea is the following fact:
if $\torus$ is approximately invariant, the above geometrical
properties are still satisfied, modulo some error functions which
can be controlled in terms of the error of invariance. 
The main ingredient is the fact that (under certain assumptions) 
the frame $\theta \mapsto L(\theta)$, 
associated to a $(n-d)$-parameter family of approximately invariant tori, 
is also approximately Lagrangian. 
Hence, the linear dynamics around the torus is approximately reducible. 
This is enough to perform a quadratic scheme to correct the initial
approximation. This will be discussed in Section \ref{sec:lemmas}.

\subsection{Construction of a geometrically adapted frame}\label{ssec:sym:frame}

In this section we deal with the construction of a symplectic
frame on the bundle $\Tan_\torus\mani$ by complementing the column vectors
of a map $L:\TT^d \rightarrow \RR^{2n \times n}$ that parameterizes a Lagrangian subbundle.
To this end, we assume that we have a map $N^0:\TT^d \rightarrow \RR^{2n\times n}$
such that
\begin{equation}\label{eq:cond:CaseI}
\mathrm{rank} \begin{pmatrix} L(\theta) & N^0(\theta) \end{pmatrix} = 2n
\quad
\Leftrightarrow
\quad
\det (L(\theta)^\top \Omega(K(\theta)) N^0(\theta)) \neq 0\,,
\end{equation}
for every $\theta \in \TT^d$. Then, we complement the Lagrangian subspace
generated by $L(\theta)$ by means of a map $N:\TT^d \rightarrow \RR^{2n \times n}$
given by
\[
N(\theta) = L(\theta)A(\theta) + N^0(\theta) B(\theta)\,,
\]
where
\[
B(\theta)=-(L(\theta)^\top \Omega(K(\theta)) N^0(\theta))^{-1}
\]
and $A(\theta)$ is a solution of
\[
A(\theta)-A(\theta)^\top = -B(\theta)^\top N^0(\theta)^\top \Omega(K(\theta)) N^0(\theta) B(\theta).
\]
The solution of this equation is given by
\[
A(\theta)=-\frac{1}{2}(B(\theta)^\top N^0(\theta)^\top \Omega(K(\theta)) N^0(\theta) B(\theta))
\]
modulo the addition of any symmetric matrix.
A direct computation shows that the juxtaposed matrix $P(\theta)=(L(\theta)~N(\theta))$ satisfies \eqref{eq:Psym}.

The map $N^0$ can be obtained by directly complementing the tangent vectors of the
initial (approximately invariant) torus, and after that, be fixed along the iterative
procedure. This is called \textbf{Case I} in chapter 4 of \cite{HaroCFLM16}. 
It has the advantage of being more
general and flexible, but the it requires some extra work to obtain optimal quantitative
estimates (we refer the reader to \cite{FiguerasHL17}, where additional geometric
properties of $N^0$ are controlled). 

A natural way to construct a map $N^0$ systematically is by using additional
geometric information. 
In this paper, we assume that $\mani$ is also endowed with a Riemannian
metric $\gform$, represented in coordinates as the  positive-definite symmetric matrix
valued function $G:\mani\to\RR^{2n\times 2n}$.
Then, we define the linear isomorphism $\J:  \Tan \mani
\to \Tan\mani$ such that 
$\sform_z(\J_z u,v)=\gform_z(u,v)$,
$\forall u,v \in T_z \mani$. Observe also that $\J$ is antisymmetric with
respect to $\gform$, that is, 
$\gform_z(u,\J_z v)=-\gform_z(\J_zu,v)$,
$\forall u,v \in \Tan_z \mani$. 
The matrix representation of $\J$ is given by the matrix valued function 
$J:\mani\to \RR^{2n\times 2n}$.
Then, we have
\begin{equation}\label{eq:prop:struc1}
\Omega^\top = -\Omega\,, \qquad G^\top = G\,, \qquad J^\top \Omega=G\,,
\end{equation}
and we introduce the matrix valued function 
$\tilde \Omega:\mani\to\RR^{2n\times 2n}$, as
\[
\tilde \Omega := J^{\top} \Omega J =  G J\,,
\]
for the representation of the symplectic form in the
frame given by $J$.

Then, we choose the map $N^0$ as follows
\begin{equation}\label{eq:map:N0:CII}
N^0(\theta):= J(K(\theta)) L(\theta)
\end{equation}
and condition \eqref{eq:cond:CaseI} is equivalent to
\[
\det (L(\theta)^\top G(K(\theta)) L(\theta)) \neq 0\,\qquad \forall \theta \in \TT^d\,.
\]
Moreover, the matrices $A(\theta)$ and $B(\theta)$ are expressed as follows
\begin{align*}
A(\theta)={}& -\frac{1}{2} (B(\theta)^\top L(\theta)^\top \tilde \Omega(K(\theta)) L(\theta) B(\theta)) \,, \\
B(\theta)={}& (L(\theta)^\top G(K(\theta)) L(\theta))^{-1}\,.
\end{align*}
This is called \textbf{Case II} in \cite{HaroCFLM16}.

\begin{remark}
We want to point out that the above construction differs slightly from
the discussion in chapter 4 of \cite{HaroCFLM16},
where the linear
isomorphism $\J$ is defined as
$\sform_z(u,v)=\gform_z(u,\J_zv)$. This choice results in
the map $$N^0(\theta)= - J(K(\theta))^{-1} L(\theta)$$ rather than \eqref{eq:map:N0:CII}.
Both constructions are equivalent, but the construction
described here is geometrically more natural and produces better
quantitative estimates.
\end{remark}

There is also the special case where the isomorphism $\J$ is anti-involutive.
that is, $\J^2 = -\I$. Then, we say that the triple $(\sform,\gform,\J)$ is
compatible and that $\J$ endows $\mani$ with a complex structure. 
This is called \textbf{Case III} in chapter 4 of \cite{HaroCFLM16}.
In coordinates,
we have the following properties
\begin{equation*}
J^2= -I_{2n}, \qquad 
\Omega= J^\top \Omega J, \qquad G=  J^\top G J. 
\end{equation*}
In this situation, we have
\[
N^0(\theta) = J(K(\theta)) L(\theta)\,,
\qquad
A(\theta) = O_n\,,
\qquad
B(\theta) =  (L(\theta)^\top G(K(\theta)) L(\theta))^{-1}\,.
\]

It is important to notice that the above constructions
lead to different quantitative estimates in the KAM theorem.
Selecting the best option depends
on the particular problem under consideration.
Since \textbf{Case I} has been fully reported in the
references \cite{FiguerasHL17,HaroCFLM16}, in this paper we will focus
in obtaining sharp quantitative estimates for \textbf{Case II} and \textbf{Case III}.
Hence, we cover a gap in the literature that 
could be valuable in future studies.

\subsection{Univocal determination of an invariant torus of the family}
\label{ssec:univocal}

In this section we describe suitable strategies to avoid the
undeterminations observed in Sections \ref{ssec:inv:tor}
and \ref{ssec:conserved}. Let us recapitulate them:

\begin{itemize}
\item If $\torus = K(\TT^d)$ is a $d$-dimensional invariant torus of $X_\H$ of frequency $\omega$,
then $K^\al(\theta)=K(\theta+\al)$ also parameterizes $\torus$ for every $\al\in\TT^d$.
\item If $\torus = K(\TT^d)$ is a $d$-dimensional invariant torus of $X_\H$ of frequency $\omega$,
and we introduce $K_s = \Phi_s \circ K$ using the family of symmetries,
then $\torus_s= K_s(\TT^d)$ is also an invariant torus of frequency $\omega$ 
for every $s \in \RR^{n-d}$ in the domain of definition.
\end{itemize}

The first indetermination corresponds to the choice of the parameterization
of the invariant object, and it can be avoided simply by fixing an initial phase
of the torus. To this end, we consider a $(2n-d)$-dimensional manifold
given by the preimage of a map $Z: \RR^{2n} \rightarrow \RR^d$ and select the value of $\al$
such that
\[
Z(K^\al(0))=Z_0\,,
\]
for some $Z_0 \in \RR^d$. Notice that $Z$ must be selected in such a way
that the transversality condition
\[
\det (\Dif Z(K^\al(0)) \Dif K^\al(0)) \neq 0\,,
\]
holds in an open set of values $\al \in \RR^d$.

\begin{remark}\label{eq:fix:top}
If $\mani = \TT^d \times U$ and we are considering a non-contractible
invariant tori of the form
$K(\theta)=(K^x(\theta),K^y(\theta)) \in \mani$,
then a typical way to determine the phase univocally is to ask
the following average condition
\[
\aver{K^x-\id} = 0_d\,.
\]
In this case, we select $\al=-\aver{K^x-\id}$. Another
possibility, in the spirit described above, is to select
the transversal plane given by $Z(z)=(z_1,\ldots,z_d)$.
\end{remark}

The second indetermination corresponds to a choice of a given invariant
torus inside the $(n-d)$-parameter family described in Section \ref{ssec:conserved}.
In this case, 
we need to fix additional
$(n-d)$ conditions in order to define univocally a single torus of the family.
For example, we may assume that there is a map $q : \mani \rightarrow \RR^{n-d}$ satisfying
\begin{equation}\label{eq:cond:pq}
\Dif q(z) X_p(z) 
= \Dif q(z) \ \Omega(z)^{-1} (\Dif p(z))^\top = I_{n-d}\,,
\end{equation}
which means that $\{q_i,p_j\}=\delta_{ij}$.
For obvious reasons, $p$ and $q$ are referred to as the \emph{generalized momentum} and
the \emph{generalized conjugated position}, respectively. Then, we can
determine univocally a torus in the family by asking for the extra
equations 
\[
q \circ K (0) = q_0 \in \RR^{n-d}\,.
\]
We can recover the full family by considering the map
\[
s \longmapsto q \circ \Phi_s \circ K = q \circ K + s\,,
\]
where we used that
\begin{align*}
\frac{\pd}{\pd s_i} (q\circ \Phi_s \circ K)
= {} & 
(\Dif q \circ \Phi_s \circ K)
\pd_{s_i} (\Phi_s \circ K)
 \\
= {} & 
(\Dif q \circ \Phi_s \circ K)
(X_{p_i} \circ \Phi_s \circ K)
= e_i
\end{align*}

\begin{remark}
The above construction can be readily generalized asking the map $q$ to satisfy
\[
\det (\Dif q(z) X_p(z))\neq 0
\]
instead of \eqref{eq:cond:pq}.
\end{remark}

It is clear that both indeterminations can be addressed simultaneously
by fixing $n$ conditions. This can be done for example by asking for
a transversality condition on the Lagrangian frame $\theta \mapsto L(\theta)$
described in Section \ref{ssec:red:lin:eq}
at a given point. To this end, we denote
\[
K_{\al,s}(\theta)=\Phi_s(K(\theta+\al))\,,
\qquad \al \in \RR^d\,,
\qquad s \in \RR^{n-d}\,,
\]
we consider a map $Q:\mani \rightarrow \RR^n$,
and we ask for the condition
\[
Q(K_{\al,s}(0)) = Q_0
\]
for a given point $Q_0\in \RR^n$. 
It this situation, the transversality condition reads
\[
\det \left(\Dif Q(K_{\al,s}(0))L_{\al,s}(0) 
\right)
\neq 0
\]
where
\[
\theta \mapsto L_{\al,s}(\theta) =
\begin{pmatrix}
\Dif K_{\al,s}(\theta) & X_p(K_{\al,s}(\theta))
\end{pmatrix}
\]
is the Lagrangian frame associated with the torus $\torus_{\al,s}$. For
example, a natural choice would be
\[
Q(z)=\begin{pmatrix}
Z(z) \\
q(z)
\end{pmatrix}\,,
\]
where $Z$ is selected to fix the phase of the parameterizations and
$q$ are generalized positions associated with $p$.
Depending on the topology of the ambient space, we may
consider other choices (see Remark \ref{eq:fix:top}).

\section{A-posteriori KAM theory for partially integrable Hamiltonian systems}

In this section, we present two a-posteriori KAM theorems for $d$-dimensional 
quasi-periodic invariant tori
in Hamiltonian systems with $n$ degrees-of-freedom that have $n-d$ additional first integrals
in involution. To this end, we will assume that the frequency vector $\omega$
satisfies Diophantine conditions. Specifically, we denote the set of Diophantine vectors as
\begin{equation}\label{eq:def:Dioph}
\Dioph{\gamma}{\tau} =
\left\{
\omega \in \RR^d \, : \,
\abs{k \cdot \omega} \geq  \frac{\gamma}{|k|_1^{\tau}}
\,, 
\forall k\in\ZZ^d\backslash\{0\}
\,,
|k|_1 = \sum_{i= 1}^d |k_i|
\right\}\,,
\end{equation}
for certain $\gamma >0$ and $\tau \geq d-1$.

In Section \ref{ssec-anal-prelims} we set some basic notation regarding Banach spaces and norms 
of analytic functions.
In Section \ref{ssec:theo} we present the statement of a KAM theorem for existence (and
persistence) of $d$-dimensional invariant tori having fixed 
frequency vector $\omega \in \Dioph{\gamma}{\tau}$.
This corresponds to the so-called ordinary (\`a la Kolmogorov) KAM theorem.
In Section \ref{ssec:theo:iso} we present and adapted version of the theorem
that generalizes the iso-energetic approach. 
Section~\ref{sec:lemmas} is devoted to the 
control of approximate geometric properties, the anteroom of the proofs
of the main theorems in Sections \ref{sec:proof:KAM} and \ref{sec:proof:KAM:iso}.
We will pay special attention
in providing explicit and rather optimal bounds,
with an eye in the application of the theorems and in computer assisted proofs.
The constants have been collected in a series of tables in Appendix~\ref{ssec:consts}.

\subsection{Analytic functions and norms}\label{ssec-anal-prelims}

In this paper we work with 
real analytic functions defined in
complex neighborhoods of real domains.
We will consider the sup-norms of (matrix valued) analytic functions and their derivatives
(see the notation in Section \ref{ssec:basic:notation}). 
That is, for $f: \U\subset \CC^m \to \CC$, we consider 
\[
\norm{f}_\U= \sup_{x\in \U} |f(x)|,
\]
and
\[
\norm{\Dif^r f}_\U= \sum_{\ell_1,\dots,\ell_r} \Norm{\frac{\partial^r f}{\partial x_{\ell_1}\dots\partial x_{\ell_r}}}_\U,
\]
that could be infinite. 
For $M:\U \subset \CC^m\to \CC^{n_1\times n_2}$, we consider the norms  
\[
\norm{M}_\U= \max_{i= 1,\dots,n_1} \sum_{j= 1,\dots,n_2} \norm{M_{i,j}}_\U \,,
\]
\[
\norm{\Dif^r M}_\U= \max_{i= 1,\dots,n_1} \sum_{j= 1,\dots,n_2} \Norm{\Dif^r M_{i,j}}_\U\,,
\]
and we notice, of course, that the norms $\norm{M^\top}_\U$ and $\norm{\Dif^r M^\top}_\U$ 
are obtained simply by interchanging the role of the indices $i$ and $j$.

Let us remark that the above norms present Banach algebra-like properties.
For example, given $r$ analytic functions $v_1,\dots, v_r: \U\to \CC^m\simeq \CC^{m\times 1}$,
then the function 
$\Dif^r M [v_1,\dots, v_r]: \U\subset \CC^m\to \CC^{n_1\times n_2}$ defined as 
\[
\Dif^r M [v_1,\dots, v_r](x)= \Dif^r M(x) [v_1(x),\dots, v_r(x)]
\]
is also analytic, and we have
\begin{align*}
&\norm{\Dif^r M [v_1,\dots, v_r]}_\U 
\leq 
\max_{i=1,\ldots,n_1} \sum_{j=1}^{n_2} \norm{\Dif^r M_{i,j} [v_1,\ldots,v_r]}_\U \\
&\qquad \leq 
\max_{i=1,\ldots,n_1} \sum_{j=1}^{n_2} \Norm{\sum_{\ell_1,\ldots,\ell_r} \frac{\partial^r M_{i,j}}{\partial x_{\ell_1} \cdots \partial x_{\ell_r}} v_{\ell_1 1} \cdots v_{\ell_r r} }_\U \\
&\qquad \leq 
\max_{i=1,\ldots,n_1} \sum_{j=1}^{n_2} \sum_{\ell_1,\ldots,\ell_r} \Norm{\frac{\partial^r M_{i,j}}{\partial x_{\ell_1} \cdots \partial x_{\ell_r}} }_\U \max_{\ell=1,\ldots,r} \Norm{v_{\ell,1}}_\U 
\cdots \max_{\ell=1,\ldots,r} \Norm{v_{\ell,r}}_\U \\
&\qquad = \Norm{\Dif^r M}_\U \ \norm{v_1}_\U\cdots \norm{v_r}_\U\,.
\end{align*}
There is also a similar bound for the action of the transpose:
\begin{align*}
&\norm{(\Dif^r M [v_1,\dots, v_r])^\top}_\U 
\leq 
\max_{j=1,\ldots,n_2} \sum_{i=1}^{n_1} \norm{\Dif^r M_{i,j} [v_1,\ldots,v_r]}_\U \\
&\qquad \leq \Norm{\Dif^r M^\top}_\U \ \norm{v_1}_\U\cdots \norm{v_r}_\U\,.
\end{align*}
In addition, given $M_1: \U \subset \CC^m\to \CC^{n_1\times n_3}$ and $M_2: \U \subset \CC^m\to \CC^{n_3\times n_2}$,
we have 
\[
\norm{M_1 M_2}_\U \leq 
\norm{M_1}_\U \norm{M_2}_\U \, ,
\]
and 
\[
\norm{\Dif(M_1 M_2)}_\U \leq 
\norm{\Dif M_1}_\U \norm{M_2}_\U + \norm{M_1}_\U \norm{\Dif M_2}_\U  \, .
\]

The particular case of real-analytic periodic functions deserves some additional comments.
We denote by $\Anal(\TT^d_\rho)$ the Banach space of holomorphic functions
$u:\TT^d_\rho \to \CC$, that can be continuously extended to 
$\bar\TT^d_\rho$, and such that
$u(\TT^d) \subset \RR$ (real-analytic), endowed with the norm
\[
\norm{u}_\rho = \norm{u}_{\TT^d_\rho}= \max_{|\im\theta|\leq \rho} |u(\theta)|\,.
\] 
As usual in the analytic setting, we will use Cauchy estimates to
control the derivatives of a function. 
Given $u \in \Anal(\TT^d_\rho)$, with $\rho>0$, then for any $0<\delta<\rho$
the partial derivative $\pd u/\pd {x_\ell}$ belongs to $\Anal(\TT^d_{\rho-\delta})$
and we have the estimates
\[
\Norm{
\frac{\pd u}{\pd x_{\ell}}}_{\rho-\delta} \leq \frac{1}{\delta}\norm{u}_\rho, 
\qquad
\Norm{\Dif u}_{\rho-\delta} \leq \frac{d}{\delta}\norm{u}_\rho, 
\qquad
\Norm{(\Dif u)^\top}_{\rho-\delta} \leq \frac{1}{\delta}\norm{u}_\rho.
\]
The above definitions and estimates extend naturally to matrix valued function, that is,
given  $M: \TT^d_\rho \to \CC^{n_1\times n_2}$, with components in $ \Anal(\TT^d_\rho)$, we have
\[
\norm{\Dif M}_{\rho-\delta} 
=   \max_{i=1,\ldots,n_1} \sum_{j= 1,\dots, n_2}  \Norm{\Dif M_{i,j}}_{\rho-\delta} 
\leq \frac{d}{\delta} \norm{M}_{\rho}.
\]
A direct consequence is that $\norm{\Dif M^\top}_{\rho-\delta} \leq \frac{d}{\delta} \norm{M^\top}_{\rho}$.

As it was mentioned in Section \ref{ssec:basic:notation}, the
operators $\Dif$ and $(\cdot)^\top$ do not
commute. In particular,
given a real analytic vector function $w: \TT^d_\rho \to \CC^n\simeq \CC^{n\times 1}$, we have:
\[
	\norm{\Dif w}_{\rho-\delta} \leq \frac{d}{\delta} \norm{w}_\rho,\quad 
	\norm{\Dif w^\top}_{\rho-\delta} \leq \frac{d}{\delta} \norm{w^\top}_\rho \leq \frac{n d}{\delta} \norm{w}_\rho,\quad
	\norm{(\Dif w)^\top}_{\rho-\delta} \leq \frac{n}{\delta}\norm{w}_\rho.
\]

\subsection{Ordinary KAM theorem}\label{ssec:theo}

At this point, we are ready to state sufficient conditions to guarantee the existence of
a $d$-dimensional invariant torus with fixed frequency close to an approximately invariant one.
Notice that the hypotheses in Theorem \ref{theo:KAM} are tailored to be verified with a finite amount of computations.

The result is written simultaneously to \textbf{Case II} and \textbf{Case III}
(see Section \ref{ssec:sym:frame}). Estimates corresponding to \textbf{Case I} can be easily
obtained without any remarkable difficulty (see e.g. \cite{FiguerasHL17} for details).
Hence, given a parameterization of a torus $\torus=K(\TT^d)$ (not necessarily invariant)
and a tangent frame $L:\TT^d \rightarrow \RR^{2n \times n}$,
the normal frame $N:\TT^d \rightarrow \RR^{2n \times n}$ is constructed as follows:
\begin{equation}\label{eq:N}
N(\theta):= L(\theta) A(\theta) + N^0(\theta) B(\theta)\,,
\end{equation}
where
\begin{align}
N^0(\theta)={} & J(K(\theta)) L(\theta) \,, \label{eq:N0} \\
B(\theta)={}& (L(\theta)^\top G(K(\theta)) L(\theta))^{-1}\,, \label{eq:B} \\
A(\theta)={}& 
\begin{cases} -\displaystyle\frac{1}{2} (B(\theta)^\top L(\theta)^\top \tilde \Omega(K(\theta)) L(\theta) B(\theta)), & 
\text{if \textbf{Case II;}} \\
0, &  \text{if \textbf{Case III.}} 
\end{cases}
\, \label{eq:A} 
\end{align}
The torsion matrix $T:\TT^d\to\RR^{n\times n}$, given by 
   \begin{equation}\label{eq:T}
T(\theta) = N(\theta)^\top \Omega(K(\theta)) \Loper{N}(\theta)\,,
\end{equation}
where 
\begin{equation}
\Loper{N}(\theta)= \Dif X_\H (K(\theta)) N(\theta) -\Dif N (\theta)[\omega]\,,
\end{equation}
measures the infinitesimal twist of the normal bundle. With this geometric ingredients we are ready to state 
our main theorem, in the ordinary case.

\begin{theorem}[KAM theorem with first integrals]\label{theo:KAM}
Let us consider an exact symplectic structure $\sform=\dif \aform$ 
and a Riemannian metric $\gform$ on the
open set $\mani\subset\RR^{2n}$.
Let $\H$ be a Hamiltonian function, having $n-d$ first
integrals in involution $p=(p_1,\ldots,p_{n-d})$, with $2\leq d \leq n$,
and let $c$ be any first integral in involution with $(h,p)$. 
Let $K:\TT^d\to \mani$ be a parameterization of an approximately invariant torus
with frequency vector $\omega\in \RR^d$, and consider the tangent frame  $L:\TT^d\to\RR^{2n\times n}$ given by
\begin{equation}\label{eq:L:theo}
L(\theta)=
\begin{pmatrix} \Dif K(\theta) & X_p(K(\theta)) \end{pmatrix}\,.
\end{equation}
Then, we assume that the following hypotheses hold.
\begin{itemize}
\item [$H_1$]
The global objects can be analytically extended
to the complex domain $\B \subset \CC^{2n}$, and there
are constants 
that quantify the control of their analytic norms. 

For the geometric structures $\sform, \gform, \J, \tilde{\sform}= \J^*\sform$ in $\B$, 
the matrix functions
$\Omega,  G, J, \tilde \Omega: \B \rightarrow \CC^{2n \times 2n}$
satisfy:
\begin{align*}
&\norm{\Omega}_{\B} \leq \cteOmega\,,
&& \norm{\Dif \Omega}_{\B} \leq \cteDOmega\,, 
&&  \\
&  \norm{\tilde \Omega}_{\B} \leq \ctetOmega\,, 
&& \norm{\Dif \tilde\Omega}_{\B} \leq \cteDtOmega\,,
&& \norm{\Dif^2 \tilde \Omega}_{\B} \leq \cteDDtOmega\,, \\
&\norm{G}_{\B} \leq \cteG\,,
&& \norm{\Dif G}_{\B} \leq \cteDG\,, 
&& \norm{\Dif^2 G}_{\B} \leq \cteDDG\,,  \\
& \norm{J}_{\B} \leq \cteJ\,, 
&& \norm{\Dif J}_{\B} \leq \cteDJ\,,
&& \norm{\Dif^2 J}_{\B} \leq \cteDDJ\,, \\
& \norm{J^{\top}}_{\B} \leq \cteJT\,, && \norm{\Dif J^\top}_{\B} \leq \cteDJT\,.
&& 
\end{align*}

For the Hamiltonian 
$\H: \B \rightarrow \CC$ and its corresponding vector field
$X_\H: \B \rightarrow \CC^{2n}$, we have
:
\begin{align*}
&  \norm{\Dif \H}_{\B} \leq \cteDH\,,
&& \\
& \norm{X_\H}_{\B} \leq \cteXH\,, 
&& \norm{\Dif X_\H}_{\B} \leq \cteDXH\,, \\
& \norm{\Dif^2 X_\H}_{\B} \leq \cteDDXH \,, 
&& \norm{\Dif X_\H^\top}_{\B} \leq \cteDXHT\,. 
\end{align*}

For the first integrals 
$p:\B\to \CC^{n-d}$ and the corresponding vector fields 
$X_p: \B \rightarrow \CC^{2n \times (n-d)}$, we have:
\begin{align*}
&\norm{\Dif p}_{\B} \leq \cteDp \,,
&& \norm{\Dif p^\top}_{\B} \leq \cteDpT \,,
&& \\
& \norm{X_p}_{\B} \leq \cteXp\,,
&& \norm{\Dif X_p}_{\B} \leq \cteDXp\,,
&& \norm{\Dif^2 X_p}_{\B} \leq \cteDDXp\,, \\
& \norm{X_p^\top}_{\B} \leq \cteXpT\,, 
&& \norm{\Dif X_p^\top}_{\B} \leq \cteDXpT\,, 
&& \norm{\Dif^2 X_p^\top}_{\B} \leq \cteDDXpT\,.
\end{align*}

For the first integral $c: \B \to \CC$ we have
\[
\norm{\Dif c}_{\B} \leq \cteDc \,.
\]
\item [$H_2$] 
The parameterization $K$ is real analytic in
a complex strip $\TT^d_\rho$, with $\rho>0$, which is contained
in the global domain:
\[
\dist (K(\TT^d_\rho),\pd \B)>0.
\]
Moreover, the components of $K$ and $\Dif K$
belong to $\Anal(\TT^d_\rho)$, and
there are constants $\sigmaDK$ and $\sigmaDKT$ such that
\[
\norm{\Dif K}_{\rho} < \sigmaDK\,, \qquad
\norm{(\Dif K)^\top}_{\rho} < \sigmaDKT\,.
\]
\item [$H_3$] 
We assume that $L(\theta)$ given by \eqref{eq:L:theo} has maximum rank for every $\theta \in \bar\TT_\rho^d$. 
Moreover,  there exists a constant
$\sigmaB$ such that
\[
\norm{B}_\rho < \sigmaB,
\]
where $B(\theta)$ is given by \eqref{eq:B}.

\item [$H_4$] 
There
exists a constant $\sigmaT$ such that
\[
\abs{\aver{T}^{-1}} < \sigmaT,
\]
where $T(\theta)$ is given by \eqref{eq:T}.

\item [$H_5$] 
The frequency $\omega$ belongs to $\Dioph{\gamma}{\tau}$, given
by \eqref{eq:def:Dioph}, for certain $\gamma >0$ and $\tau \geq d-1$.
\end{itemize}
Under the above hypotheses, for each $0<\rho_\infty<\rho$ 
there exists a constant $\mathfrak{C}_1$ such that, if
the error of invariance
\begin{equation}\label{eq:invE}
E(\theta)=X_\H(K(\theta))-\Dif K(\theta) \omega, 
\end{equation}
satisfies
\begin{equation}\label{eq:KAM:HYP}
\frac{\mathfrak{C}_1 \norm{E}_\rho}{\gamma^4 \rho^{4 \tau}} < 1\,,
\end{equation}
then there exists an invariant torus
$\torus_\infty = K_\infty(\TT^d)$ with frequency $\omega$, satisfying
$K_\infty \in \Anal(\TT^{d}_{\rho_\infty})$ and
\[
\norm{\Dif K_\infty}_{\rho_\infty} < \sigmaDK\,,
\qquad
\norm{(\Dif K_\infty)^\top}_{\rho_\infty} < \sigmaDKT\,,
\qquad
\dist(K_\infty(\TT^d_{\rho_\infty}),\pd \B) > 0\,.
\]
Furthermore, the objects are close to the original ones: there exist constants $\mathfrak{C}_2$ and $\mathfrak{C}_3$ such that
\begin{equation}\label{eq:close}
\norm{K_\infty - K}_{\rho_\infty} < \frac{\mathfrak{C}_2 \norm{E}_\rho}{\gamma^2 \rho^{2\tau}}\,,
\qquad
\abs{\aver{c \circ K_\infty} - \aver{c \circ K}} < \frac{\mathfrak{C}_3 \norm{E}_\rho}{\gamma^2 \rho^{2\tau}}\,,
\end{equation}
The constants $\mathfrak{C}_1$, $\mathfrak{C}_2$ and $\mathfrak{C}_3$ are
given explicitly in 
Appendix \ref{ssec:consts}.
\end{theorem}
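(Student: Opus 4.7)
The plan is to apply a quasi-Newton scheme based on the parameterization method, and to iterate it with the standard KAM argument of quadratic convergence against geometric loss of analyticity. Given an approximately invariant $K$ with invariance error $E$ as in \eqref{eq:invE}, I would seek a correction $\Delta K\colon \TT^d \to \RR^{2n}$ such that $\bar K = K + \Delta K$ reduces the error quadratically. Linearising the invariance equation \eqref{eq:inv:fv} around $K$ gives
\[
\Dif X_\H(K(\theta))\,\Delta K(\theta) - \Dif \Delta K(\theta)\,\omega = -E(\theta) + R_2(\theta),
\]
with $|R_2|$ of order $|\Delta K|^2$ by Taylor.

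The linear part is handled by exploiting the symplectic frame $P = (L\mid N)$ of Section \ref{ssec:sym:frame} built via \eqref{eq:N}--\eqref{eq:A}. Writing $\Delta K = P\,\xi$ with $\xi = (\xi^L, \xi^N)^\top$, and using that (up to terms of order $\|E\|_\rho$ controlled by the lemmas of Section \ref{sec:lemmas}) the frame $P$ reduces the linearised operator to a block-triangular form whose off-diagonal block is the torsion $T$, the equation splits into the cohomological system
\begin{align*}
\Dif \xi^L(\theta)\,\omega &= T(\theta)\,\xi^N(\theta) + \eta^L(\theta), \\
\Dif \xi^N(\theta)\,\omega &= \eta^N(\theta),
\end{align*}
where $(\eta^L,\eta^N)$ come from $-P^\top \Omega(K)\,E$ plus controlled defects. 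Solvability of the second equation in $\Anal(\TT^d_{\rho-\delta})$ requires $\langle \eta^N\rangle = 0$; this compatibility follows from the Poisson-commutativity \eqref{eq:Poisson:H:p}--\eqref{eq:Poisson:p:p} paired against $E$. Hypothesis $H_4$ then determines the otherwise-free average $\langle \xi^N\rangle$ from the solvability of the first equation via $\langle T\rangle\,\langle \xi^N\rangle = -\langle\eta^L\rangle$, while $\langle \xi^L\rangle$ is pinned down by the phase-fixing and family-fixing normalisations of Section \ref{ssec:univocal}. Each small-divisor inversion costs a R\"ussmann factor $\gamma^{-1}\delta^{-\tau}$, so altogether $\|\Delta K\|_{\rho-\delta} \leq C\,\gamma^{-2}\,\delta^{-2\tau}\,\|E\|_\rho$.

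Substituting back and using the Taylor remainder gives the one-step quadratic bound
\[
\|E_{\mathrm{new}}\|_{\rho-\delta} \leq C\,\gamma^{-4}\,\delta^{-4\tau}\,\|E\|_\rho^{\,2},
\]
which is precisely the scaling that makes \eqref{eq:KAM:HYP} the right smallness threshold. The scheme is then iterated on nested strips $\rho_s$ with $\delta_s = (\rho-\rho_\infty)\,2^{-s-1}$; the lemmas of Section \ref{sec:lemmas} guarantee that $L$, $B$, $\langle T\rangle^{-1}$, and the inclusion $K_s(\TT^d_{\rho_s}) \subset \B$ persist along the iteration with uniformly controlled constants. Quadratic convergence defeats the accumulated factor $\delta_s^{-4\tau}$ and yields $K_\infty \in \Anal(\TT^d_{\rho_\infty})$. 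Summing the corrections geometrically gives the first inequality in \eqref{eq:close}, while the second follows from $|\langle c\circ K_\infty\rangle - \langle c\circ K\rangle| \leq \|\Dif c\|_\B\,\|K_\infty - K\|_{\rho_\infty}$ applied to the telescoping series.

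The main obstacle I expect is making the \emph{approximate reducibility} step quantitatively sharp. When $E\neq 0$, $L$ is only approximately Lagrangian and $P$ only approximately $\Omega_0$-symplectic, so every identity used to triangularise the linearised operator picks up a defect that has to be bounded in terms of $\|E\|_\rho$ and the geometric constants of $H_1$; working these bounds out in parallel for \textbf{Case II} and \textbf{Case III} is what ultimately fixes $\mathfrak{C}_1$. A second subtlety, specific to the partially integrable setting, is that $\langle T\rangle$ now mixes the Kolmogorov twist along the $d$ torus directions with the sensitivity of the family $\Phi_s$ to the first integrals, so one must ensure that the Newton correction does not accidentally drift along the symmetry directions $X_{p_j}\circ K$ in a way that would degrade $\|B\|$ or $\|\langle T\rangle^{-1}\|$; this is precisely what the normalisations of Section \ref{ssec:univocal} are designed to prevent. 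The explicit values of $\mathfrak{C}_1,\mathfrak{C}_2,\mathfrak{C}_3$ are then obtained by careful bookkeeping through the tables of Appendix \ref{ssec:consts}.
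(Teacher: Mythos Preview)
Your outline is essentially the paper's proof: quasi-Newton correction $\Delta K=P\xi$ in the adapted frame, approximate triangularisation via the lemmas of Section~\ref{sec:lemmas}, two successive cohomological equations solved by R\"ussmann estimates, quadratic error reduction with loss $\gamma^{-4}\delta^{-4\tau}$, and iteration on shrinking strips. Two small corrections are worth noting. First, the paper does \emph{not} use the normalisations of Section~\ref{ssec:univocal} to fix $\langle\xi^L\rangle$ or to control drift along $X_{p_j}$; it simply takes $\xi_0^L=0_n$ (see the discussion following \eqref{eq:eta:corr} and Remark~\ref{rem:unicity}), and the convergence argument is indifferent to which member of the $(n{-}d)$-parameter family is obtained. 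Second, each iteration actually costs $3\delta$ of strip (the Iterative Lemma~\ref{lem:KAM:inter:integral} outputs objects on $\TT^d_{\rho-3\delta}$), and the paper uses a geometric sequence $\delta_s=\delta_0/a_1^s$ with free parameters $a_1,a_2$ rather than the dyadic $\delta_s=(\rho-\rho_\infty)2^{-s-1}$; this extra flexibility is what allows the explicit optimisation of $\mathfrak{C}_1$ in Appendix~\ref{ssec:consts}.
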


\begin{remark}
If $d=n$ then there are no additional first integrals
and we recover the classical KAM theorem for Lagrangian tori.
The corresponding estimates follow by taking zero the constants
$\cteDp=0$,
$\cteDpT=0$,
$\cteXp=0$,
$\cteXpT=0$,
$\cteDXp=0$,
$\cteDXpT=0$,
$\cteDDXp=0$, and
$\cteDDXpT=0$.
Thus, as a
by-product, we obtain optimal quantitative estimates for the
KAM theorem for flows stated in \cite{GonzalezJLV05}.
\end{remark}

\begin{remark}
In the \emph{canonical case}, we have $\Omega=\tilde \Omega=\Omega_0$, $G = I_{2n}$,
and $J=\Omega_0$. Hence, we have 
$\cteOmega=1$,
$\cteDOmega=0$, 
$\ctetOmega=1$, 
$\cteDtOmega=0$,
$\cteDDtOmega=0$, 
$\cteG=1$,
$\cteDG=0$, 
$\cteDDG=0$,
$\cteJ=1$,
$\cteDJ=0$,
$\cteDDJ=0$,
$\cteJT=1$, and
$\cteDJT=0$.
\end{remark}

\begin{remark}
Notice that the condition $d\geq 2$ is optimal. For $d=1$, not only the torus becomes
a periodic orbit (and the result would follow from an standard implicit theorem without
small divisors) but also $X_\H$ is completely integrable.
\end{remark}

\begin{remark}\label{rem:unicity}
The existence of a $d$-dimensional invariant torus with frequency $\omega$
implies the existence of a $(n-d)$-parameter family of invariant tori with
frequency $\omega$. The family is locally unique, meaning that if there
is an invariant torus with frequency $\omega$ close enough to the family,
then it is a member of the family. Notice also that Theorem~\ref{theo:KAM}
states the existence of the parameterization of an invariant torus, but
that we can also change the phase to obtain a new parameterization. As
mentioned in Section~\ref{ssec:univocal}, both indeterminacies (the phase
and the element of the family) could be fixed by adding $n$ extra scalar
equations to the invariance equation. 
\end{remark}

\subsection{Generalized iso-energetic KAM theorem}\label{ssec:theo:iso}

Let us consider the setting presented in Section \ref{ssec:theo},
and let us focus on the first integral $c : \mani \rightarrow \RR$
that commutes with $\H$ and $p=(p_1,\ldots,p_{n-d})$.
It is clear that if $\torus=K(\TT^d)$ is invariant under $X_h$
then $\torus$ is contained in the hypersurface $c(z)=c_0$, for $c_0\in \RR$.
In this section we are interested in finding an invariant torus
by pre-fixing such hypersurface, that is, our aim is to obtain
a parameterization satisfying
\begin{equation}\label{eq:inv:iso}
X_\H(K(\theta))=\Lie{\omega}K(\theta)\,,
\qquad
\aver{c \circ K} = c_0 \,,
\end{equation}
where $c_0\in \RR$ is fixed and we think in $\omega \in \PP \RR^d$.

The following result, which is an extension of Theorem \ref{theo:KAM} to this
generalized iso-energetic context, establishes quantitative sufficient
conditions for the existence of a solution of \eqref{eq:inv:iso} close
to an approximate one. For this reason,
we refer the reader
to Section \ref{ssec:theo} for a compendium of
the objects involved in the result and we do not
estate the common hypotheses.

\begin{theorem}[Iso-energetic KAM theorem with first
integrals]\label{theo:KAM:iso}
Let us consider the setting of Theorem \ref{theo:KAM}, 
assume that the hypotheses $H_2$ and $H_3$ hold,
and replace $H_1$, $H_4$ and $H_5$ by
\begin{itemize}
\item [$H_1'$] Assume that all estimates in $H_1$ hold. In addition,
for the first integral $c: \B \to \CC$ we have
\[
\norm{\Dif^2 c}_{\B} \leq \cteDDc \,.
\]
\item [$H_4'$] There exists a constant $\sigmaTc$ such that
\[
\abs{\aver{T_c}^{-1}} < \sigmaTc\,,
\]
where
$T_c:\TT^d_\rho \to
\CC^{(n+1)\times(n+1)}$ 
is the extended torsion matrix
\begin{equation}\label{eq:Tc}
T_c(\theta) := 
\begin{pmatrix} 
T(\theta) & \homega \\
\Dif c(K(\theta)) N(\theta) & 0 
\end{pmatrix}, 
\qquad
\homega := 
\begin{pmatrix} \omega  \\ 0_{n-d} \end{pmatrix} \,.
\end{equation}

\item [$H_5'$] Let us consider a constant $\sigmao>1$ and a frequency
vector $\omega_*$ in the set $\Dioph{\gamma}{\tau}$, given by \eqref{eq:def:Dioph},
for certain $\gamma >0$ and $\tau \geq d-1$.
Then, we assume that $\omega \in \RR^d$ is contained in the ray
\[
\Theta=\Theta(\omega_*,\sigmao)=\{ s \omega_* \in \RR^d\,:\, 1<s<\sigmao\} \subset
\Dioph{\gamma}{\tau} \,.
\]
Notice that, by definition, we have $\dist (\omega,\pd \Theta)>0$.
\end{itemize}
Under the above hypotheses, for each $0<\rho_\infty<\rho$
there exists a constant $\mathfrak{C}_1$ such that, if
the total error
\begin{equation}\label{eq:invE:iso}
E_c(\theta)=
\begin{pmatrix}
E(\theta) \\
E^\omega
\end{pmatrix}
= 
\begin{pmatrix}
X_\H(K(\theta))-\Dif K(\theta) \omega \\
\aver{c \circ K} - c_0
\end{pmatrix}
\end{equation}
satisfies
\begin{equation}\label{eq:KAM:HYP:iso}
\frac{\mathfrak{C}_1 \norm{E_c}_\rho}{\gamma^4 \rho^{4 \tau}} < 1\,,
\end{equation}
then there exists an invariant torus
$\torus_\infty = K_\infty(\TT^d)$ with frequency $\omega_\infty \in \Theta$, satisfying
$K_\infty \in \Anal(\TT^{d}_{\rho_\infty})$ and
\[
\norm{\Dif K_\infty}_{\rho_\infty} < \sigmaDK\,,
\qquad
\norm{(\Dif K_\infty)^\top}_{\rho_\infty} < \sigmaDKT\,,
\qquad
\dist(K_\infty(\TT^d_{\rho_\infty}),\pd \B) > 0\,.
\]
Furthermore, the objects are close to the initial ones: there exist constants 
$\mathfrak{C}_2$ and 
$\mathfrak{C}_3$ 
such that
\begin{equation}\label{eq:close:iso}
\norm{K_\infty - K}_{\rho_\infty} < \frac{\mathfrak{C}_2 \norm{E_c}_\rho}{\gamma^2 \rho^{2\tau}}\,,
\qquad
\abs{\omega_\infty - \omega} < \frac{\mathfrak{C}_3 \norm{E_c}_\rho}{\gamma^2 \rho^{2\tau}}\,.
\end{equation}
The constants $\mathfrak{C}_1$, $\mathfrak{C}_2$ and $\mathfrak{C}_3$ are
given explicitly in 
Appendix \ref{ssec:consts}.
\end{theorem}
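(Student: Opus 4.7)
The plan is to adapt the quasi-Newton scheme underlying Theorem \ref{theo:KAM} so that at each step I correct both the parameterisation $K$ and the modulus of the frequency along the ray $\Theta$. Given an approximate pair $(K,\omega)$ with total error $E_c=(E,E^\omega)^\top$ as in \eqref{eq:invE:iso}, I look for a correction $(\Delta K,\Delta\omega)$ of the form $\Delta\omega=\alpha\,\omega$ with scalar $\alpha\in\RR$---so that the updated frequency stays on the ray $\Theta$---such that the linearisations
\begin{equation*}
\Dif X_\H(K(\theta))\Delta K(\theta)-\Dif\Delta K(\theta)\,\omega-\Dif K(\theta)\,\Delta\omega=-E(\theta),\qquad \aver{\Dif c(K)\,\Delta K}=-E^\omega,
\end{equation*}
are solved modulo remainders quadratic in $\norm{E_c}_\rho$.

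Writing $\Delta K(\theta)=P(\theta)\xi(\theta)$ in the geometrically adapted symplectic frame $P=(L,N)$ from Section \ref{ssec:sym:frame} and splitting $\xi=(\xi^L,\xi^N)^\top$, the approximate reducibility established by the geometric lemmas of Section \ref{sec:lemmas} reduces the invariance equation to the upper-triangular cohomological system
\begin{align*}
-\Dif\xi^N(\theta)\,\omega &= \eta^N(\theta),\\
T(\theta)\xi^N(\theta)-\Dif\xi^L(\theta)\,\omega &= \eta^L(\theta)+\alpha\,\homega,
\end{align*}
modulo terms of order $\norm{E_c}_\rho^2$, where $\eta=P^\top\Omega(K)E$ encodes the error in the adapted frame and the shape of $\homega$ in \eqref{eq:Tc} reflects that $\Dif K\,\Delta\omega$ lives only in the first $d$ columns of the tangent block $L$. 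The first equation is solved by standard small-divisor estimates, determining $\xi^N$ up to its average $\aver{\xi^N}$; substituting into the second and averaging yields the solvability condition $\aver{T}\aver{\xi^N}-\alpha\,\homega=\text{(data)}+O(\norm{E_c}_\rho^2)$. The linearised iso-constraint simplifies to $\aver{\Dif c(K)\,N}\,\aver{\xi^N}=-E^\omega+O(\norm{E_c}_\rho^2)$, because the involutions $\{c,\H\}=0$ and $\{c,p_i\}=0$ force $\Dif c(K)\,L$ to vanish (exactly on the $X_p$-block, approximately on $\Dif K$). The two conditions assemble precisely into the square system
\begin{equation*}
\aver{T_c}\begin{pmatrix}\aver{\xi^N}\\ \alpha\end{pmatrix}=\text{(known right-hand side of size }\norm{E_c}_\rho\text{)},
\end{equation*}
solvable by hypothesis $H_4'$. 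Fixing the phase by $\aver{\xi^L}=0$ then determines $\alpha$, the remaining $\xi^L$, and hence $\Delta K=P\xi$ completely.

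Quadratic convergence on a sequence of shrinking strips $\rho_m\downarrow\rho_\infty$ follows in the same style as in the ordinary case: the lemmas of Section \ref{sec:lemmas} show that the approximate Lagrangian and symplectic properties of $(L,N,P)$ degrade only linearly in $\norm{E_c}_\rho$, giving Newton estimates of the form $\norm{E_c^{(m+1)}}_{\rho_{m+1}}\leq C(\rho_m-\rho_{m+1})^{-2\tau}\gamma^{-2}\norm{E_c^{(m)}}_{\rho_m}^2$, which telescope into the smallness threshold \eqref{eq:KAM:HYP:iso} and the closeness estimates \eqref{eq:close:iso}. The main obstacle compared with Theorem \ref{theo:KAM} is twofold. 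First, the Newton step couples $\aver{\xi^N}$ with the scalar frequency correction $\alpha$ through $\aver{T_c}$ rather than $\aver{T}$ alone, so the approximate-reducibility estimates must now also track $\Dif c(K)\,N$ and its Lie derivative along $\omega$, which explains the extra assumption $\norm{\Dif^2 c}_\B\leq\cteDDc$ in $H_1'$. Second, I must verify that every iterate $\omega_m$ remains in $\Theta$; since each correction $\Delta\omega_m=\alpha_m\,\omega_m$ is parallel to $\omega_*$, the ray structure is preserved algebraically, while summability of $|\alpha_m|$ from quadratic convergence, combined with $\sigmao>1$ and the threshold \eqref{eq:KAM:HYP:iso}, keeps $\omega_\infty\in\Theta\subset\Dioph{\gamma}{\tau}$, so that Diophantine small divisors can be controlled uniformly throughout the scheme.
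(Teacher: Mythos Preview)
Your proposal is correct and matches the paper's approach: a parallel frequency correction $\Delta\omega=\alpha\,\omega$, passage to the symplectic frame $P$, reduction to the extended upper-triangular cohomological system solved via $\aver{T_c}^{-1}$ (the paper's Lemma~\ref{lem:upperT:iso}), and quadratic convergence on shrinking strips while keeping every $\omega_m\in\Theta$. Two minor inaccuracies worth flagging (neither derails the scheme): the paper's quadratic contraction rate is $\gamma^{-4}\delta^{-4\tau}$ rather than $\gamma^{-2}\delta^{-2\tau}$, since two small-divisor losses compound; and in the iso-constraint, the term $\aver{(\Dif c\circ K)N\,\R_\omega\eta^N}$ enters the right-hand side of the $\aver{T_c}$ system at order $\norm{E_c}_\rho$, not $\norm{E_c}_\rho^2$, so it must be kept as data rather than discarded as a quadratic remainder.
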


\begin{remark}
If we consider the case $c(z)=h(z)$ then we recover the classical iso-energetic situation.
Notice that, if $\torus$ is invariant with frequency $\omega$, then the frame $P(\theta)$ is 
symplectic, and 
\[
\begin{split}
	\Dif h(K(\theta)) N(\theta) & = X_h(K(\theta))^\top \Omega(K(\theta)) N(\theta)= 
	-\omega^\top \Dif K(\theta)^\top \Omega(K(\theta)) N(\theta) \\ & = 
	\begin{pmatrix} \omega^\top  & 0_{n-d}^\top \end{pmatrix}= {\hat \omega}^\top.
\end{split}
\]
Hence, the extended torsion matrix for an invariant torus is
\begin{equation}
\label{def:T_isoenergetic}
T_h(\theta) := 
\begin{pmatrix} 
T(\theta) & \homega \\
\homega^\top & 0 
\end{pmatrix}\,.
\end{equation}
\end{remark}

\begin{remark}
If we consider the case $c(z)=p_j(z)$ then we have an iso-momentum situation.
In this case, if $\torus$ is invariant with frequency $\omega$, then 
\[
	\Dif p_j(K(\theta)) N(\theta)= -X_{p_j}(K(\theta))^\top \Omega(K(\theta)) N(\theta) 
	 =  e_{d+j}^\top\, ,
\]
where $e_{d+j}$ is the $(d+j)$-th canonic vector of $\RR^n$ (it has $1$ in the $(d+j)$-th component and
$0$ elsewhere).
Hence, the extended torsion matrix for an invariant torus is
\begin{equation}
\label{def:T_isomomentum}
T_{p_j}(\theta) := 
\begin{pmatrix} 
T(\theta) & \homega \\
e_{d+j}^\top & 0 
\end{pmatrix} \,.
\end{equation}
\end{remark}

\section{Some lemmas to control approximate geometric properties}
\label{sec:lemmas}

In this section we present some estimates regarding the control of
some geometric properties for an approximately invariant torus. For
the sake of clarity, we reduce the repetition of hypotheses and present
a unique setting for the whole section, consisting in the
assumptions of the KAM Theorems in Section \ref{ssec:theo} and \ref{ssec:theo:iso}.

\subsection{Estimates for cohomological equations}
Let us first introduce some useful notation regarding the so-called
\emph{cohomological equations}
that play an important role in KAM theory.
Given $\omega \in \RR^d$ and a periodic function $v$, we consider the cohomological equation
\begin{equation}\label{eq:calL}
\Lie{\omega} u = v- \aver{v}\,, \qquad
\Lie{\omega} := -\sum_{i=1}^d \omega_i \frac{\pd}{\pd \theta_i}.
\end{equation}
The notation $\Lie{}$ comes from ``left-operator''.

Let us assume that $v$ is continuous and $\omega$ is rationally independent (this implies that the
flow $t \mapsto \omega t$ is quasi-periodic).
If there exists a continuous zero-average solution of equation \eqref{eq:calL},
then it is unique and will be denoted by
$u = \R{\omega} v$.
The notation $\R{}$ comes from ``right-operator''.

Note that the formal solution of equation \eqref{eq:calL} is
immediate. Actually, if $v$ has the Fourier
expansion $v(\theta)=\sum_{k \in \ZZ^d} \hat v_k \ee^{2\pi
\ii k \cdot \theta }$ and the dynamics is quasi-periodic, then
\begin{equation}\label{eq:small:formal}
\R{\omega} v(\theta) = \sum_{k \in \ZZ^d \backslash \{0\} } \hat u_k \ee^{2\pi
\ii k \cdot  \theta}, \qquad \hat u_k = \frac{-\hat
v_k}{2\pi \ii  k \cdot \omega}.
\end{equation}
In particular, this implies that $\R{\omega} v =0$ if $v=0$.
The solutions of equation \eqref{eq:calL} differ by 
their average.

We point out that quasi-periodicity is not enough to ensure regularity of the
solutions of cohomological equations.  This is related to the effect of the
small divisors $k \cdot \omega$ in
equation \eqref{eq:small:formal}.
To deal with regularity, we
require stronger non-resonant conditions on the vector of frequencies.
In this paper, we consider the classic Diophantine conditions in $H_5$ and $H_5'$.

\begin{lemma}[R\"ussmann estimates]\label{lem:Russmann}
Let 
$\omega \in \Dioph{\gamma}{\tau}$
for some $\gamma>0$ and $\tau \geq d-1$.
Then, for any $v \in \Anal(\TT^d_\rho)$, with $\rho>0$,
there exists a unique zero-average solution
of $\Lie{\omega} u = v -\aver{v}$,
denoted by $u=\R{\omega}v$. Moreover, for any $0<\delta<\rho$ we
have that $u \in \Anal(\TT^d_{\rho-\delta})$ and the estimate
\[
\norm{u}_{\rho-\delta} \leq \frac{c_R}{\gamma \delta^\tau}
\norm{v}_\rho\,,
\]
where $c_R$ is a constant that depends on $d$, $\tau$ and possibly on $\delta$.
\end{lemma}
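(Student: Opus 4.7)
The plan is to follow a direct Fourier-analytic argument, which is standard in KAM theory and in the spirit of R\"ussmann's original approach. Uniqueness of a zero-average continuous solution is immediate: the difference of two solutions would lie in the kernel of $\Lie{\omega}$ acting on mean-zero continuous functions, and by the nonresonance of $\omega$ (which follows from the Diophantine condition $H_5$), this kernel is trivial.

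For existence and the quantitative bound, I would expand $v(\theta)=\sum_{k\in\ZZ^d}\hat v_k \ee^{2\pi \ii k\cdot\theta}$ and define
\[
u(\theta) = \sum_{k\in\ZZ^d\backslash\{0\}} \frac{-\hat v_k}{2\pi\ii\, k\cdot\omega}\, \ee^{2\pi\ii k\cdot\theta},
\]
which is the formal candidate for $\R{\omega}v$ suggested by \eqref{eq:small:formal}. Analyticity of $v$ on $\TT^d_\rho$ yields the standard decay $|\hat v_k|\leq \norm{v}_\rho \ee^{-2\pi |k|_1 \rho}$, while the Diophantine bound gives $|k\cdot\omega|\geq \gamma/|k|_1^\tau$. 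Combining these two inequalities furnishes
\[
|\hat u_k| \ee^{2\pi|k|_1(\rho-\delta)} \;\leq\; \frac{|k|_1^\tau}{2\pi\gamma}\,\norm{v}_\rho\, \ee^{-2\pi|k|_1\delta},
\]
so that absolute convergence of the Fourier series in $\TT^d_{\rho-\delta}$ (hence real-analyticity there, plus the property that $\hat v_{-k}=\overline{\hat v_k}$ is inherited by $\hat u_k$, ensuring real-valuedness on $\TT^d$) reduces the problem to estimating the number
\[
S(\delta) \;:=\; \sum_{k\in\ZZ^d\backslash\{0\}} |k|_1^\tau \ee^{-2\pi|k|_1\delta}.
\]

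The main obstacle, and the one place the proof requires care, is controlling $S(\delta)$ so as to match the form $c_R/\delta^\tau$ stated in the lemma. A quick route, exploiting that $c_R$ is permitted to depend on $\delta$, is to bound $S(\delta)$ by comparison with the integral $\int_{\RR^d}|x|_1^\tau \ee^{-2\pi|x|_1\delta}\,\dif x$, which after rescaling yields a bound of order $\delta^{-\tau-d}$; absorbing the extra $\delta^{-d}$ into $c_R$ then gives $\norm{u}_{\rho-\delta}\leq c_R \norm{v}_\rho/(\gamma\delta^\tau)$. If instead one wanted $c_R$ to be independent of $\delta$ (the sharper R\"ussmann form used in HaroCFLM16), I would group the Fourier modes in dyadic shells $2^j\leq |k|_1<2^{j+1}$, apply the Cauchy--Schwarz inequality on each shell together with the analytic bound on $\norm{v}_\rho$, and then sum the resulting geometric series; the exponential factor dominates the polynomial $|k|_1^\tau$ and the shell-by-shell estimate sums to a constant times $\delta^{-\tau}$ uniformly in $\delta\in(0,\rho)$. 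Either way, the argument is almost mechanical once one has agreed on the level of sharpness; the real technical subtlety is only in the combinatorial bookkeeping of $S(\delta)$, not in any KAM-specific idea.
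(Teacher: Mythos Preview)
The paper does not actually prove this lemma: its ``proof'' consists solely of citations to R\"ussmann's original papers and to \cite{FiguerasHL17} for sharper computer-assisted variants. Your proposal therefore goes further than the paper does, and your ``quick route'' is a correct and complete argument for the lemma \emph{as stated}, since the statement explicitly allows $c_R$ to depend on $\delta$; the crude bound $S(\delta)\lesssim \delta^{-\tau-d}$ then suffices.

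One caution about your optional ``sharper route'' for a $\delta$-independent $c_R$: dyadic grouping plus the pointwise decay $|\hat v_k|\leq \norm{v}_\rho e^{-2\pi|k|_1\rho}$ alone still produces a factor $\delta^{-\tau-d}$, because each shell at scale $2^j$ contains $\sim 2^{jd}$ lattice points. R\"ussmann's actual argument needs an $\ell^2$ (Parseval/Bessel) control of the Fourier coefficients of $v$ on a slightly smaller strip, combined with Cauchy--Schwarz, to kill the extra $\delta^{-d}$. Your sketch mentions Cauchy--Schwarz but pairs it with ``the analytic bound on $\norm{v}_\rho$,'' which is not quite enough; the missing ingredient is the $L^2$-type estimate. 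This does not affect the validity of your proof of the lemma as written, only the bonus sharpening.
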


\begin{proof}
There is no need to reproduce here this classical result, and we refer
the reader to the original references \cite{Russmann75,Russmann76a},
where a uniform bound (independent of $\delta$) is obtained. We refer
to \cite{FiguerasHL17} for sharp non-uniform computer-assisted
estimates (in the discrete case) of the form $c_R=c_R(\delta)$.
representing a substantial
advantage in order to apply the result to particular problems. Adapting these
estimates to the continuous case is straightforward. Also, we refer to \cite{FiguerasHL18}
for a numerical quantification of these estimates and for an analysis of
the different sources of overestimation.
\end{proof}

\subsection{Approximate conserved quantities}\label{ssec:app:conserved}

If $\torus=K(\TT^d)$ is not an invariant torus with frequency $\omega$, it is clear that
a first integral in involution $c$ (such as the energy $\H$ or any of the components of $p$) is
not necessarily preserved along
$z(t) = K(\theta_0 + \omega t)$, since this is not a true trajectory. 
However, we can ``shadow''
its evolution in terms of the error of invariance.

\begin{lemma}\label{lem:cons:H:p}
Let us consider the setting of Theorem \ref{theo:KAM} or
Theorem \ref{theo:KAM:iso}. Then, for a conserved quantity $c$ 
the following estimates hold: 
\begin{align}
&
\norm{c \circ K - \aver{c \circ K}}_{\rho-\delta} \leq \frac{c_R \cteDc}{\gamma \delta^\tau}\norm{E}_\rho \,,
&&  
\label{eq:c-avgc} \\
&
\norm{\Lie{\omega}(\Dif (c \circ K))}_{\rho-\delta} \leq \frac{ d \cteDc}{\delta}\norm{E}_\rho\,, 
&& 
\norm{\Lie{\omega}(\Dif (c \circ K))^\top}_{\rho-\delta} \leq \frac{ \cteDc}{\delta}\norm{E}_\rho\,,
\label{eq:LDc} \\
&
\norm{\Dif(c \circ K) }_{\rho-2\delta} \leq \frac{c_R d \cteDc }{\gamma \delta^{\tau+1}}\norm{E}_\rho\,, 
&&
\norm{(\Dif(c \circ K))^\top }_{\rho-2\delta} \leq \frac{c_R \cteDc }{\gamma \delta^{\tau+1}}\norm{E}_\rho\,.
\label{eq:Dc}
\end{align}
In particular, 
\begin{align}
&
\norm{p \circ K - \aver{p \circ K}}_{\rho-\delta} \leq \frac{c_R \cteDp}{\gamma \delta^\tau}\norm{E}_\rho\,,
&&  
\label{eq:p-avgp} \\
&
\norm{\Lie{\omega}(\Dif (p \circ K))}_{\rho-\delta} \leq \frac{ d \cteDp}{\delta}\norm{E}_\rho \,,
&& 
\norm{\Lie{\omega}(\Dif (p \circ K))^\top}_{\rho-\delta} \leq \frac{ \cteDpT}{\delta}\norm{E}_\rho \,,
\label{eq:LDp} \\
&
\norm{\Dif(p \circ K) }_{\rho-2\delta} \leq \frac{ c_R d \cteDp }{\gamma \delta^{\tau+1}}\norm{E}_\rho\,,
&&
\norm{(\Dif(p \circ K))^\top }_{\rho-2\delta} \leq \frac{c_R \cteDpT }{\gamma \delta^{\tau+1}}\norm{E}_\rho\,.
\label{eq:Dp}
\end{align}
\end{lemma}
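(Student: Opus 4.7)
The engine of the proof is the observation that, even though $K$ is only approximately invariant, the involution hypothesis $\{c,h\}=0$ produces an exact identity for the Lie derivative of $c\circ K$ in terms of the error $E$. Indeed, since $\Lie{\omega}(c\circ K)(\theta) = -\Dif c(K(\theta))\,\Dif K(\theta)\,\omega$ by the chain rule, the definition \eqref{eq:invE} of $E$ and the involution relation $\Dif c\cdot X_\H = 0$ yield
\[
\Lie{\omega}(c\circ K)(\theta) \;=\; -\Dif c(K(\theta))\bigl(X_\H(K(\theta))-E(\theta)\bigr) \;=\; \Dif c(K(\theta))\,E(\theta).
\]
The right-hand side already gives an $\rho$-norm bound $\cteDc\norm{E}_\rho$, and in particular has zero average (as the Lie derivative of a periodic function).

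From this identity the estimate \eqref{eq:c-avgc} is immediate by applying R\"ussmann's lemma (Lemma \ref{lem:Russmann}), since $c\circ K-\aver{c\circ K}=\R{\omega}(\Dif c(K)E)$. The first bound in \eqref{eq:LDc} follows by differentiating the key identity in $\theta$; because $\Dif_\theta$ and $\Lie{\omega}$ are both constant-coefficient differential operators they commute, so $\Lie{\omega}\Dif(c\circ K)=\Dif(\Dif c(K)E)$, and a direct Cauchy estimate with loss $\delta$ produces the factor $d/\delta$. The transposed version of \eqref{eq:LDc} is identical but uses the sharper Cauchy bound $\norm{(\Dif u)^\top}_{\rho-\delta}\le \delta^{-1}\norm{u}_\rho$ valid for scalar $u=\Dif c(K)E$. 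Finally, \eqref{eq:Dc} is obtained by telescoping: since $\aver{c\circ K}$ is constant, $\Dif(c\circ K)=\Dif(c\circ K-\aver{c\circ K})$, and another Cauchy estimate with loss $\delta$ applied to \eqref{eq:c-avgc} gives the $\gamma\delta^{\tau+1}$ denominator; the transposed case again uses the better Cauchy constant for scalar-valued data.

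For the $p$-version \eqref{eq:p-avgp}--\eqref{eq:Dp}, the non-transposed inequalities follow by applying the previous argument to each component $p_j$ and then taking the maximum over $j$, since $\norm{\Dif p}_{\B}=\max_j\sum_\ell\norm{\pd_\ell p_j}_\B=\cteDp$ controls $\norm{\Dif p(K)E}_\rho$. The delicate point is the transposed estimates, where the bounds read $\cteDpT$ rather than the weaker $(n-d)\cteDp$ one would obtain from a careless use of $\norm{(\Dif w)^\top}_{\rho-\delta}\le (n-d)\delta^{-1}\norm{w}_\rho$. The sharper route is to apply R\"ussmann componentwise to each scalar $\Lie{\omega}(p_j\circ K)=\Dif p_j(K)E$, bound $\norm{\Dif p_j(K)E}_\rho\le \norm{\Dif p_j}_\B\norm{E}_\rho$, and then sum over $j$, using the identity $\sum_{j}\norm{\Dif p_j}_\B=\cteDpT$ that follows directly from the paper's definition of $\norm{\Dif p^\top}_\B$.

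The only subtle point is thus the bookkeeping just described: recognising that summing R\"ussmann bounds over components naturally produces $\cteDpT$ rather than $(n-d)\cteDp$ on the transposed side, and analogously for the Cauchy estimates on derivatives. No small-divisor difficulty beyond Lemma~\ref{lem:Russmann} is encountered, so the proof reduces to a careful catalogue of norm inequalities with the optimal constants.
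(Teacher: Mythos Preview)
Your proof is correct and follows essentially the same route as the paper: the key identity $\Lie{\omega}(c\circ K)=\Dif c(K)\,E$ from the involution hypothesis, followed by R\"ussmann's estimate for \eqref{eq:c-avgc}, Cauchy estimates on the identity for \eqref{eq:LDc}, and Cauchy on \eqref{eq:c-avgc} for \eqref{eq:Dc}. Your componentwise treatment of the $p$-estimates, including the observation that $\cteDpT=\sum_j\norm{\Dif p_j}_\B$ is what produces the sharper constant on the transposed side, is exactly the paper's argument.
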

\begin{proof}
We will prove the result first for the conserved quantity $c$.
This case includes analogous estimates for each of the 
first integrals $h$ and
$p_i$, 
with
$i= 1,\dots, n-d$.
We apply $\Lie{\omega}(\cdot)=-\Dif (\cdot) \omega$ in the expression $c\circ K$, thus obtaining
\begin{equation}
\label{eq:LcK}
\begin{split}
\Lie{\omega}(c(K(\theta))) = {} & \Dif c(K(\theta)) \Lie{\omega} K(\theta) \\
= {} &
\Dif c(K(\theta))
\left(
E(\theta)
-X_\H(K(\theta))
\right) \\
= {} & \Dif c(K(\theta)) E(\theta)\,.
\end{split}
\end{equation}
In the second line we used the expression of the error of invariance \eqref{eq:invE}, and
in the third line we used that $c$ is in involution with $\H$. In particular, 
\[
	\Norm{\Lie{\omega}(c\circ K))}_\rho \leq \cteDc \norm{E}_\rho,
\]
where we use that $\norm{\Dif c}_\B\leq \cteDc$.
Thus, we end up with
\[
c(K(\theta))-\aver{c \circ K} = \R{\omega}(\Dif c(K(\theta)) E(\theta))\, ,
\]
and the estimate \eqref{eq:c-avgc} follows applying Lemma \ref{lem:Russmann}. 

In order to prove \eqref{eq:LDc} and \eqref{eq:Dc} we just differentiate
with respect to $\theta_\ell$, for $\ell= 1,\dots, d$, both formulae
\eqref{eq:LcK} and \eqref{eq:LDc} and apply Cauchy estimates. 
Firstly,
\[
\Norm{\frac{\partial}{\partial \theta_\ell}\Lie{\omega}(c \circ K) }_{\rho-\delta} \leq {} 
\frac{\cteDc }{\delta}\norm{E}_\rho, 
\]
so estimates in \eqref{eq:LDc} follow immediately.
Secondly,
\[
\Norm{\frac{\partial}{\partial \theta_\ell}(c \circ K) }_{\rho-2\delta} \leq {} 
\frac{c_R \cteDc }{\gamma \delta^{\tau+1}}\norm{E}_\rho, 
\]
and then estimates in \eqref{eq:Dc} follow.

In order to prove \eqref{eq:p-avgp}, \eqref{eq:LDp} and \eqref{eq:Dp} we just notice that \eqref{eq:c-avgc}, \eqref{eq:LDc} and \eqref{eq:Dc}
work for any of the first integrals $p_i$, for $i= 1,\dots, n-d$. 
Then, we
change 
the occurrences of $c$ by $p_i$ in the formulae,
with $\norm{\Dif p_i}_\B\leq c_{p_i,1}$, and 
use that 
\[
	\cteDp= \max_{i= 1,\dots,n-d} c_{p_i,1}, \qquad \cteDpT= \sum_{i= 1,\dots,n-d} c_{p_i,1}
\]
to obtain the bounds.
\end{proof}

\subsection{Approximate isotropicity of tangent vectors}

In this section we prove that if $\torus$ is approximately
invariant, then $K^*\sform$ is small and can be controlled by
the error of invariance. We refer the reader to \cite{FontichLS09,GonzalezHL13}
for similar computations, using generic constants in the estimates.

\begin{lemma}\label{lem:isotrop}
Let us consider the setting of 
Theorem \ref{theo:KAM} or
Theorem \ref{theo:KAM:iso}. 
Let us consider $\Omega_K:\TT^d \to \RR^{2n \times 2n}$, the matrix representation of
the pull-back on $\TT^d$ of the symplectic form. We have
\begin{equation}\label{eq:OmegaK:aver}
\aver{\Omega_K} = O_d\,,
\end{equation}
and
the following estimate holds:
\begin{equation}\label{eq:estOmegaK}
\norm{\Omega_K}_{\rho-2\delta} \leq \frac{\COmegaK }{\gamma \delta^{\tau+1}}\norm{E}_\rho\,,
\end{equation}
where the constant $\COmegaK$ is provided in Table \ref{tab:constants:all}.
\end{lemma}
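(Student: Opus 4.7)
The plan is to establish (\ref{eq:OmegaK:aver}) by appealing to the exactness of $\sform$, and to establish (\ref{eq:estOmegaK}) by computing $\Lie{\omega}\Omega_K$ explicitly, showing that it vanishes identically on true invariant tori thanks to the Hamiltonian character of $X_\H$, and recovering $\Omega_K$ itself from its Lie derivative via R\"ussmann's Lemma~\ref{lem:Russmann}.

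First, since $\sform=\dif\aform$, the pullback satisfies $K^*\sform=K^*\dif\aform=\dif(K^*\aform)$, so $\Omega_K$ is an exact 2-form on $\TT^d$. Componentwise, each entry $(\Omega_K)_{ij}$ is a linear combination of partial derivatives of components of $K^*\aform$, which are $1$-periodic; averaging over $\TT^d$ yields $\aver{\Omega_K}=O_d$, proving (\ref{eq:OmegaK:aver}).

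Second, I would differentiate the defining expression (\ref{def-OK}) along $\omega$. Using $\Lie{\omega}K=-\Dif K\,\omega = E - X_\H\circ K$ (from (\ref{eq:invE})), one obtains
\[
\Lie{\omega}(\Dif K) = \Dif E - \Dif X_\H(K)\,\Dif K,\qquad
\Lie{\omega}(\Omega\circ K) = \Dif\Omega(K)[E - X_\H(K)],
\]
so that, by the Leibniz rule, $\Lie{\omega}\Omega_K$ decomposes into an ``$E$-part'' plus an ``$X_\H$-part''. The key observation is that the $X_\H$-part equals
\[
-\Dif K^\top\bigl[\Dif X_\H(K)^\top\Omega(K) + \Dif\Omega(K)[X_\H(K)] + \Omega(K)\Dif X_\H(K)\bigr]\Dif K,
\]
and the bracketed matrix is precisely the coordinate expression of $\Lie{X_\H}\sform$, which vanishes because $X_\H$ is Hamiltonian. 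Hence
\[
\Lie{\omega}\Omega_K = (\Dif E)^\top\Omega(K)\Dif K + \Dif K^\top\Dif\Omega(K)[E]\,\Dif K + \Dif K^\top\Omega(K)\Dif E.
\]
Each term is estimated in $\TT^d_{\rho-\delta}$ using the global bounds $\cteOmega$, $\cteDOmega$ from $H_1$, the a~priori bounds $\sigmaDK,\sigmaDKT$ from $H_2$, and Cauchy estimates $\norm{\Dif E}_{\rho-\delta}\lesssim \norm{E}_\rho/\delta$, $\norm{(\Dif E)^\top}_{\rho-\delta}\lesssim \norm{E}_\rho/\delta$, giving a bound of the form $C\,\norm{E}_\rho/\delta$.

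Third, combining (\ref{eq:OmegaK:aver}) with this estimate, the entries of $\Omega_K$ are zero-average periodic functions whose $\Lie{\omega}$-derivatives are controlled. Componentwise, $\Omega_K=\R{\omega}(\Lie{\omega}\Omega_K)$, so Lemma~\ref{lem:Russmann} with loss $\delta$ yields
\[
\norm{\Omega_K}_{\rho-2\delta}\leq \frac{c_R}{\gamma\,\delta^\tau}\norm{\Lie{\omega}\Omega_K}_{\rho-\delta}\leq \frac{\COmegaK}{\gamma\,\delta^{\tau+1}}\norm{E}_\rho,
\]
with $\COmegaK$ the constant assembled from the previous step and $c_R$. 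The main obstacle is the algebraic verification that the $X_\H$-contribution to $\Lie{\omega}\Omega_K$ collapses via the Hamiltonian identity; the rest is bookkeeping of Cauchy constants.
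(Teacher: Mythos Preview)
Your proposal is correct and follows essentially the same route as the paper: exactness of $\sform$ gives $\aver{\Omega_K}=O_d$; then $\Lie{\omega}\Omega_K$ is expanded via Leibniz and the $X_\H$-contribution is killed by the identity $\Dif\Omega[X_\H]+(\Dif X_\H)^\top\Omega+\Omega\,\Dif X_\H=O_{2n}$, after which Cauchy estimates and R\"ussmann's lemma give the bound. The only cosmetic difference is that you invoke this identity as the coordinate form of $\Lie{X_\H}\sform=0$, whereas the paper rederives it explicitly from $\dif\sform=0$ and the symmetry of $\Dif^2\H$; the resulting formula for $\Lie{\omega}\Omega_K$ and the final constant $\COmegaK=c_R\CLieOmegaK$ are identical.
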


\begin{proof}
Property \eqref{eq:OmegaK:aver} follows directly from the exact
symplectic structure, since $K^*\sform={\rm d}( K^{*}\aform)$.
In more algebraic terms,
we have
\begin{align*}
\Omega_K(\theta) = {} & (\Dif K(\theta))^\top
\left(
(\Dif a(K(\theta)))^\top-
\Dif a(K(\theta))
\right)
\Dif K(\theta) \\
= {} & (\Dif (a(K(\theta))))^\top \Dif K(\theta)
- (\Dif K(\theta))^\top \Dif(a(K(\theta)))\,.
\end{align*}
and so,
the components of $\Omega_K(\theta)$
are 
\begin{align*}
(\Omega_K)_{i,j} (\theta)
= {} & 
\sum_{m=1}^{2n} 
\left(
\frac{\partial(a_m(K(\theta))) }{\partial \theta_i}
\frac{\partial K_m(\theta)}{\partial {\theta_j}} 
- 
\frac{\partial(a_m(K(\theta))) }{\partial {\theta_j}}
\frac{\partial K_m(\theta)}{\partial {\theta_i}}
\right) \\
= {} & 
\sum_{m=1}^{2n} 
\left(
\frac{\partial}{\partial {\theta_i}} 
\left( 
a_m(K(\theta))
\frac{\partial(
K_m(\theta))}{\partial \theta_j}
\right)
- 
\frac{\partial}{\partial {\theta_j}} 
\left( 
a_m(K(\theta))
\frac{\partial(K_m(\theta))}{\partial \theta_i}
\right)\right)
\,.
\end{align*}
Hence, the components of $\Omega_K$ are sums of derivatives of periodic functions, 
and we obtain $\aver{\Omega_K}=O_d$.

Now we will use two crucial geometric properties (following Appendix A in
\cite{GonzalezHL13}).
Using the fact that $\sform$ is closed, we first obtain
the expression
\[
 \frac{\partial \Omega_{r,s}(z)}{\partial z_t} 
+\frac{\partial \Omega_{s,t}(z)}{\partial z_r} 
+\frac{\partial \Omega_{t,r}(z)}{\partial z_s} = 0,
\]
for any triplet $(r,s,t)$.
The second property is obtained by taking derivatives
at both sides of $\Omega(z) X_\H(z)= (\Dif \H(z))^\top$, obtaining
\[
\frac{\partial^2\H}{\partial z_i \partial z_j}(z)= \sum_{m=1}^{2n}
\left(\frac{\partial \Omega_{j,m}(z)}{\partial z_i} X_m(z)
+ \Omega_{j,m}(z) \frac{\partial X_m (z)}{\partial z_i} \right), 
\]
for any $i, j$, where we use the notation $X_m= (X_\H)_m$ for the components of $X_\H$.
Hence,
\[
\begin{split}
0 = {} &  \displaystyle \frac{\partial^2\H}{\partial z_j \partial z_i}(z) - \frac{\partial^2\H}{\partial z_i \partial z_j}(z) \\
  = {} & 
\displaystyle 
 \sum_{m=1}^{2n} 
\left(\frac{\partial \Omega_{i,m}(z)}{\partial z_j} X_m(z)
+ \Omega_{i,m}(z) \frac{\partial X_m (z)}{\partial z_j} \right) \\
& - 
\sum_{m=1}^{2n} \left(
\frac{\partial \Omega_{j,m}(z)}{\partial z_i} X_m(z)
+ \Omega_{j,m}(z) \frac{\partial X_m (z)}{\partial z_i} \right)   \\
  = {} & \displaystyle 
\sum_{m=1}^{2n} \left(\frac{\partial \Omega_{i,j}(z)}{\partial z_m} X_m(z) 
+ \Omega_{i,m}(z) \frac{\partial X_m (z)}{\partial z_j}
+ \Omega_{m,j}(z) \frac{\partial X_m (z)}{\partial z_i} \right) 
\end{split} 
\]
The above expressions yield the formula
\begin{equation}\label{eq:prop2killSK}
\Dif \Omega(z) [ X_\H(z) ]
+ (\Dif X_\H(z))^\top \Omega(z)
+ \Omega(z) \Dif X_\H(z)=
O_{2n}.
\end{equation}

Then, we compute the action of $\Lie{\omega}$ on $\Omega_K$, thus
obtaining
\begin{equation}\label{eq:LieOK}
\begin{split}
\Lie{\omega} \Omega_K(\theta) = {} &
\Lie{\omega} (\Dif K(\theta))^\top \Omega(K(\theta)) \Dif K(\theta)
+
(\Dif K(\theta))^\top \Lie{\omega} (\Omega(K(\theta))) \Dif K(\theta) \\
& + (\Dif K(\theta))^\top \Omega(K(\theta)) \Lie{\omega} \Dif K(\theta)\,,
\end{split}
\end{equation}
and we use the properties
(obtained from the invariance equation \eqref{eq:invE})
\begin{align*}
& \Lie{\omega} (\Dif K(\theta)) = \Dif E(\theta) - \Dif X_\H(K(\theta) \Dif K(\theta) \,,\\
& \Lie{\omega} (\Omega( K(\theta))) = \Dif \Omega(K(\theta))
[\Lie{\omega}K(\theta)] = \Dif \Omega(K(\theta))
\left[
E(\theta)
-
X_\H(K(\theta))
\right]\,,
\end{align*}
in combination with \eqref{eq:prop2killSK}, thus ending up with
\begin{align*}
\Lie{\omega} \Omega_K(\theta) = {} &
(\Dif E(\theta))^\top \Omega(K(\theta)) \Dif K(\theta)
+
(\Dif K(\theta))^\top (\Dif \Omega(K(\theta)) [E(\theta)]) \Dif K(\theta) \\
& + (\Dif K(\theta))^\top \Omega(K(\theta)) \Dif E(\theta)\,.
\end{align*}
The expression $\Lie{\omega} \Omega_K(\theta)$ is controlled using $H_1$, $H_2$, the 
Banach algebra properties and Cauchy estimates as follows
\begin{align}
\norm{\Lie{\omega} \Omega_K}_{\rho-\delta} \leq {} &
\norm{(\Dif E)^\top}_{\rho-\delta} \norm{\Omega}_{\B} \norm{\Dif K}_\rho
+
\norm{(\Dif K)^\top}_{\rho} \norm{\Dif \Omega}_\B \norm{E}_\rho \norm{\Dif K}_\rho \nonumber \\
& + \norm{(\Dif K)^\top}_\rho \norm{\Omega}_\B \norm{\Dif E}_{\rho-\delta} \nonumber \\
\leq {} & \frac{2n \cteOmega \sigmaDK 
+ \sigmaDKT \cteDOmega \sigmaDK \delta
+ d \sigmaDKT \cteOmega}{\delta} \norm{E}_\rho
=: \frac{\CLieOmegaK}{\delta} \norm{E}_\rho
\,. \label{eq:CLieOK}
\end{align}
In particular, we used that
$\norm{(\Dif \Omega \circ K ) [E]}_\B \leq \norm{\Dif \Omega }_\B \norm{E}_\rho$
(see Section \ref{ssec-anal-prelims}).
The estimate in \eqref{eq:estOmegaK} is obtained as follows
\begin{equation}\label{eq:COK}
\norm{\Omega_K}_{\rho-2 \delta} \leq \frac{c_R}{\gamma \delta^\tau} \norm{\Lie{\omega}\Omega_K}_{\rho-\delta}
\leq \frac{c_R \CLieOmegaK}{\gamma \delta^{\tau+1}} \norm{E}_\rho =:
\frac{\COmegaK}{\gamma \delta^{\tau+1}} \norm{E}_\rho\,,
\end{equation}
where 
we used Lemma \ref{lem:Russmann} and the estimate in \eqref{eq:CLieOK}.
\end{proof}

\begin{remark}
Notice that, even though the KAM theorems \ref{theo:KAM} and
\ref{theo:KAM:iso} do not require a quantitative control con the 1-form
$\aform$, the 
facts that the symplectic structure $\sform$ is exact and the vector field $X_\H$ is
globally Hamiltonian are crucial
to obtain the above result.
\end{remark}

\subsection{Approximate symplectic frame}\label{ssec:symp}

In this section we prove that if $\torus$ is approximately
invariant, then we can construct an adapted frame that is approximately symplectic.
First step is finding an adapted approximately Lagrangian bundle, that contains the tangent bundle $T_\torus \mani$.

\begin{lemma}\label{lem:Lang}
Let us consider the setting of 
Theorem \ref{theo:KAM} or
Theorem \ref{theo:KAM:iso}. Then,
the map
$L(\theta)$ given by \eqref{eq:L:theo} satisfies
\begin{equation}\label{eq:propL}
\norm{L}_\rho \leq \CL\,,
\qquad
\norm{L^\top}_\rho \leq \CLT\,,
\qquad
\aver{L^\top (\Omega \circ K) E}=0_n\,,
\end{equation}
and defines an approximately
Lagrangian bundle, i.e. the error map
\begin{equation}\label{eq:Elag}
\Elag(\theta):=L(\theta)^\top \Omega(K(\theta)) L(\theta)
\end{equation}
is small in the following sense:
\begin{equation}\label{eq:normElag}
\norm{\Elag}_{\rho-2\delta} \leq
\frac{\Clag}{\gamma \delta^{\tau+1}} \norm{E}_\rho \,.
\end{equation}
Furthermore, the objects
\begin{align}
G_L(\theta) := L(\theta)^\top G(K(\theta)) L(\theta) \,, \label{eq:def:GL} \\
\tilde \Omega_L(\theta) := L(\theta)^\top \tilde \Omega(K(\theta)) L(\theta) \,, \label{eq:def:tOmegaL}
\end{align}
are controlled as
\begin{equation}\label{eq:est:ObjL}
\norm{G_L}_\rho \leq \CGL\,, \qquad
\norm{\tilde \Omega_L}_\rho \leq \CtOmegaL\,.
\end{equation}
The above constants are given explicitly in Table \ref{tab:constants:all}.
\end{lemma}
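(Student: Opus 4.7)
The plan is to reduce each claim to a direct computation exploiting the Hamiltonian structure and the involution conditions \eqref{eq:Poisson:H:p}--\eqref{eq:Poisson:p:p}, and then to invoke the quantitative estimates already established in Lemmas \ref{lem:cons:H:p} and \ref{lem:isotrop}. The bounds $\|L\|_\rho\le \CL$ and $\|L^\top\|_\rho\le \CLT$ follow immediately from the definition $L(\theta)=(\Dif K(\theta) \mid X_p(K(\theta)))$ and hypotheses $H_1$--$H_2$ (namely $\|\Dif K\|_\rho<\sigmaDK$, $\|(\Dif K)^\top\|_\rho<\sigmaDKT$, and $\|X_p\|_\B\le \cteXp$, $\|X_p^\top\|_\B\le\cteXpT$), combined with the row-sum matrix-norm convention of Section \ref{ssec-anal-prelims}. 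Likewise, the bounds \eqref{eq:est:ObjL} on $G_L$ and $\tilde\Omega_L$ are obtained from the Banach algebra inequality $\|L^\top (M\circ K) L\|_\rho \le \|L^\top\|_\rho\,\|M\|_\B\,\|L\|_\rho$ applied with $M=G$ and $M=\tilde\Omega$, together with hypothesis $H_1$.

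For the average identity $\langle L^\top(\Omega\circ K)E\rangle = 0_n$, I would split the $n$-vector into its first $d$ and last $n-d$ components. Using $E=X_\H\circ K-\Dif K\,\omega$ and the Hamiltonian identity $\Omega(K)X_\H(K)=(\Dif \H(K))^\top$, the first block rewrites as
\[
(\Dif K)^\top \Omega(K)\,E = (\Dif(\H\circ K))^\top - \Omega_K\,\omega,
\]
whose average vanishes since the first term is the transpose of a full derivative and $\langle\Omega_K\rangle=O_d$ by Lemma \ref{lem:isotrop}. For the second block, the relation $X_p=\Omega^{-1}(\Dif p)^\top$ gives $X_p^\top\Omega=-\Dif p$, so that the involution $\Dif p(K)\cdot X_\H(K)=\{p,\H\}\circ K=0$ yields
\[
X_p(K)^\top \Omega(K)\,E = -\Dif p(K)\, X_\H(K) + \Dif(p\circ K)\,\omega = \Dif(p\circ K)\,\omega,
\]
whose average is zero because $p\circ K$ is periodic componentwise.

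The key quantitative step is the block-wise estimate of $\Elag=L^\top\Omega(K)L$. The same identities give
\[
\Elag = \begin{pmatrix} \Omega_K & (\Dif(p\circ K))^\top \\ -\Dif(p\circ K) & -\Dif p(K)\, X_p(K) \end{pmatrix},
\]
and the $(2,2)$-block vanishes identically because its $(i,j)$-entry equals $\{p_i,p_j\}\circ K = 0$ by \eqref{eq:Poisson:p:p}. Only three blocks then contribute, each already controlled by an earlier lemma: Lemma \ref{lem:isotrop} bounds the $(1,1)$-block by $\COmegaK \|E\|_\rho/(\gamma\delta^{\tau+1})$, and Lemma \ref{lem:cons:H:p} bounds the off-diagonal blocks by estimates of the same form involving $\cteDp$ and $\cteDpT$. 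The main obstacle I anticipate is not algebraic but the careful bookkeeping required to aggregate the three surviving blocks under the row-sum convention, tracking $\|\Dif(p\circ K)\|_{\rho-2\delta}$ separately from $\|(\Dif(p\circ K))^\top\|_{\rho-2\delta}$: this accounting is what dictates the explicit value of $\Clag$ recorded in Table~\ref{tab:constants:all}, whereas the algebraic core---that involution exactly kills the $(2,2)$-block---makes the resulting estimate essentially tight.
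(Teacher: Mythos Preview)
Your proposal is correct and follows essentially the same route as the paper: the block decomposition of $L^\top(\Omega\circ K)E$ and of $\Elag$, the use of the involution relations to kill the $(2,2)$-block and the $X_\H$-term, and the appeal to Lemmas~\ref{lem:cons:H:p} and~\ref{lem:isotrop} for the surviving blocks are exactly what the paper does. The only cosmetic difference is that the paper records the lower block of $L^\top(\Omega\circ K)E$ as $-\Dif(p\circ K)\,\omega$ rather than your $+\Dif(p\circ K)\,\omega$ (a harmless sign discrepancy, since only its average matters), and the paper writes out the row-sum bookkeeping explicitly to arrive at $\Clag=c_R\max\{\CLieOmegaK+\cteDpT,\, d\,\cteDp\}$.
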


\begin{proof}
We first obtain the property of the average in \eqref{eq:propL}
by computing
\[
L(\theta)^\top \Omega(K(\theta)) E(\theta)=
\begin{pmatrix}
(\Dif K(\theta))^\top \Omega(K(\theta)) E(\theta) \\
X_p(K(\theta))^\top \Omega(K(\theta)) E(\theta)
\end{pmatrix}\,.
\]
The upper term satisfies
\begin{align*}
(\Dif K(\theta))^\top \Omega(K(\theta)) E(\theta)
= {} & 
(\Dif K(\theta))^\top \Omega(K(\theta)) X_\H(K(\theta)) - \Omega_K(\theta) \omega \\
= {} & 
(\Dif K(\theta))^\top (\Dif \H(K(\theta)))^\top  - \Omega_K(\theta) \omega \\
= {} & 
(\Dif (\H(K(\theta))))^\top - \Omega_K(\theta) \omega \,,
\end{align*}
which has zero average, since the first term is the derivative of
a periodic function and $\Omega_K$ has zero average (see Lemma \ref{lem:isotrop}).
Moreover,
the lower term satisfies
\begin{equation}\label{eq:aux:comp}
X_p(K(\theta))^\top \Omega(K(\theta)) E(\theta)
= 
\Dif p(K(\theta)) E(\theta) =
-\Dif (p(K(\theta))) \omega \,.
\end{equation}
Hence, it is clear that $\aver{L^\top (\Omega \circ K) E}=0_n$.

We control the norm of the frame $L(\theta)$,
using $H_1$ and $H_2$, as follows
\begin{align}
\norm{L}_\rho \leq {} & \norm{\Dif K}_\rho +
\norm{X_p \circ K}_\rho \leq \sigmaDK+\cteXp =: \CL\,, \label{eq:CL}\\
\norm{L^\top}_\rho \leq {} & 
\max\{
\norm{(\Dif K)^\top}_\rho \, , \,
\norm{X_p^\top \circ K}_\rho\} \leq \max\{\sigmaDKT \, , \, \cteXpT\} =: \CLT\,. \label{eq:CLT}
\end{align}
We have obtained the 
estimates in \eqref{eq:propL}.
Using again the expression of $L(\theta)$ we obtain that the anti-symmetric matrix
\eqref{eq:Elag} is written as
\begin{align*}
\Elag (\theta) = {} &
\begin{pmatrix}
(\Dif K(\theta))^\top 
\Omega(K(\theta)) \Dif K(\theta) & 
(\Dif K(\theta))^\top 
\Omega(K(\theta)) X_p(K(\theta))
\\
X_p(K(\theta))^\top
\Omega(K(\theta)) \Dif K(\theta) & 
X_p(K(\theta))^\top
\Omega(K(\theta)) X_p(K(\theta))
\end{pmatrix} \,.
\end{align*} 
Using the expression $\Omega_K(\theta)$ in \eqref{def-OK},
performing similar computations as in \eqref{eq:aux:comp}, and using
the involution of the first integrals, we end up with
\[
\Elag (\theta) = 
\begin{pmatrix}
\Omega_K(\theta) & 
(\Dif (p(K(\theta))))^\top
\\
- \Dif (p(K(\theta))) &
O_{n-d}
\end{pmatrix}\,.
\]

Then, we have
\begin{align}
\norm{\Elag}_{\rho-2\delta} = {} & \max\{
\norm{\Omega_K}_{\rho-2\delta}+
\norm{(\Dif(p \circ K))^\top}_{\rho-2\delta} \, , \,
\norm{\Dif(p \circ K)}_{\rho-2\delta} \}\,, \nonumber \\
\leq {} & \frac{c_R \max\{\CLieOmegaK + \cteDpT \, , \, d  \cteDp \}}{\gamma \delta^{\tau+1}}
\norm{E}_\rho
=:
\frac{\Clag}{\gamma \delta^{\tau+1}}
\norm{E}_\rho \,, \label{eq:Clag}
\end{align}
where we use Lemmas \ref{lem:cons:H:p} and \ref{lem:isotrop}.
Thus, we have
obtained the estimate in \eqref{eq:normElag}.
Finally, the estimates in \eqref{eq:est:ObjL} with
\begin{align}
\norm{G_L}_\rho \leq {} & \CLT \cteG \CL
=: \CGL \,, \label{eq:CGL} \\
\norm{\tilde \Omega_L}_\rho \leq {} & \CLT \ctetOmega \CL
=: \CtOmegaL \,. \label{eq:CtOmegaL} 
\end{align}
follow directly.
\end{proof}

In the following lemma, we will see that that geometric constructions detailed in Section~\ref{ssec:sym:frame} lead, 
for an approximately invariant torus, to an approximately symplectic frame attached to the torus.

\begin{lemma}\label{lem:sympl}
Let us consider the setting of 
Theorem \ref{theo:KAM} or
Theorem \ref{theo:KAM:iso}. Then,
the map $N : \TT^d \to \RR^{2n \times n}$ given by \eqref{eq:N} satisfies
\begin{equation}\label{eq:propN}
\norm{N}_\rho \leq \CN\,,
\qquad
\norm{N^\top}_\rho \leq \CNT\,,
\end{equation}
and the map 
$P: \TT^d \to \RR^{2n \times 2n}$ 
given by
\[
P(\theta)=
\begin{pmatrix}
L(\theta) & N(\theta)
\end{pmatrix}\,,
\]
induces an approximately symplectic vector bundle isomorphism, i.e.,
the error map
\begin{equation}\label{eq:Esym}
\Esym(\theta) := P(\theta)^\top \Omega(K(\theta)) P(\theta)-\Omega_0\,,
\qquad
\Omega_0 = 
\begin{pmatrix}
O_n & -I_n \\
I_n & O_n
\end{pmatrix}\,,
\end{equation}
is small in the following sense:
\begin{equation}\label{eq:normEsym}
\norm{\Esym}_{\rho-2 \delta} \leq \frac{\Csym}{\gamma \delta^{\tau+1}}
\norm{E}_\rho\,.
\end{equation}
The above constants are given explicitly in Table \ref{tab:constants:all}.
\end{lemma}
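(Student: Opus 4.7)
The plan is to exploit the block structure of $P^\top (\Omega \circ K) P$ and reduce everything to the already-controlled quantities $\Omega_L$ (small by Lemma~\ref{lem:Lang}), $G_L$, $\tilde\Omega_L$, $A$ and $B$. First I would derive the bounds \eqref{eq:propN} in a routine manner: from $N = LA + N^0 B$ with $N^0 = (J\circ K) L$, the estimates
\[
\norm{N}_\rho \leq \CL \norm{A}_\rho + \cteJ \CL \sigmaB\,, \qquad
\norm{N^\top}_\rho \leq \CLT \norm{A^\top}_\rho + \cteJT \CLT \sigmaB\,,
\]
together with the bound on $A$ coming from $\norm{A}_\rho \leq \tfrac12 \sigmaB^2 \CtOmegaL$ in Case II (and $A = 0_n$ in Case III), define the constants $\CN$ and $\CNT$ collected in Table \ref{tab:constants:all}.

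The geometric heart of the proof is the block calculation for $\Esym$. Using the identity $\Omega J = -G$ (which follows from transposing $J^\top \Omega = G$ together with $\Omega^\top = -\Omega$, $G^\top = G$), I would compute the four blocks:
\begin{align*}
L^\top (\Omega\circ K) L &= \Elag\,, \\
L^\top (\Omega\circ K) N &= \Elag A + L^\top (\Omega\circ K) N^0 B = \Elag A - G_L B = \Elag A - I_n\,, \\
N^\top (\Omega\circ K) L &= A^\top \Elag + B^\top L^\top (J^\top \Omega)(K) L = A^\top \Elag + B G_L = A^\top \Elag + I_n\,, \\
N^\top (\Omega\circ K) N &= A^\top \Elag A + (A - A^\top) + B \tilde\Omega_L B\,,
\end{align*}
where I used $B^\top = B$ (since $G_L$ is symmetric) and $G_L B = I_n$. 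Subtracting $\Omega_0$ yields
\[
\Esym = \begin{pmatrix} \Elag & \Elag A \\ A^\top \Elag & N^\top(\Omega\circ K) N \end{pmatrix}.
\]

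The key step — and the only one where the choice between Case II and Case III really matters — is the lower-right block. In Case II, the definition $A = -\tfrac12 B\tilde\Omega_L B$ gives exactly $A - A^\top = -B\tilde\Omega_L B$, so the last two terms in the lower-right block cancel and $N^\top(\Omega\circ K) N = A^\top \Elag A$, linear in $\Elag$. In Case III, one has $A = 0$ and $\tilde\Omega = \Omega$, so $\tilde\Omega_L = \Elag$ and the same block becomes $B\Elag B$, again linear in $\Elag$. Either way, every block of $\Esym$ is a product of bounded quantities times one factor of $\Elag$, and Lemma~\ref{lem:Lang} produces a bound of the desired form
\[
\norm{\Esym}_{\rho - 2\delta} \leq \frac{\Csym}{\gamma \delta^{\tau+1}}\norm{E}_\rho\,,
\]
with $\Csym$ collecting $\Clag$, $\sigmaB$, $\CL$, $\CLT$, $\CtOmegaL$ and $\norm{A}_\rho$ via Banach-algebra estimates.

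The main obstacle, I expect, is bookkeeping rather than analysis: one must keep straight which identity from \eqref{eq:prop:struc1} is invoked in each off-diagonal block (particularly the sign in $\Omega J = -G$ vs.\ $J^\top \Omega = G$), and verify that the cancellation in the lower-right block works uniformly across Cases II and III so that a single constant $\Csym$ suffices. Presenting both cases in parallel, and tabulating the partial bounds for $A$, $B$, $L^\top(\Omega\circ K) N^0$, and $(N^0)^\top(\Omega\circ K) N^0 = \tilde\Omega_L$ before assembling $\Csym$, should keep the computation transparent.
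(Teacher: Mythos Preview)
Your proposal is correct and follows essentially the same route as the paper: the same block computation of $P^\top(\Omega\circ K)P-\Omega_0$, the same use of $\Omega J=-G$ and $J^\top\Omega J=\tilde\Omega$ to reduce each block to an expression linear in $\Elag$, and the same cancellation $(A-A^\top)+B\tilde\Omega_L B=0$ in Case~II (resp.\ $\tilde\Omega_L=\Elag$ with $A=0$ in Case~III) for the lower-right block. The paper organizes the bookkeeping slightly differently---it first records $L^\top(\Omega\circ K)N=\Elag A-I_n$ and then expands $N^\top(\Omega\circ K)N$ directly, arriving at $\Esym=\begin{pmatrix}\Elag & \Elag A\\ A^\top\Elag & A^\top\Elag A\end{pmatrix}$ and $\Csym=(1+\CA)\max\{1,\CA\}\Clag$ in Case~II---but the substance is identical to what you outline.
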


\begin{proof}
First we control the norm of $N^0(\theta)$ in \eqref{eq:N0}, using $H_1$
and \eqref{eq:propL}, as
\begin{align}
\norm{N^0}_\rho \leq {} & \norm{J\circ K}_\rho \norm{L}_\rho
\leq \norm{J}_{\B} \norm{L}_\rho \leq \cteJ \CL =: \CNO \,, \label{eq:CNO} \\
\norm{(N^0)^\top}_\rho \leq {} & \norm{L^\top}_\rho \norm{(J\circ K)^\top}_\rho 
\leq \norm{L^\top}_\rho \norm{J^\top}_{\B} \leq \CLT \cteJT =: \CNOT \,, \label{eq:CNOT}
\end{align}
and the norm of
$A(\theta)$ in \eqref{eq:A}
as
\begin{equation}\label{eq:cA}
\norm{A}_\rho \leq 
\frac{1}{2} \norm{B^\top}_\rho \norm{L^\top (\tilde \Omega \circ K)
L}_\rho \norm{B}_\rho
\leq \frac{1}{2} \sigmaB \CtOmegaL \sigmaB =: \CA\,,
\end{equation}
where we also used $H_3$, the second estimate in \eqref{eq:est:ObjL} and
that $B(\theta)$ is symmetric.
We also control the complementary normal vectors as 
\begin{align}
\norm{N}_\rho \leq {} & 
\norm{L}_\rho \norm{A}_\rho +
\norm{N^0}_\rho \norm{B}_\rho 
\leq {}  \CL \CA + \CNO \sigmaB =: \CN \,, \label{eq:CN} \\
\norm{N^\top}_\rho \leq {} &  
\norm{A^\top}_\rho \norm{L^\top}_\rho + \norm{B^\top}_\rho \norm{(N^0)^\top}_\rho  
\leq {}  \CA \CLT + \sigmaB \CNOT =: \CNT \,, \label{eq:CNT}
\end{align}
where we used $H_1$, 
the estimates \eqref{eq:propL}, \eqref{eq:CNO}, \eqref{eq:CNOT}, and \eqref{eq:cA},
and the fact that $A(\theta)$ is anti-symmetric. Thus, we have obtained the 
estimates in \eqref{eq:propN}.

To characterize the error in the symplectic character of the frame, we
compute
\begin{equation}\label{eq:neqEsym}
\Esym(\theta)
= 
\begin{pmatrix}
L(\theta)^\top 
\Omega(K(\theta)) L(\theta) & 
L(\theta)^\top 
\Omega(K(\theta)) N(\theta) + I_n
\\
N(\theta)^\top
\Omega(K(\theta)) L(\theta) - I_n & 
N(\theta)^\top
\Omega(K(\theta)) N(\theta)
\end{pmatrix} \,,
\end{equation}
and we expand the components of this block matrix using \eqref{eq:N}, \eqref{eq:B},
and \eqref{eq:Elag}.
For example, we have
\begin{align}
L(\theta)^\top \Omega(K(\theta)) & N(\theta) \nonumber \\
= {} & 
L(\theta)^\top \Omega(K(\theta)) L(\theta) A(\theta)+
L(\theta)^\top \Omega(K(\theta)) N^0(\theta) B(\theta) \nonumber \\
= {} & \Elag(\theta) A(\theta) - L(\theta)^\top G(K(\theta)) L(\theta)B(\theta)
\nonumber \\
= {} & \Elag(\theta) A(\theta) - I_n\,, \label{eq:LON}
\end{align}
where we used that $J^\top \Omega=G$ and the definition of $B(\theta)$.
We also have
\begin{align}
N(\theta)^\top & \Omega(K(\theta)) N(\theta) \nonumber \\
= {} & 
B(\theta)^\top N^0(\theta)^\top \Omega(K(\theta)) L(\theta)
A(\theta) 
+ B(\theta)^\top N^0(\theta)^\top \Omega(K(\theta))
  N^0(\theta) B(\theta) \nonumber \\
& + A(\theta)^\top L(\theta)^\top \Omega(K(\theta)) L(\theta) A(\theta) 
+A(\theta)^\top L(\theta)^\top \Omega(K(\theta)) N^0(\theta)
B(\theta) \nonumber \\
= {} & A(\theta)^\top \Elag(\theta) A(\theta)+A(\theta)-A(\theta)^\top 
+ B(\theta)^\top L(\theta)^\top \tilde \Omega(K(\theta))
  L(\theta) B(\theta) \nonumber \\
= {} & A(\theta)^\top \Elag(\theta) A(\theta) \,. \label{eq:NON}
\end{align}

Then,
introducing the expressions \eqref{eq:Elag},
\eqref{eq:LON} and \eqref{eq:NON} into \eqref{eq:neqEsym}, we get
\[
\Esym(\theta)
= 
\begin{pmatrix}
\Elag(\theta) & \Elag(\theta) A(\theta) \\
A(\theta)^\top \Elag(\theta) &
A(\theta)^\top \Elag(\theta) A(\theta)
\end{pmatrix}\,,
\]
which is controlled as
\begin{equation}\label{eq:Csym}
\norm{\Esym}_{\rho-2\delta} \leq \frac{(1+\CA) \max\{1\, , \, \CA\} \Clag}{\gamma \delta^{\tau+1}}
\norm{E}_\rho =: \frac{\Csym}{\gamma \delta^{\tau+1}}
\norm{E}_\rho \,,
\end{equation}
thus obtaining the estimate \eqref{eq:normEsym}.
\end{proof}

\begin{remark}
The above estimates can be readily adapted to \textbf{Case III}, for which $A=0$. In this case
we have (computations are left as an exercise to the reader)
\[
N(\theta) = N^0(\theta) B(\theta) 
\,,
\qquad
\Esym(\theta)
= 
\begin{pmatrix}
\Elag(\theta) & O_n \\
O_n &
B(\theta)^\top \Elag(\theta) B(\theta)
\end{pmatrix}\,.
\]
The corresponding estimates
are given explicitly in Table \ref{tab:constants:all}.
\end{remark}

\subsection{Sharp control of the torsion matrix}

In this section we will control the torsion matrix $T(\theta)$, given in \eqref{eq:T}. To do so, we could 
use directly Cauchy estimates to control $\Lie{\omega} N(\theta)$, resulting in an additional bite in the domain and 
an additional factor $\delta$ in the denominator (among other overestimations). Hence, 
in order to improve this estimate, thus enhancing the threshold
of validity of the result, we perform a finer analysis of the expression for
$T(\theta)$. To this end, it is convenient to include here an additional smallness condition for the error of invariance
(see \eqref{eq:fake:cond} below), that later on it turns out will be rather irrelevant . 

\begin{lemma}\label{lem:twist}
Let us consider the setting of
Theorem \ref{theo:KAM} or
Theorem \ref{theo:KAM:iso},
and let us assume that
\begin{equation}\label{eq:fake:cond}
\frac{\norm{E}_\rho}{\delta}<\cauxT\,,
\end{equation}
where $\cauxT$ is an independent constant.
Then, 
the torsion matrix $T(\theta)$, given by \eqref{eq:T}, 
has components in $\Anal(\TT^d_{\rho-\delta})$ and
satisfies
the estimate
\[
\norm{T}_{\rho-\delta} \leq \CT\,,
\]
where the constant $\CT$ is provided in Table \ref{tab:constants:all}.
\end{lemma}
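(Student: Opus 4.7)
The plan is to bypass a direct Cauchy estimate on $\Dif N[\omega]$, which would otherwise cost an extra factor $1/\delta$. Using the decomposition $N = LA + N^0 B$ from \eqref{eq:N}, I would first expand
\[
\Loper{N} = \Loper{L}\, A + L\, \Lie{\omega} A + \Loper{N^0}\, B + N^0\, \Lie{\omega} B,
\]
so the problem reduces to bounding these four blocks and then multiplying on the left by $N^\top \Omega(K)$. The identity $N^\top \Omega(K) L = I_n + A^\top \Elag$, obtained by transposing \eqref{eq:LON}, will handle the second block cleanly, while the other blocks will be controlled by constants coming from $H_1$--$H_3$.

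The key algebraic trick is to replace every derivative that would naively call for Cauchy on $K$ by an instance of the invariance equation: for any analytic $f$ defined on $\B$,
\[
\Lie{\omega}(f\circ K) = -\Dif f(K)\,\Dif K\,\omega = -\Dif f(K)\,X_\H(K) + \Dif f(K)\,E,
\]
so $\norm{\Lie{\omega}(f\circ K)}_\rho \leq \norm{\Dif f}_{\B} (\norm{X_\H}_{\B} + \norm{E}_\rho)$ with no $\delta$-loss. Applied to $f=X_p, J, G, \tilde\Omega$ (and their transposes) this yields $O(1)$ bounds for the corresponding Lie derivatives. Differentiating the invariance equation in turn gives $\Lie{\omega}(\Dif K) = -\Dif X_\H(K)\,\Dif K + \Dif E$, and combining with the commutation rule \eqref{eq:comm:H:p:bis} one obtains closed-form expressions for $\Lie{\omega} L$, $\Loper{L}$, and $\Loper{N^0}$. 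A direct check shows that $\Loper{L}$ has columns $\Dif E$ and $\Dif X_p(K)\,E$ (both of order $\norm{E}_\rho$), whereas cancellations in $\Loper{N^0}$ leave bounded commutator-type terms such as $[\Dif X_\H(K), J(K)]\,L - \Dif J(K)\,X_\H(K)\,L$ plus a small $O(\norm{E}_\rho)$ remainder.

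With these ingredients, $\Lie{\omega} A$ and $\Lie{\omega} B$ follow by differentiating the algebraic definitions \eqref{eq:B} and \eqref{eq:A} along $\omega$. Explicitly, $\Lie{\omega} B = -B\,(\Lie{\omega} G_L)\,B$ with $\Lie{\omega} G_L = (\Lie{\omega} L)^\top G(K)\,L + L^\top \Lie{\omega}(G\circ K)\,L + L^\top G(K)\,\Lie{\omega} L$, and an analogous identity for $\Lie{\omega} A$ involving $\Lie{\omega}(\tilde\Omega\circ K)$ (with $\Lie{\omega} A = 0$ in Case III). Since every factor has already been bounded, $\Lie{\omega} A$ and $\Lie{\omega} B$ are controlled by constants plus $O(\norm{E}_\rho/\delta)$ corrections arising from the single Cauchy estimate applied to $\Dif E$.

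The main obstacle is careful bookkeeping rather than a single deep step: many matrix products have to be expanded, transposes tracked, and every piece of order $\norm{E}_\rho$ (possibly carrying a Cauchy-induced $1/\delta$) must be absorbed using the auxiliary smallness condition \eqref{eq:fake:cond}, which converts $\norm{E}_\rho/\delta$ into the constant $\cauxT$. Assembling the four blocks yields $\norm{T}_{\rho-\delta} \leq \CT$, with $\CT$ an explicit polynomial in the constants from Table~\ref{tab:constants:all} and in $\cauxT$. Analyticity of $T$ on $\TT^d_{\rho-\delta}$ then follows because each building block is analytic on that strip.
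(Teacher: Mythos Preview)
Your proposal is correct and follows essentially the same strategy as the paper: avoid Cauchy estimates on $N$ by expanding $\Loper{N}$ through the decomposition $N=LA+N^0B$, control each Lie derivative $\Lie{\omega}K$, $\Lie{\omega}L$, $\Lie{\omega}(J\circ K)$, $\Lie{\omega}G_L$, $\Lie{\omega}\tilde\Omega_L$, $\Lie{\omega}B$, $\Lie{\omega}A$ via the invariance equation, and absorb the single $\Dif E$ term using \eqref{eq:fake:cond}. The only cosmetic differences are that the paper keeps $\Dif X_\H(K)N^0$ and $\Lie{\omega}N^0$ separate rather than grouping them into $\Loper{N^0}$ and identifying the commutator $[\Dif X_\H(K),J(K)]L$ as you do, and the paper does not exploit the identity $N^\top\Omega(K)L = I_n + A^\top\Elag$ for the $L\,\Lie{\omega}A$ block---it simply bounds $T$ by $\norm{N^\top}_\rho\,\cteOmega\,\norm{\Loper{N}}_{\rho-\delta}$, arriving at $\CT=\CNT\,\cteOmega\,\CLoperN$.
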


\begin{proof}
Recalling that $\Lie{\omega}(\cdot) = - \Dif (\cdot) \omega$ and \eqref{eq:Loper}, we
have
\begin{equation}\label{eq:line1}
\Loper{N}(\theta) = 
\Dif X_\H (K(\theta)) N(\theta) + \Lie{\omega}N (\theta)\,,
\end{equation}
where
\begin{equation}\label{eq:LieN:expanded}
\Lie{\omega}N(\theta) = 
\Lie{\omega} L(\theta) A(\theta)  
+ L(\theta) \Lie{\omega} A(\theta) 
+ \Lie{\omega} N^0(\theta) B(\theta)
+ N^0(\theta) \Lie{\omega}B(\theta) \,.
\end{equation}
Then, we must estimate the terms 
$\Lie{\omega}L(\theta)$,
$\Lie{\omega} A(\theta)$,
$\Lie{\omega} N^0(\theta)$, and
$\Lie{\omega}B(\theta)$,
that appear above. 

We start considering
\begin{equation}\label{eq:LieK}
\Lie{\omega}K(\theta)=E(\theta) - X_{\H}(K(\theta))
\end{equation}
which is controlled as
\begin{equation}\label{eq:CLieK}
\norm{\Lie{\omega}K}_{\rho} \leq \norm{E}_\rho+ \norm{X_\H \circ K}_\rho \leq \delta \cauxT + \cteXH =: \CLieK\,,
\end{equation}
where we used $H_1$ and the assumption \eqref{eq:fake:cond}. Then we consider the object
$\Lie{\omega}L(\theta)$, given by
\begin{align*}
\Lie{\omega} L(\theta) = {} &
\begin{pmatrix}
\Lie{\omega} (\Dif K(\theta)) & \Lie{\omega} (X_p(K(\theta)))
\end{pmatrix} \,.
\end{align*}
Notice that the left block in the above expression follows
by taking derivatives at both sides of \eqref{eq:LieK}, i.e.
\[
\Lie{\omega} (\Dif K(\theta)) = \Dif E(\theta) -
\Dif X_\H(K(\theta)) [\Dif K(\theta)]\,,
\]
and the right block follows from
\[
\Lie{\omega}(X_p(K(\theta)))
=\Dif X_p(K(\theta)) [\Lie{\omega}K(\theta)] \,.
\]
Then, using Cauchy estimates, 
the Hypothesis $H_1$ and $H_2$, 
the assumption \eqref{eq:fake:cond} and the estimate
\eqref{eq:CLieK}, we have
\begin{equation}\label{eq:CLieL}
\norm{\Lie{\omega}L}_{\rho-\delta} \leq  d \cauxT + \cteDXH \sigmaDK + \cteDXp \CLieK
=: \CLieL \,. 
\end{equation}
Similarly, we obtain the estimates
\begin{equation}\label{eq:CLieLT}
\norm{\Lie{\omega}L^\top}_{\rho-\delta} \leq 
\max \left\{2n \cauxT + \cteDXHT \sigmaDK \, , \, \cteDXpT \CLieK \right\} 
=: \CLieLT \,.
\end{equation}

The term $\Lie{\omega}N^0(\theta)$ is controlled using that
\begin{equation}\label{eq:LieNO}
\Lie{\omega}N^0(\theta) = \Lie{\omega} (J(K(\theta))) L(\theta)+J(K(\theta)) \Lie{\omega}L(\theta)
\end{equation}
and the chain rule. Actually, we have
\begin{align}
\norm{\Lie{\omega}(J\circ K)}_\rho \leq {} & 
    \cteDJ \CLieK =: \CLieJ \,, \label{eq:CLieJ} \\
\norm{\Lie{\omega}(G\circ K)}_\rho \leq {} & 
    \cteDG \CLieK =: \CLieG \,, \label{eq:CLieG} \\
\norm{\Lie{\omega}(\tilde \Omega \circ K)}_\rho \leq {} & 
    \cteDtOmega \CLieK =: \CLietOmega \,, \label{eq:CLietOmega}
\end{align}
and then, using \eqref{eq:CLieJ}, the expression \eqref{eq:LieNO} is controlled as
\begin{equation}\label{eq:CLieNO}
\norm{\Lie{\omega}N^0}_{\rho-\delta} \leq \CLieJ \CL + \cteJ \CLieL =: \CLieNO\,.
\end{equation}

Before controlling $\Lie{\omega}B(\theta)$ and $\Lie{\omega}A(\theta)$, we 
recall the notation for $G_L(\theta)$ and $\tilde \Omega_L(\theta)$, given by \eqref{eq:def:GL}
and \eqref{eq:def:tOmegaL} respectively,
and we control the action of $\Lie{\omega}$ on these
objects. For example, we have
\begin{align*}
\Lie{\omega} G_L(\theta) = {} & \Lie{\omega}L(\theta)^\top G(K(\theta)) L(\theta) \\
& + L(\theta)^\top \Lie{\omega}G(K(\theta)) L(\theta) 
+ L(\theta)^\top G(K(\theta)) \Lie{\omega}L(\theta)
\end{align*}
and, using \eqref{eq:CLieG}, we obtain the estimate
\begin{equation}\label{eq:CLieGL}
\norm{\Lie{\omega} G_L}_{\rho-\delta} \leq 
\CLieLT \cteG \CL + \CLT \CLieG \CL + \CLT \cteG \CLieL =:
\CLieGL \,.
\end{equation} 
Analogously, using \eqref{eq:CLietOmega}, we obtain the estimate
\begin{equation}\label{eq:CLietOmegaL}
\norm{\Lie{\omega} \tilde \Omega_L}_{\rho-\delta} \leq 
\CLieLT \ctetOmega \CL + \CLT \CLietOmega \CL + \CLT \ctetOmega \CLieL =:
\CLietOmegaL\,.
\end{equation} 

Now we obtain a suitable expression for $\Lie{\omega}B(\theta)$. To this end, we compute
\[
O_n = \Lie{\omega}(B(\theta)^{-1} B(\theta)) = \Lie{\omega}(B(\theta)^{-1}) B(\theta) + B(\theta)^{-1} \Lie{\omega}(B(\theta))\,,
\]
which, recalling \eqref{eq:B},
yields the following expression
\[
\Lie{\omega}B(\theta) 
=  -B(\theta) \Lie{\omega}(L(\theta)^\top G(K(\theta)) L(\theta)) B(\theta) 
=  -B(\theta) \Lie{\omega} G_L(\theta) B(\theta) \,.
\]
Then, 
using \eqref{eq:CLieGL},
we get the estimate
\begin{equation}\label{eq:CLieB}
\norm{\Lie{\omega} B}_{\rho-\delta} \leq (\sigmaB)^2 \CLieGL = :\CLieB \,.
\end{equation} 
Since $B(\theta)$
is symmetric, we also have $\norm{\Lie{\omega} B^\top}_{\rho-\delta} \leq \CLieB$.

Finally, using the above notation, we expand $\Lie{\omega}A(\theta)$ as
\begin{align*}
\Lie{\omega}A(\theta) 
= {} & -\frac12\Lie{\omega} B(\theta)^\top \tilde \Omega_L(\theta) B(\theta) -\frac12 B(\theta)^\top\Lie{\omega} \tilde \Omega_L(\theta)  B(\theta) \\
     & -\frac12 B(\theta)^\top \tilde \Omega_L(\theta) \Lie{\omega}B(\theta) \,,
\end{align*}
which, using the constants \eqref{eq:CtOmegaL} and \eqref{eq:CLietOmegaL}, yields the following estimate
\begin{equation}\label{eq:CLieA}
\norm{\Lie{\omega} A}_{\rho-\delta} \leq 
\CLieB \CtOmegaL \sigmaB + \frac{1}{2} (\sigmaB)^2 \CLietOmegaL =:
\CLieA\,.
\end{equation}

With the above objects, we can control \eqref{eq:LieN:expanded} as
follows
\begin{equation}\label{eq:CLieN}
\norm{\Lie{\omega} N}_{\rho-\delta} \leq  \CLieL \CA + \CL \CLieA + 
\CLieNO \sigmaB + \CNO \CLieB =:  \CLieN\,.
\end{equation}
This estimate will be used later in the proof of Lemma \ref{lem:KAM:inter:integral}.
Now, we could use \eqref{eq:CLieN} in equation \eqref{eq:line1} to
obtain
\[
\norm{\Loper{N}}_{\rho-\delta} \leq \cteDXH \CN + \CLieN\,.
\]
However, we obtain a sharper estimate observing that
\begin{align*}
\Loper{N}(\theta) = {} &
\Loper{L}(\theta)  A(\theta)
+ \Dif X_\H (K(\theta)) N^0(\theta) B(\theta) \\
& 
+  L(\theta) \Lie{\omega}A(\theta)
+ \Lie{\omega} N^0(\theta) B(\theta) + N^0(\theta) \Lie{\omega}B(\theta)\,, 
\end{align*}
where
\begin{align*}
\Loper{L}(\theta) = {} &
\Dif X_\H (K(\theta)) L(\theta) + \Lie{\omega}L (\theta) \\
= {} &
\begin{pmatrix}
\Dif E(\theta) & \Dif X_p(K(\theta)) [E(\theta)]
\end{pmatrix} \,.
\end{align*}
This last expression follows using the previous formula for
$\Lie{\omega}L(\theta)$ and the fact that the vector field $X_{\H}$
commutes with the fields $X_{p_i}$ for $1\leq i \leq n-d$.

Then, the objects $\Loper{L}(\theta)$ and $\Loper{L}(\theta)^\top$ are controlled as follows:
\begin{align}
\norm{\Loper{L}}_{\rho-\delta} \leq {} & \frac{d + \cteDXp \delta}{\delta} \norm{E}_\rho =: \frac{\CLoperL}{\delta} \norm{E}_\rho \,, \label{eq:CLoperL} \\
\norm{\Loper{L}^\top}_{\rho-\delta} \leq {} & \frac{\max\{2n\, , \, \cteDXpT \delta\}}{\delta} \norm{E}_\rho =: \frac{\CLoperLT}{\delta} \norm{E}_\rho \,, \label{eq:CLoperLT}
\end{align}
and, using again the smallness condition \eqref{eq:fake:cond}, we obtain
\begin{align}
\norm{\Loper{N}}_{\rho-\delta} \leq {} &
\CLoperL \cauxT \CA+\cteDXH \CNO \sigmaB + \CL \CLieA + 
\CLieNO \sigmaB + \CNO \CLieB \nonumber \\
 =: {} & \CLoperN \,. \label{eq:CLoperN} 
\end{align}

Finally, the torsion matrix 
satisfies
\begin{equation}\label{eq:CT}
\norm{T}_{\rho-\delta} \leq \CNT \cteOmega \CLoperN =: \CT \,,
\end{equation}
which completes the proof.
\end{proof}

\begin{remark}
The bound $\CT$ of Lemma~\ref{lem:twist} could be improved for the particular problem
at hand, since the expression for $T(\theta)$ can in many cases be obtained explicitly and may
have cancellations. 
\end{remark}

\subsection{Approximate reducibility}

A crucial step in the proofs of the KAM theorems is the resolution
of the linearized equation arising from the application of Newton
method. The resolution is based on the (approximate) reduction of
such linear system into a simpler form, in particular, block
triangular form. This is the content of the following lemma. 

\begin{lemma}\label{lem:reduc}
Let us consider the setting of 
Theorem \ref{theo:KAM} or
Theorem \ref{theo:KAM:iso}. Then, the map
$P:\TT^d \to \RR^{2n \times 2n}$, characterized in Lemma \ref{lem:sympl},
approximately reduces the linearized equation associated with the vector field
 $\Dif X_\H \circ K$ to a
block-triangular matrix, i.e. the error map
\begin{equation}\label{eq:Ered}
\Ered (\theta) :=
-\Omega_0 P(\theta)^\top \Omega(K(\theta))
\left(
\Dif X_\H(K(\theta)) P(\theta)
+\Lie{\omega}P(\theta)
\right) - \Lambda(\theta)\,,
\end{equation}
with
\begin{equation}\label{eq:Lambda}
\Lambda(\theta)
= \begin{pmatrix}
O_n &  T(\theta) \\ 
O_n  & O_n
\end{pmatrix}
\end{equation}
and $T(\theta)$ is given by \eqref{eq:T},
is small in the following sense:
\[
\norm{\Ered}_{\rho-2\delta} \leq \frac{\Cred}{\gamma \delta^{\tau+1}}
\norm{E}_\rho\,,
\]
where the constant $\Cred$ is provided in Table \ref{tab:constants:all}.
\end{lemma}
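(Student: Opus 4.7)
The plan is to expand $\Ered$ in the block structure induced by $P=(L~N)$ and to bound each block separately. Since $\Dif X_\H(K) P + \Lie{\omega}P = \Loper{P} = (\Loper{L}~\Loper{N})$ by the very definition \eqref{eq:Loper}, I first write
\[
P^\top\Omega(K)\Loper{P}=\begin{pmatrix} L^\top\Omega(K) \Loper{L} & L^\top\Omega(K) \Loper{N} \\ N^\top\Omega(K)\Loper{L} & N^\top\Omega(K)\Loper{N}\end{pmatrix}.
\]
A direct multiplication by $-\Omega_0$ rearranges the blocks, and the resulting $(1,2)$-entry is $N^\top\Omega(K)\Loper{N}=T(\theta)$ by definition \eqref{eq:T}, which cancels exactly the single nonzero block of $\Lambda$. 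Consequently
\[
\Ered(\theta)=\begin{pmatrix} N(\theta)^\top\Omega(K(\theta))\Loper{L}(\theta) & O_n \\ -L(\theta)^\top\Omega(K(\theta))\Loper{L}(\theta) & -L(\theta)^\top\Omega(K(\theta))\Loper{N}(\theta) \end{pmatrix}.
\]

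For the first column of $\Ered$ I rely on the compact formula
\[
\Loper{L}(\theta)=\begin{pmatrix}\Dif E(\theta) & \Dif X_p(K(\theta))[E(\theta)]\end{pmatrix},
\]
already derived in the proof of Lemma~\ref{lem:twist}, together with the Cauchy estimate for $\Dif E$ underlying \eqref{eq:CLoperL}--\eqref{eq:CLoperLT}. The norms of $N^\top$, $L^\top$ and $\Omega$ are controlled by $H_1$ and by Lemmas~\ref{lem:Lang}--\ref{lem:sympl}, so both $N^\top\Omega(K)\Loper{L}$ and $L^\top\Omega(K)\Loper{L}$ are bounded by $C\|E\|_\rho/\delta$, a rate that I absorb into the target $1/(\gamma\delta^{\tau+1})$ by multiplying and dividing by $\gamma\delta^\tau$.

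The main obstacle is the $(2,2)$-block, because $\Loper{N}$ itself is $O(1)$ in $\|E\|_\rho$ (it is bounded by the constant $\CLoperN$ of Lemma~\ref{lem:twist}): only the combination $L^\top\Omega(K)\Loper{N}$ is small, and this has to be exhibited by an indirect algebraic manipulation. The key move is to apply $\Lie{\omega}$ to the identity \eqref{eq:LON}, namely $L^\top\Omega(K)N=\Elag A-I_n$. Expanding the left-hand side by Leibniz and substituting $\Lie{\omega}L=\Loper{L}-\Dif X_\H(K)L$, $\Lie{\omega}N=\Loper{N}-\Dif X_\H(K)N$, and $\Lie{\omega}(\Omega\circ K)=\Dif\Omega(K)[E-X_\H(K)]$, the $X_\H$-dependent contributions collect into the combination $L^\top\bigl[(\Dif X_\H)^\top\Omega+\Omega\,\Dif X_\H+\Dif\Omega[X_\H]\bigr]N$, which vanishes by the closedness identity \eqref{eq:prop2killSK}. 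What survives is
\[
L^\top\Omega(K)\Loper{N}=(\Lie{\omega}\Elag)\,A+\Elag\,(\Lie{\omega}A)-\Loper{L}^\top\Omega(K)\,N-L^\top\Dif\Omega(K)[E]\,N,
\]
in which every term is linear in one of $E$, $\Loper{L}$, $\Elag$ or $\Lie{\omega}\Elag$.

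To conclude, each summand is estimated with quantities already at hand: $\Elag$ via \eqref{eq:normElag} at the bad rate $1/(\gamma\delta^{\tau+1})$, $\Lie{\omega}\Elag$ by an estimate of the type \eqref{eq:CLieOK} combined with the Lie-derivative bounds \eqref{eq:LDp} for $\Dif(p\circ K)$ (both at rate $1/\delta$), $\Loper{L}^\top$ via \eqref{eq:CLoperLT}, $\Lie{\omega}A$ via \eqref{eq:CLieA}, and the remaining factors ($A$, $L$, $N$, $\Omega$, $\Dif\Omega$) by the constants of $H_1$ and Lemmas~\ref{lem:Lang}--\ref{lem:sympl}. Assembling the three block bounds and taking the maximum of the row sums defines $\Cred$ and gives $\|\Ered\|_{\rho-2\delta}\leq\Cred\|E\|_\rho/(\gamma\delta^{\tau+1})$. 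The genuine subtlety lies only in the identity above: without the closedness of $\sform$ (used through \eqref{eq:prop2killSK}), the $(2,2)$-block would retain an $O(1)$ remainder and approximate reducibility would collapse.
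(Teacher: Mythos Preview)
Your proposal is correct and follows essentially the same approach as the paper: the identical block decomposition of $\Ered$, the cancellation of the $(1,2)$-block by the definition of $T$, the direct bounds on the first column via the formula for $\Loper{L}$, and---crucially---the same indirect treatment of the $(2,2)$-block by applying $\Lie{\omega}$ to the identity \eqref{eq:LON} and invoking the closedness relation \eqref{eq:prop2killSK} to kill the $O(1)$ part. Your resulting expression for $L^\top\Omega(K)\Loper{N}$ is exactly (up to the expansion $\Lie{\omega}(\Elag A)=(\Lie{\omega}\Elag)A+\Elag\,\Lie{\omega}A$) the paper's formula \eqref{eq:Ered22}, and your term-by-term estimates match those leading to \eqref{eq:Creduu}--\eqref{eq:CEred}.
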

\begin{proof}
Using the notation in \eqref{eq:Loper}
we write the block components of \eqref{eq:Ered},
denoted as $\Ered^{i,j}(\theta)$, as follows:
\begin{align}
\Ered^{1,1}(\theta) = {} & 
N(\theta)^\top \Omega(K(\theta)) \Loper{L}(\theta) \,, \label{eq:Ered11} \\
\Ered^{1,2}(\theta) = {} & 
N(\theta)^\top \Omega(K(\theta)) \Loper{N}(\theta) - T(\theta) = O_n \,,
\label{eq:Ered12} \\
\Ered^{2,1}(\theta) = {} & 
-L(\theta)^\top \Omega(K(\theta)) \Loper{L}(\theta) \,, 
\label{eq:Ered21} \\
\Ered^{2,2}(\theta) = {} & 
-L(\theta)^\top \Omega(K(\theta)) \Loper{N}(\theta) \,.
\nonumber
\end{align}
Notice that we have 
used
the definition of $T(\theta)$ to see that \eqref{eq:Ered12} vanishes.
To gather a suitable expression for $\Ered^{2,2}(\theta)$, we apply $\Lie{\omega}$
at both sides of the expression obtained in \eqref{eq:LON}:
\[
\Lie{\omega}(L(\theta)^\top \Omega(K(\theta))) N(\theta)+
L(\theta)^\top \Omega(K(\theta)) \Lie{\omega}N(\theta) =
\Lie{\omega} (\Elag(\theta) A(\theta))\,.
\]
Then, 
introducing this expression into $\Ered^{2,2}(\theta)$,
using \eqref{eq:invE} and the geometric property \eqref{eq:prop2killSK}, we obtain
\begin{align}
\Ered^{2,2}(\theta) = {} & -L(\theta)^\top \Omega(K(\theta)) \Dif X_\H(K(\theta))N(\theta)
+ \Lie{\omega}(L(\theta)^\top \Omega(K(\theta))) N(\theta) \nonumber \\
 & - \Lie{\omega} (\Elag(\theta) A(\theta)) \nonumber \\
= {} & 
L(\theta)^\top (\Dif \Omega(K(\theta)) [E(\theta)]) N(\theta)
+\Loper{L}(\theta)^\top \Omega(K(\theta))N(\theta) 
- \Lie{\omega} (\Elag(\theta) A(\theta)) \,.
\label{eq:Ered22}
\end{align}

At this point, we could use Cauchy estimates in the expression
\[
\Lie{\omega} (\Elag(\theta) A(\theta)) = - \Dif (\Elag(\theta) A(\theta)) [\omega]
\]
and obtain an estimate controlled by $\norm{E}_\rho$. However, this
would give a control of the form $\norm{\Ered}_{\rho-3\delta}$, 
and we are interested in keeping the strip of analyticity $\rho-2\delta$.
For this reason, we compute
\begin{equation}\label{eq:LieElagA}
\Lie{\omega} (\Elag(\theta) A(\theta)) =
\Lie{\omega} \Elag(\theta) \ A(\theta)
+
\Elag(\theta) \ \Lie{\omega} A(\theta)\,,
\end{equation}
and consider the block components of
\begin{equation}\label{eq:LieElag}
\Lie{\omega} \Elag (\theta) 
= 
\begin{pmatrix}
\Lie{\omega}\Omega_K(\theta) & 
\Lie{\omega}(\Dif (p(K(\theta))))^\top
\\
-\Lie{\omega}(\Dif (p(K(\theta)))) &
O_{n-d}
\end{pmatrix}\,.
\end{equation}

Then, we control \eqref{eq:LieElag} as follows
\begin{align}
\norm{\Lie{\omega} \Elag}_{\rho-\delta} \leq {} & 
\max\{
\norm{\Lie{\omega} \Omega_K}_{\rho-\delta}+
\norm{\Lie{\omega}(\Dif (p \circ K))^\top}_{\rho-\delta}
\, , \,
\norm{\Lie{\omega}(\Dif (p \circ K))}_{\rho-\delta}
\} \nonumber \\
\leq {} &  
\frac{
\max \{
\CLieOmegaK + \cteDpT \, , \, d \cteDp
\}
}{\delta} 
\norm{E}_\rho =: \frac{\CLieOmegaL}{\delta} 
\norm{E}_\rho 
\label{eq:CLieOmegaL}
\end{align}
where we used estimates \eqref{eq:LDp} and \eqref{eq:Dp} from Lemma~\ref{lem:cons:H:p}.

Finally, we estimate the norms of the block components of $\Ered$
using the expressions \eqref{eq:Ered11}, \eqref{eq:Ered21}, \eqref{eq:Ered22}
and \eqref{eq:LieElagA},
and the previous estimates:
\begin{align}
\norm{\Ered^{1,1}}_{\rho-2\delta} \leq {} &
\norm{N^\top}_\rho \norm{\Omega}_\B \norm{\Loper{L}}_{\rho-\delta}
\leq \frac{\CNT \cteOmega \CLoperL}{\delta} \norm{E}_\rho =: 
\frac{\Creduu}{\delta}\norm{E}_\rho
\,, \label{eq:Creduu} \\
\norm{\Ered^{1,2}}_{\rho-2\delta} = {} & 0 \,, \nonumber \\
\norm{\Ered^{2,1}}_{\rho-2\delta} \leq {} &
\norm{L^\top}_\rho \norm{\Omega}_\B \norm{\Loper{L}}_{\rho-\delta}
\leq \frac{\CLT \cteOmega \CLoperL}{\delta} \norm{E}_\rho 
=:
\frac{\Creddu}{\delta}\norm{E}_\rho
\,, \label{eq:Creddu} \\
\norm{\Ered^{2,2}}_{\rho-2\delta} \leq {} &
\left( \CLT \cteDOmega \CN + \frac{\CLoperLT \cteOmega \CN }{\delta}
+ \frac{\CLieOmegaL \CA}{\delta} +\frac{\Clag \CLieA}{\gamma \delta^{\tau+1}} \right)\norm{E}_\rho \nonumber \\
=: {} & \frac{\Creddd}{\gamma \delta^{\tau+1}} \norm{E}_\rho
 \,.
\label{eq:Creddd}
\end{align}
Then, we end up with
\begin{equation} \label{eq:CEred}
\norm{\Ered}_{\rho-2\delta} \leq 
\frac{\max \{
\Creduu \gamma \delta^\tau
\, , \,
\Creddu \gamma \delta^\tau + \Creddd \} }{\gamma \delta^{\tau+1}}
\norm{E}_\rho 
=: \frac{\Cred}{\gamma \delta^{\tau+1}} \norm{E}_\rho\,,
\end{equation}
thus completing the proof.
\end{proof}

\section{Proof of the ordinary KAM theorem}\label{sec:proof:KAM}

In the section we present a fully detailed proof of Theorem \ref{theo:KAM}.
For convenience, we will start by outlining the scheme used
to correct the parameterization of the torus. That is,
in Section \ref{ssec:qNewton} we discuss the approximate solution of
linearized equations in the symplectic frame constructed in Section 
\ref{ssec:symp}. This establishes a quasi-Newton method to
obtain a solution of the invariance equation. In Section
\ref{ssec:iter:lemmas} we produce quantitative estimates for
the objects obtained when performing one iteration of the
previous procedure. Finally, in Section
\ref{ssec:proof:KAM} we discuss the convergence of the quasi-Newton method.

\subsection{The quasi-Newton method}\label{ssec:qNewton}

As it is usual in the a-posteriori approach to KAM theory, the argument consists
in refining $K(\theta)$ by means of a quasi-Newton method. 
Let us consider the equations associated with the invariance error
\[
E(\theta) =  X_\H(K(\theta)) + \Lie{\omega}K(\theta)\,.
\]
Then, we obtain the new parameterization
$\bar K(\theta)= K(\theta)+\DeltaK(\theta)$
by considering the linearized equation
\begin{equation}\label{eq:lin1}
\Dif X_\H (K(\theta)) \DeltaK(\theta) + \Lie{\omega}\DeltaK(\theta) 
= - E(\theta) \,,
\end{equation}
If we obtain a good enough approximation of the solution
$\DeltaK(\theta)$
of \eqref{eq:lin1},
then $\bar K(\theta)$ provides a parameterization
of an approximately invariant torus of frequency $\omega$,
with a quadratic error in terms of $E(\theta)$. 

To face the linearized equation \eqref{eq:lin1}, 
we resort to the approximately symplectic
frame $P(\theta)$, defined on the full tangent
space, which has been characterized in Section \ref{sec:lemmas}
(see Lemma \ref{lem:sympl}).
In particular, we introduce the linear change
\begin{equation}\label{eq:choice:DK}
\DeltaK (\theta) = P(\theta) \xi(\theta) \,,
\end{equation}
where $\xi(\theta)$ is the new unknown.
Taking into account this expression, 
the linearized equation becomes
\begin{equation} \label{eq:lin:1E} 
\left( \Dif X_\H (K(\theta)) P(\theta) + \Lie{\omega}P(\theta) \right) \xi(\theta)
+P(\theta) \Lie{\omega} \xi(\theta) 
=   - E(\theta) \,, 
\end{equation}
We now multiply both sides of \eqref{eq:lin:1E} by $-\Omega_0 P(\theta)^\top
\Omega(K(\theta))$, and we 
use the
geometric properties in Lemma \ref{lem:sympl} and
Lemma \ref{lem:reduc}, thus obtaining the equivalent equations:
\begin{equation} \label{eq:lin:1Enew}
\begin{split}
\left(\Lambda(\theta)+\Ered(\theta) \right) \xi(\theta)
+ {} & \left( I_{2n} - \Omega_0 \Esym(\theta)  \right) \Lie{\omega} \xi(\theta) \\ 
= {} &
\Omega_0 P(\theta)^\top \Omega(K(\theta)) E(\theta) 
\,,
\end{split}
\end{equation}
where $\Lambda(\theta)$ is the triangular matrix-valued map given
in \eqref{eq:Lambda}.

Then, it turns out that the solution of \eqref{eq:lin:1Enew}
are approximated by the solutions of a triangular system
that requires to solve two cohomological equations of
the form \eqref{eq:calL} consecutively.
Quantitative estimates for the solutions of 
such equations are obtained by applying 
R\"ussmann estimates.
This is summarized
in the following standard statement.

\begin{lemma}[Upper triangular equations]\label{lem:upperT}
Let 
$\omega \in \Dioph{\gamma}{\tau}$
and let us consider a map
$\eta= (\eta^L,\eta^N) : \TT^d \to
\RR^{2n} \simeq \RR^n\times\RR^n$, with components in $\Anal(\TT^d_\rho)$, and a 
map $T : \TT^d \rightarrow \RR^{n\times n}$,
with components in $\Anal(\TT^d_{\rho-\delta})$.
Assume that $T$ satisfies the non-degeneracy condition $\det
\aver{T} \neq 0$ and $\eta$ satisfies the compatibility condition $\aver{\eta^N}=0_n$.
Then, for any $\xi^L_0\in \RR^n$, the system of equations
\begin{equation} \label{eq:lin:last1}
\begin{pmatrix}
O_n & T(\theta) \\
O_n & O_n
\end{pmatrix}
\begin{pmatrix}
\xi^L(\theta) \\
\xi^N(\theta)
\end{pmatrix}
+
\begin{pmatrix}
\Lie{\omega} \xi^L(\theta) \\
\Lie{\omega} \xi^N(\theta)
\end{pmatrix}
=
\begin{pmatrix}
\eta^L(\theta) \\
\eta^N(\theta)
\end{pmatrix}
\end{equation}
has a solution of the form
\begin{align*}
\xi^N(\theta)={} & \xi^N_0 + \R{\omega}(\eta^N(\theta)) \,, \\
\xi^L(\theta)={} & \xi^L_0 +\R{\omega}(\eta^L(\theta) - T(\theta)
\xi^N(\theta)) \,, 
\end{align*}
where
\[
\xi^N_0= \aver{T}^{-1} \aver{\eta^L-T \R{\omega}(\eta^N)} 
\]
and $\R{\omega}$ is given by \eqref{eq:small:formal}. 
Moreover, 
we have the estimates
\begin{align*}
& |\xi_0^N| \leq \Abs{\aver{T}^{-1}}
\Big(
\norm{\eta^L}_\rho
+ \frac{c_R}{\gamma \delta^\tau} \norm{T}_{\rho-\delta} \norm{\eta^N}_\rho
\Big)\, , \\
& \norm{\xi^N}_{\rho-\delta} \leq |\xi_0^N| + \frac{c_R}{\gamma \delta^\tau}
\norm{\eta^N}_\rho\, , \\
& \norm{\xi^L}_{\rho-2\delta} \leq |\xi_0^L| + \frac{c_R}{\gamma \delta^\tau}
\Big(
\norm{\eta^L}_{\rho-\delta} + 
\norm{T}_{\rho-\delta} \norm{\xi^N}_{\rho-\delta}
\Big)\, .
\end{align*}
\end{lemma}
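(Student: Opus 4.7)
The plan is to solve the system \eqref{eq:lin:last1} by reading it as two scalar-block cohomological equations and solving them in order. The lower block reads $\Lie{\omega}\xi^N(\theta)=\eta^N(\theta)$, which, by the compatibility assumption $\aver{\eta^N}=0_n$ and Lemma \ref{lem:Russmann} applied componentwise, admits the family of solutions $\xi^N(\theta)=\xi^N_0+\R{\omega}(\eta^N)(\theta)$, parametrized by the free average $\xi^N_0\in\RR^n$. Substituting this into the upper block yields
\[
\Lie{\omega}\xi^L(\theta)=\eta^L(\theta)-T(\theta)\xi^N_0-T(\theta)\R{\omega}(\eta^N)(\theta),
\]
which is again a cohomological equation for $\xi^L$. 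The compatibility condition to apply Lemma \ref{lem:Russmann} now reads
\[
\aver{T}\,\xi^N_0=\aver{\eta^L-T\,\R{\omega}(\eta^N)},
\]
and the non-degeneracy $\det\aver{T}\neq 0$ fixes $\xi^N_0$ uniquely by the claimed formula. Once $\xi^N_0$ is fixed, $\xi^L$ is determined up to an additive constant $\xi^L_0\in\RR^n$, yielding $\xi^L(\theta)=\xi^L_0+\R{\omega}\bigl(\eta^L-T\xi^N\bigr)(\theta)$.

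For the estimates, I would simply concatenate Banach-algebra bounds with R\"ussmann's estimate from Lemma \ref{lem:Russmann}. The bound on $\xi^N_0$ follows from applying $|\aver{\cdot}|\le\|\cdot\|_\rho$ to $\eta^L-T\R{\omega}(\eta^N)$ on the strip of width $\rho-\delta$ (where $T$ lives), combined with $\|\R{\omega}(\eta^N)\|_{\rho-\delta}\le (c_R/\gamma\delta^\tau)\|\eta^N\|_\rho$ and the submultiplicativity of the matrix norm. The bound on $\xi^N$ is immediate from $\xi^N=\xi^N_0+\R{\omega}(\eta^N)$ and R\"ussmann once more. Finally, the bound on $\xi^L$ comes from applying R\"ussmann to the last cohomological equation on $\TT^d_{\rho-\delta}\supset\TT^d_{\rho-2\delta}$, controlling the source term by $\|\eta^L\|_{\rho-\delta}+\|T\|_{\rho-\delta}\|\xi^N\|_{\rho-\delta}$.

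I do not foresee any real obstacle: the lemma is essentially a bookkeeping exercise that isolates the algebraic structure used in \eqref{eq:lin:1Enew} (the torsion block, the triangular form, and the small divisors). The only mild subtlety is making sure the domains of analyticity are tracked correctly: $T$ is only known in $\TT^d_{\rho-\delta}$, so one inevitably loses one $\delta$ in solving for $\xi^N$ and a second $\delta$ in solving for $\xi^L$, which accounts for the final $\rho-2\delta$ strip in the last estimate. Uniqueness of $\xi^N_0$ and the free parameter $\xi^L_0$ also match exactly the two indeterminacies of Remark \ref{rem:unicity}, so the statement is consistent with the global geometric picture.
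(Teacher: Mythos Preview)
Your proposal is correct and follows exactly the standard approach that the paper has in mind: it merely cites the result as classical (adapting Lemma~4.14 in \cite{HaroCFLM16}) and states that the estimates are directly obtained from Lemma~\ref{lem:Russmann}. Your write-up is in fact more detailed than the paper's own proof, and the domain-tracking you highlight (losing one $\delta$ per cohomological equation because $T$ lives only on $\TT^d_{\rho-\delta}$) is precisely the point.
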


\begin{proof}
This triangular structure is classic in KAM theory and appears in
any Kolmogorov scheme (see e.g. \cite{BennettinGGS84,Llave01,Kolmogorov54}).
The Lemma is directly adapted from Lemma 4.14 in \cite{HaroCFLM16} and the estimates
are directly obtained using Lemma \ref{lem:Russmann}.
\end{proof}

To approximate the solutions of \eqref{eq:lin:1Enew} we will
invoke Lemma \ref{lem:upperT} taking
\begin{equation}\label{eq:eta:corr}
\eta^L(\theta) = - N(\theta)^\top \Omega(K(\theta)) E(\theta)\,,
\qquad
\eta^N(\theta) = L(\theta)^\top \Omega(K(\theta)) E(\theta)\,,
\end{equation}
and $T(\theta)$ given by \eqref{eq:T}. We recall from Lemma \ref{lem:sympl}
that the compatibility condition $\aver{\eta^N}=0_n$ is satisfied.
Note that $\aver{\xi^N}=\xi^N_0$ and
we have the freedom of choosing any
value for $\aver{\xi^L}= \xi^L_0 \in \RR^n$.
For convenience, we will select later
the solution with  $\xi_0^L=0_n$, even though
other choices can be selected according to the context (see
Remark \ref{rem:unicity}).

From Lemma \ref{lem:upperT} we read that 
$\norm{\xi}_{\rho-2\delta}=\cO(\norm{E}_\rho)$ and,
using the geometric properties characterized in Section \ref{sec:lemmas},
we have $\norm{\Ered}_{\rho-2\delta}=\cO(\norm{E}_\rho)$ and $\norm{\Esym}_{\rho-2\delta}=\cO(\norm{E}_\rho)$.
From these estimates, we conclude that the solution of equation
\eqref{eq:lin:1Enew}
is approximated by the solution of
the cohomological equation
\begin{equation}\label{eq:mycohoxi}
\Lambda(\theta) \xi(\theta) + \Lie{\omega} \xi(\theta)=\eta(\theta)\,.
\end{equation}
This, together with other estimates, will be suitably quantified in the next section.

\subsection{One step of the iterative procedure}\label{ssec:iter:lemmas}

In this section we apply one correction of the quasi-Newton method
described in Section \ref{ssec:qNewton} and we obtain sharp quantitative
estimates
for the new approximately invariant torus and related objects. We
set sufficient conditions to preserve the control of the previous
estimates.

\begin{lemma} [The Iterative Lemma in the ordinary case] \label{lem:KAM:inter:integral}
Let us consider the same setting and hypotheses of Theorem \ref{theo:KAM},
and a constant $\cauxT>0$. 
Then, there exist constants
$\CDeltaK$, 
$\CDeltaB$, 
$\CDeltaTOI$ and
$\CE$
such that 
if the inequalities
\begin{equation}\label{eq:cond1:K:iter}
\frac{\hCDelta \norm{E}_\rho}{\gamma^2 \delta^{2\tau+1}} < 1
\qquad\qquad
\frac{\CE \norm{E}_\rho}{\gamma^4 \delta^{4\tau}} < 1
\end{equation}
hold for some $0<\delta< \rho$, where
\begin{equation}\label{eq:mathfrak1}
\begin{split}
\hCDelta := \max \bigg\{ & \frac{\gamma^2 \delta^{2\tau}}{\cauxT}
\, , \, 
2 \Csym \gamma \delta^{\tau} 
\, , \,
\frac{ d \CDeltaK }{\sigmaDK - \norm{\Dif K}_\rho} 
\, , \,
\frac{ 2n  \CDeltaK }{\sigmaDKT - \norm{(\Dif K)^\top}_\rho}
\, , \,
\\
&  \frac{\CDeltaB}{\sigmaB - \norm{B}_\rho} 
\, , \,
 \frac{\CDeltaTOI }{\sigmaT - \abs{\aver{T}^{-1}}} 
\, , \,
 \frac{\CDeltaK \delta}{\dist (K(\TT^d_\rho),\partial B)}
 \bigg\} \,,
\end{split}
\end{equation}
then we have an approximate torus of the same frequency $\omega$
given by $\bar K=K+\DeltaK$, with components in $\Anal(\TT^d_{\rho-2\delta})$, that defines new objects $\bar B$
and $\bar T$ (obtained replacing $K$ by $\bar K$) satisfying
\begin{align}
& \norm{\Dif \bar K}_{\rho-3\delta} < \sigmaDK \,, \label{eq:DK:iter1} \\
& \norm{(\Dif \bar K)^\top}_{\rho-3\delta} < \sigmaDKT \,, \label{eq:DKT:iter1} \\
& \norm{\bar B}_{\rho-3\delta} < \sigmaB \,, \label{eq:B:iter1} \\
& \abs{\aver{\bar T}^{-1}} < \sigmaT \,, \label{eq:T:iter1} \\
& \dist(\bar K(\TT^d_{\rho-2\delta}),\partial \B) >0 \,, \label{eq:distB:iter1} 
\end{align}
and
\begin{align}
& \norm{\bar K-K}_{\rho-2 \delta} < \frac{\CDeltaK}{\gamma^2 \delta^{2\tau}}
\norm{E}_\rho\,, \label{eq:est:DeltaK} \\
& \norm{\bar B-B}_{\rho-3\delta} < \frac{\CDeltaB}{\gamma^2 \delta^{2 \tau+1}}
\norm{E}_\rho\,, \label{eq:est:DeltaB} \\
& \abs{\aver{\bar T}^{-1}-\aver{T}^{-1} } < \frac{\CDeltaTOI}{\gamma^2 \delta^{2 \tau+1}}
\norm{E}_\rho\,, \label{eq:est:DeltaT}
\end{align}
The new error of invariance is given by
\[
\bar E(\theta) =  X_\H (\bar K(\theta)) + \Lie{\omega} \bar K(\theta) \,,
\]
and satisfies
\begin{equation}\label{eq:E:iter1}
\norm{\bar E}_{\rho-2\delta} < \frac{\CE}{\gamma^4
\delta^{4\tau}}\norm{E}_\rho^2\,.
\end{equation}
The above constants are collected in Table~\ref{tab:constants:all:2}.
\end{lemma}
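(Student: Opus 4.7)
The approach is to execute one step of the quasi-Newton scheme from Section \ref{ssec:qNewton} and then quantitatively control all resulting objects using the geometric lemmas of Section \ref{sec:lemmas}. Concretely, I set $\Delta K(\theta) = P(\theta)\xi(\theta)$, where $\xi=(\xi^L,\xi^N)$ solves the triangular system provided by Lemma~\ref{lem:upperT} with right-hand side \eqref{eq:eta:corr}. The compatibility condition $\aver{\eta^N}=0_n$ was verified in Lemma~\ref{lem:Lang}, so the system is solvable; I fix the free constant $\xi_0^L=0_n$. Combining the R\"ussmann estimates from Lemma~\ref{lem:upperT} with the bounds $\norm{L}_\rho\leq\CL$, $\norm{N}_\rho\leq\CN$, $\norm{\Omega}_\B\leq\cteOmega$, $\norm{T}_{\rho-\delta}\leq\CT$ (Lemmas~\ref{lem:Lang}, \ref{lem:sympl}, \ref{lem:twist}) and the hypothesis $\abs{\aver{T}^{-1}}<\sigmaT$ yields $\norm{\xi}_{\rho-2\delta}=O(\norm{E}_\rho/(\gamma^2\delta^{2\tau}))$, and then $\norm{\Delta K}_{\rho-2\delta}\leq\norm{P}_\rho\norm{\xi}_{\rho-2\delta}$ defines $\CDeltaK$ and yields \eqref{eq:est:DeltaK}.

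Next I verify that $\bar K=K+\Delta K$ inherits the hypotheses. The last entry of \eqref{eq:mathfrak1} guarantees $\bar K(\TT^d_{\rho-2\delta})\subset\B$, giving \eqref{eq:distB:iter1}. Cauchy estimates give $\norm{\Dif \Delta K}_{\rho-3\delta}\leq (d/\delta)\norm{\Delta K}_{\rho-2\delta}$, so the third and fourth entries of \eqref{eq:mathfrak1} yield \eqref{eq:DK:iter1}--\eqref{eq:DKT:iter1}. To obtain \eqref{eq:est:DeltaB} I expand $\bar B^{-1}-B^{-1}$ using the chain rule applied to $G_L$ along $\bar K$, which is linear in $\Delta K$ and $\Dif\Delta K$ up to higher order; a Neumann-series argument combined with $\norm{B}_\rho<\sigmaB$ and the fifth entry of \eqref{eq:mathfrak1} then gives \eqref{eq:B:iter1}. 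The estimate \eqref{eq:est:DeltaT} follows analogously by expanding $\aver{\bar T}-\aver{T}$ through the defining formula \eqref{eq:T} and inverting via the identity $\aver{\bar T}^{-1}-\aver{T}^{-1}=-\aver{\bar T}^{-1}(\aver{\bar T}-\aver{T})\aver{T}^{-1}$.

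The heart of the proof is the quadratic bound on the new error. Writing
\[
\bar E(\theta) = E(\theta) + \Dif X_\H(K(\theta))\Delta K(\theta) + \Lie{\omega}\Delta K(\theta) + R_X(\theta),
\]
with Taylor remainder $\norm{R_X}_{\rho-2\delta}\leq \tfrac12\cteDDXH\,\norm{\Delta K}_{\rho-2\delta}^2$, I substitute $\Delta K = P\xi$ and multiply by $-\Omega_0 P^\top(\Omega\circ K)$ to obtain
\[
(\Lambda+\Ered)\xi + (I_{2n}-\Omega_0\Esym)\Lie{\omega}\xi = \Omega_0 P^\top(\Omega\circ K)E,
\]
by Lemmas~\ref{lem:sympl} and \ref{lem:reduc}. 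Since $\xi$ was constructed to solve the genuine triangular equation $\Lambda\xi+\Lie{\omega}\xi=\eta$ (which is exactly the same system with $\Ered,\Esym$ set to zero), the true residual of the linearized equation equals $-\Ered\xi+\Omega_0\Esym\Lie{\omega}\xi$. Each factor $\Ered,\Esym$ contributes one power of $\norm{E}_\rho/(\gamma\delta^{\tau+1})$ and each of $\xi,\Lie{\omega}\xi$ one power of $\norm{E}_\rho/(\gamma^2\delta^{2\tau+1})$, so together they are $O(\norm{E}_\rho^2/(\gamma^3\delta^{3\tau+2}))$. Combining with $R_X=O(\norm{E}_\rho^2/(\gamma^4\delta^{4\tau}))$ and undoing the multiplication by $-\Omega_0 P^\top(\Omega\circ K)$, I arrive at \eqref{eq:E:iter1}.

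The main technical obstacle is bookkeeping. Three successive strip contractions $\rho\to\rho-\delta\to\rho-2\delta\to\rho-3\delta$ must be allocated carefully so that each Cauchy estimate and each R\"ussmann inversion lands on an object already controlled at the appropriate strip, and so that the dominant exponents in $\delta$ match the claimed $\delta^{4\tau}$ in \eqref{eq:E:iter1}. A secondary subtlety is that the torsion bound from Lemma~\ref{lem:twist} requires the auxiliary smallness \eqref{eq:fake:cond}; this is absorbed into $\hCDelta$ via the first entry $\gamma^2\delta^{2\tau}/\cauxT$ of \eqref{eq:mathfrak1}. Finally, all constants need to be assembled into the explicit expressions tabulated in Appendix~\ref{ssec:consts}, which is mechanical but lengthy.
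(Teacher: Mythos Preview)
Your proposal is correct and follows essentially the same approach as the paper: construct $\Delta K=P\xi$ via Lemma~\ref{lem:upperT} with data~\eqref{eq:eta:corr}, bound $\xi$ and hence $\Delta K$, verify the hypotheses transfer to $\bar K$ via the entries of~\eqref{eq:mathfrak1}, and obtain the quadratic error estimate by combining the Taylor remainder with the defect $\Ered\xi-\Omega_0\Esym\Lie{\omega}\xi$ of the approximate reduction.

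Two minor points of comparison. First, your exponent bookkeeping is slightly off: the paper controls $\Lie{\omega}\xi$ directly from the cohomological equation $\Lie{\omega}\xi=\eta-\Lambda\xi$ (giving $\norm{\Lie{\omega}\xi}_{\rho-\delta}\leq\CLiexi\norm{E}_\rho/(\gamma\delta^\tau)$) rather than via a Cauchy estimate on $\xi$, and ``undoing'' the multiplication by $-\Omega_0P^\top(\Omega\circ K)$ is done through $P(I_{2n}-\Omega_0\Esym)^{-1}$ and a Neumann series (this is precisely what the second entry $2\Csym\gamma\delta^\tau$ of~\eqref{eq:mathfrak1} is for). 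Second, the paper's control of $\aver{\bar T}-\aver{T}$ is considerably more elaborate than you indicate: rather than applying Cauchy estimates to $\Lie{\omega}\bar N-\Lie{\omega}N$, the paper tracks $\Lie{\omega}$ explicitly through every intermediate object ($\Lie{\omega}\Delta K$, $\Lie{\omega}\bar L-\Lie{\omega}L$, $\Lie{\omega}G_{\bar L}-\Lie{\omega}G_L$, etc.) in its Steps~6--7, which is what produces the sharp constants in Table~\ref{tab:constants:all:2}. Your version would work but with a worse $\CDeltaTOI$.
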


\begin{proof} 
This result requires rather cumbersome computations, so we divide the proof
into several steps.

\bigskip


\paragraph
{\emph{Step 1:
Control of the new parameterization}.}
We start by considering the new parameterization $\bar K (\theta)= K(\theta) + \DeltaK(\theta)$ obtained 
from the 
system \eqref{eq:lin:last1}, 
with $\eta(\theta)$ given by \eqref{eq:eta:corr}. We choose the solution that satisfies $\xi_0^L=0$.
Using the estimates obtained in Section \ref{sec:lemmas} we have
\[
\norm{\eta^L}_\rho \leq \CNT \cteOmega \norm{E}_\rho\,,
\qquad
\norm{\eta^N}_\rho \leq \CLT \cteOmega \norm{E}_\rho\,.
\]
In order to invoke Lemma \ref{lem:twist}
(we must fulfill condition \eqref{eq:fake:cond})
we have included the inequality
\begin{equation}\label{eq:ingredient:iter:1}
\frac{\norm{E}_\rho}{\delta} < \cauxT
\end{equation}
into Hypothesis \eqref{eq:cond1:K:iter} (this corresponds to the first term in \eqref{eq:mathfrak1}).
Hence, combining 
Lemma \ref{lem:twist}
and 
Lemma \ref{lem:upperT},
we obtain estimates for the solution
of the cohomological equations
(we recall that $\xi_0^L=0_n$)
\begin{align}
& \abs{\xi^N_0} \leq \abs{\aver{T}^{-1}}
\Big(
\norm{\eta^L}_{\rho} + \frac{c_R}{\gamma \delta^\tau} \norm{T}_{\rho-\delta} \norm{\eta^N}_\rho
\Big) \nonumber \\
& \qquad \leq \sigmaT \Big(
\CNT \cteOmega + \frac{c_R}{\gamma \delta^\tau} \CT \CLT \cteOmega
\Big)
\norm{E}_\rho =: \frac{\CxiNO}{\gamma \delta^\tau} \norm{E}_\rho\,, \label{eq:CxiNO} \\
& \norm{\xi^N}_{\rho-\delta} \leq \abs{\xi^N_0}+ \frac{c_R}{\gamma \delta^\tau} \norm{\eta^N}_\rho \nonumber \\
& \qquad \leq 
\frac{\CxiNO}{\gamma \delta^\tau} \norm{E}_\rho 
+ \frac{c_R}{\gamma \delta^\tau}\CLT \cteOmega \norm{E}_\rho
 =: 
\frac{\CxiN}{\gamma \delta^\tau} \norm{E}_\rho \,, \label{eq:CxiN} \\
& \norm{\xi^L}_{\rho-2\delta} \leq \abs{\xi_0^L} + \frac{c_R}{\gamma \delta^\tau}
\Big(
\norm{\eta^L}_{\rho} + \norm{T}_{\rho-\delta} \norm{\xi^N}_{\rho-\delta}
\Big) \nonumber \\
& \qquad \leq \frac{c_R}{\gamma \delta^\tau} \Big(
\CNT \cteOmega + \CT \frac{\CxiN}{\gamma \delta^\tau}
\Big)
\norm{E}_\rho =: \frac{\CxiL}{\gamma^2 \delta^{2\tau}} \norm{E}_\rho\,. \label{eq:CxiL} 
\end{align}
The norm of the full vector $\xi(\theta)$, which satisfies \eqref{eq:mycohoxi}, is controlled as
\begin{align}
\norm{\xi}_{\rho-2\delta} \leq {} &
\max\{ \norm{\xi^L}_{\rho-2\delta}\, , \, \norm{\xi^N}_{\rho-\delta}\} \nonumber \\
\leq {} &
\frac{\max\{\CxiL \, , \, \CxiN  \gamma \delta^\tau\}}{\gamma^2 \delta^{2\tau}} \norm{E}_\rho
=: \frac{\Cxi}{\gamma^2 \delta^{2\tau}} \norm{E}_\rho\,. \label{eq:Cxi}
\end{align}

The new parameterization $\bar K(\theta)$ and the related objects are
controlled using standard computations. Estimate \eqref{eq:est:DeltaK} follows
directly from 
\[
\bar K(\theta)-K(\theta) = \DeltaK(\theta) = P(\theta) \xi(\theta) = L(\theta) \xi^L(\theta) + N (\theta) \xi^N(\theta)\,,
\]
that is, using estimates in \eqref{eq:CxiN} and \eqref{eq:CxiL},
we obtain
\begin{equation}\label{eq:CDeltaK}
\norm{\bar K - K}_{\rho-2\delta} 
\leq \frac{\CL \CxiL + \CN \CxiN \gamma \delta^\tau}{\gamma^2 \delta^{2\tau}} \norm{E}_\rho =:
\frac{\CDeltaK}{\gamma^2 \delta^{2\tau}} \norm{E}_\rho\,.
\end{equation}

To complete this step, we check that $\bar K(\theta)$ remains inside the
domain $\B$ where the global objects are defined. This is important
because we need to estimate the new error $\bar E(\theta)$ before
controlling the remaining geometrical objects
For this, we observe that 
\begin{align}
\dist(\bar K(\TT^d_{\rho-2\delta}),\partial \B) \geq {} &  \dist(K(\TT^d_\rho),\partial \B) - \norm{\DeltaK}_{\rho-2\delta} \nonumber \\
\geq {} &  \dist(K(\TT^d_\rho),\partial \B) - \frac{\CDeltaK}{\gamma^2 \delta^{2\tau}} \norm{E}_\rho>0\,, \label{eq:ingredient:iter:2}
\end{align}
where the last inequality follows from
Hypothesis \eqref{eq:cond1:K:iter} (this corresponds to the seventh term in \eqref{eq:mathfrak1}).
We have obtained the control in \eqref{eq:distB:iter1}.
\bigskip


\paragraph
{\emph{Step 2: Control of the new error of invariance}.}

To control the error of invariance of the corrected parameterization $\bar K$, 
we first consider the error in the solution of the linearized equation 
\eqref{eq:lin:1Enew}, that is, we control the quadratic terms
that are neglected when considering the equation \eqref{eq:mycohoxi}:
\[
\Elin (\theta) = \Ered(\theta) \xi(\theta) - \Omega_0 \Esym(\theta) \Lie{\omega}\xi(\theta)\,.
\]
The term $\Lie{\omega} \xi(\theta)$ is controlled using that $\xi(\theta)$ is precisely the solution
of the cohomological equation \eqref{eq:mycohoxi} :
\begin{align}
\norm{\Lie{\omega} \xi^N}_{\rho}
= {} & 
\norm{\eta^N}_{\rho}
\leq \CLT \cteOmega \norm{E}_\rho =: \CLiexiN \norm{E}_\rho \,,
\label{eq:CLiexiN}
\\
\norm{\Lie{\omega} \xi^L}_{\rho-\delta}
={} &
\norm{\eta^L - T \xi^N}_{\rho-\delta}
\leq \left(\CNT \cteOmega + \CT \frac{\CxiN}{\gamma \delta^\tau}\right)\norm{E}_\rho \nonumber \\
=: {} & \frac{\CLiexiL}{\gamma \delta^{\tau}} \norm{E}_\rho \,,
 \label{eq:CLiexiL} \\
\norm{\Lie{\omega} \xi}_{\rho-\delta} 
 \leq {} & \max\left(\frac{\CLiexiL}{\gamma \delta^{\tau}}, \CLiexiN \right) \norm{E}_\rho =: \frac{\CLiexi}{\gamma \delta^{\tau}} \norm{E}_\rho\,.
 \label{eq:CLiexi}
\end{align}

Hence, we control $\Elin(\theta)$ by 
\begin{align}
\norm{\Elin}_{\rho-2\delta} \leq {} & 
\norm{\Ered}_{\rho-2\delta} \norm{\xi}_{\rho-2\delta} +
\cteOmega \norm{\Esym}_{\rho-2\delta} \norm{\Lie{\omega}\xi}_{\rho-2 \delta} \nonumber \\
\leq {} &
\frac{\Cred}{\gamma \delta^{\tau+1}}
\frac{\Cxi}{\gamma^2 \delta^{2 \tau}}
\norm{E}_\rho^2
+
\cteOmega
\frac{\Csym}{\gamma \delta^{\tau+1}}
\frac{\CLiexi}{\gamma \delta^{\tau}}
\norm{E}_\rho^2
=: \frac{\Clin}{\gamma^3 \delta^{3\tau+1}} \norm{E}_\rho^2\,.
\label{eq:Clin}
\end{align}
We remark that this last estimate can be improved
by considering the components of $\xi(\theta)=(\xi^L(\theta),\xi^N(\theta))$
separately, thus obtaining a divisor $\gamma^2 \delta^{2\tau+1}$ in \eqref{eq:Clin}. 
Nevertheless, this improvement is irrelevant for practical purposes.

After performing the correction, the error of invariance associated with
the new parameterization is given by
\begin{equation}\label{eq:new:E:comp}
\begin{split}
\bar E(\theta) = {} & X_\H(K(\theta)+\DeltaK(\theta)) + \Lie{\omega}K(\theta) + \Lie{\omega}\DeltaK(\theta) \\
= {} & X_{\H}(K(\theta))+\Dif X_\H (K(\theta)) \DeltaK(\theta) + \Lie{\omega}K(\theta) + \Lie{\omega}\DeltaK(\theta)
+\Delta^2 X(\theta) \\
= {} & \Dif X_\H (K(\theta)) \DeltaK(\theta) + \Lie{\omega}\DeltaK(\theta) + E(\theta)
+\Delta^2 X(\theta) \\
= {} & \left( \Dif X_\H (K(\theta)) P(\theta) + \Lie{\omega}P(\theta) \right) \xi(\theta)
+P(\theta) \Lie{\omega} \xi(\theta) +  E(\theta) + \Delta^2X(\theta) \\
= {} & (-\Omega_0 P(\theta)^\top \Omega(K(\theta)))^{-1} \Elin(\theta) + \Delta^2X(\theta) \\
= {} & P(\theta) (I_{2n}-\Omega_0 \Esym(\theta))^{-1} \Elin(\theta)+\Delta^2 X(\theta)\,,
\end{split}
\end{equation}
where
\begin{equation}\label{eq:Delta2X}
\begin{split}
\Delta^2 X(\theta) = {} & X_\H(K(\theta)+\DeltaK(\theta))-X_\H(K(\theta))-\Dif X_\H(K(\theta)) \DeltaK(\theta) \\
= {} & \int_0^1 (1-t) \Dif^2 X_\H(K(\theta)+t \DeltaK(\theta)) [\DeltaK(\theta),\DeltaK(\theta)] \dif t\,,
\end{split}
\end{equation}
and we used \eqref{eq:Esym}.
Notice that the above error function is well defined, due to the computations
in \eqref{eq:ingredient:iter:2}, and we
estimate its norm as follows
\[
\norm{\bar E}_{\rho-2\delta} \leq \norm{P}_{\rho-2\delta} \norm{(I-\Omega_0 \Esym)^{-1}}_{\rho-2\delta} \norm{\Elin}_{\rho-2\delta}+\norm{\Delta^2 X}_{\rho-2\delta}\,.
\]
Then, using a Neumann series argument, we obtain
\[
 \norm{(I-\Omega_0 \Esym)^{-1}}_{\rho-2\delta} \leq \frac{1}{1-\norm{\Omega_0 \Esym}_{\rho-2\delta}} <2\,,
\]
where we used the inequality
\[
\frac{\Csym}{\gamma \delta^{\tau+1}}\norm{E}_\rho \leq \frac{1}{2}\ ,
\]
that corresponds to the second term in \eqref{eq:mathfrak1}
(Hypothesis \eqref{eq:cond1:K:iter}).
Putting together the above estimates, and applying the mean value theorem to control $\Delta^2 X(\theta)$, we obtain
\begin{equation}\label{eq:CE}
\norm{\bar E}_{\rho-2\delta} \leq \left(\frac{ 2(\CL+\CN)\Clin}{\gamma^3 \delta^{3\tau+1}} + \frac{1}{2} \cteDDXH \frac{(\CDeltaK)^2}{\gamma^4 \delta^{4\tau}} \right)
\norm{E}_\rho^2 =:
\frac{\CE}{\gamma^4 \delta^{4\tau}} \norm{E}_\rho^2\,.
\end{equation}
We have obtained the estimate \eqref{eq:E:iter1}.
Notice that the second assumption in \eqref{eq:cond1:K:iter} 
and \eqref{eq:ingredient:iter:1}
imply that
\begin{equation}\label{eq:barEvsE}
\norm{\bar E}_{\rho-2\delta} < \norm{E}_\rho< \delta \cauxT\,.
\end{equation}
This will be used in Step 6.

\bigskip


\paragraph
{\emph{Step 3: Control of the new frame $L(\theta)$}.}

Combining \eqref{eq:CDeltaK} with Cauchy estimates, we obtain the control \eqref{eq:DK:iter1}:
\begin{equation} \label{eq:ingredient:iter:4}
\norm{\Dif \bar K}_{\rho-3\delta} \leq 
\norm{\Dif K}_{\rho} +
\norm{\Dif \DeltaK}_{\rho-3\delta} \leq 
\norm{\Dif K}_{\rho} +
\frac{d \CDeltaK}{\gamma^2 \delta^{2\tau+1}} \norm{E}_\rho < \sigmaDK \,,
\end{equation}
where the last inequality follows from
Hypothesis \eqref{eq:cond1:K:iter} (this corresponds to the third term in \eqref{eq:mathfrak1}).
The control \eqref{eq:DKT:iter1}
on the transposed object is analogous
\begin{equation} \label{eq:ingredient:iter:5}
\norm{(\Dif \bar K)^\top}_{\rho-3\delta} \leq 
\norm{(\Dif K)^\top}_{\rho} +
\frac{2n \CDeltaK}{\gamma^2 \delta^{2\tau+1}} \norm{E}_\rho < \sigmaDKT \,,
\end{equation}
where the last inequality follows from
Hypothesis \eqref{eq:cond1:K:iter} (this corresponds to the fourth term in \eqref{eq:mathfrak1}).

After obtaining the estimates \eqref{eq:ingredient:iter:4} and \eqref{eq:ingredient:iter:5},
it is clear that
\[
\norm{\bar L}_{\rho-3\delta} \leq \CL\,,
\qquad
\norm{\bar L^\top}_{\rho-3\delta} \leq \CLT\,.
\]
Indeed, we can control the norm of the corresponding corrections
using Cauchy estimates, the mean value theorem and estimate \eqref{eq:CDeltaK}:
\begin{align}
\norm{\bar L-L}_{\rho-3 \delta} \leq {} & 
\frac{d \CDeltaK}{\gamma^2 \delta^{2\tau+1}} \norm{E}_\rho 
+  
\frac{\cteDXp \CDeltaK}{\gamma^2 \delta^{2\tau}} \norm{E}_\rho =: \frac{\CDeltaL}{\gamma^2 \delta^{2\tau+1}} \norm{E}_\rho\,, \label{eq:CDeltaL} \\
\norm{\bar L^\top-L^\top}_{\rho-3 \delta} \leq {} & 
\frac{\CDeltaK \max\{ 2n \, , \, \cteDXpT \delta\}}{\gamma^2 \delta^{2\tau+1}} \norm{E}_\rho 
=: \frac{\CDeltaLT}{\gamma^2 \delta^{2\tau+1}} \norm{E}_\rho\,. \label{eq:CDeltaLT}
\end{align}

\bigskip


\paragraph
{\emph{Step 4: Control of the new transversality condition}.}
To control $\bar B$ we use Lemma \ref{lem:aux} taking
\begin{align*}
M(\theta) = {} & G_L(\theta) = L(\theta)^\top G(K(\theta)) L(\theta) \,,\\
\bar M(\theta) = {} & G_{\bar L}(\theta) = \bar L(\theta)^\top G(\bar K(\theta)) \bar L(\theta) \,,
\end{align*}
where we have used
the notation introduced in \eqref{eq:def:GL}.
First, we compute
\begin{align}
\norm{G \circ \bar K-G \circ K}_{\rho-2\delta} \leq {} & \norm{\Dif G}_{\B} \norm{\bar K - K}_{\rho-2\delta}
\leq \frac{\cteDG \CDeltaK}{\gamma^2 \delta^{2\tau}} \norm{E}_\rho \nonumber \\
=: {} & \frac{\CDeltaG}{\gamma^2 \delta^{2\tau}} \norm{E}_\rho\,, \label{eq:CDeltaG}
\end{align}
and
\begin{align}
\norm{G_{\bar L}-G_L}_{\rho-3\delta} 
\leq {} & \norm{\bar L^\top (G \circ \bar K) \bar L - \bar L^\top (G \circ \bar K) L}_{\rho-3\delta}  \nonumber\\
        & +\norm{\bar L^\top (G \circ \bar K) L - \bar L^\top (G \circ K) L}_{\rho-3\delta} \nonumber \\
        &+\norm{\bar L^\top (G \circ K) L - L^\top (G \circ K) L}_{\rho-3\delta} \nonumber \\
\leq {} & \norm{\bar L^\top}_\rho \norm{G}_\B \norm{\bar L- L}_{\rho-3\delta}  \nonumber\\
        & + \norm{\bar L^\top}_\rho \norm{G \circ \bar K-G \circ K}_{\rho-3\delta} \norm{L}_\rho \nonumber \\
        &+\norm{\bar L^\top - L^\top}_{\rho-3\delta}\norm{G}_\B \norm{L}_\rho \nonumber \\ 
\leq {} & \frac{\CLT \cteG \CDeltaL + \CLT \CDeltaG \CL \delta + \CDeltaLT \cteG \CL }{\gamma^2 \delta^{2\tau+1}} \norm{E}_\rho \nonumber \\
  =: {} & \frac{\CDeltaGL}{\gamma^2 \delta^{2\tau+1}} \norm{E}_\rho \,. \label{eq:CDeltaGL} 
\end{align}
Then, we introduce the constant
\[
\CDeltaB := 2(\sigmaB)^2 \CDeltaGL
\]
and check the condition \eqref{eq:lem:aux} in Lemma \ref{lem:aux}:
\begin{align}
\frac{2 (\sigmaB)^2 \norm{G_{\bar L}-G_L}_{\rho-3\delta}}{\sigmaB-\norm{B}_\rho} \leq {} &
\frac{2 (\sigmaB)^2 \CDeltaGL}{\sigmaB-\norm{B}_\rho} \frac{\norm{E}_\rho}{\gamma^2 \delta^{2\tau+1}} 
\nonumber \\
= {} & 
\frac{\CDeltaB}{\sigmaB-\norm{B}_\rho} \frac{\norm{E}_\rho}{\gamma^2 \delta^{2\tau+1}}
< 1\,,
\label{eq:ingredient:iter:6}
\end{align}
where the last inequality follows from
Hypothesis \eqref{eq:cond1:K:iter} (this corresponds to the fifth term in \eqref{eq:mathfrak1}).
Hence, by invoking Lemma \ref{lem:aux}, we conclude that
\begin{equation}\label{eq:CDeltaB}
\norm{\bar B}_{\rho-3\delta} < \sigmaB\,, \qquad
\norm{\bar B-B}_{\rho-3\delta} \leq \frac{2 (\sigmaB)^2 \CDeltaGL}{\gamma^2 \delta^{2\tau+1}}
\norm{E}_\rho 
= \frac{\CDeltaB}{\gamma^2 \delta^{2\tau+1}}
\norm{E}_\rho\,,
\end{equation}
and so, we obtain the estimates \eqref{eq:B:iter1} and \eqref{eq:est:DeltaB} on the new object.

\bigskip


\paragraph
{\emph{Step 5: Control of the new frame $N(\theta)$}.}
To control the new adapted normal frame $\bar N(\theta)$, it is convenient to
recall the following notation:
\begin{align*}
N(\theta) = {} &  L(\theta) A(\theta) + N^0(\theta) B(\theta)\,, \\
N^0(\theta) = {} & J(K(\theta)) L(\theta) \,,\\
A(\theta) = {} & -\tfrac{1}{2} (B(\theta)^\top L(\theta)^\top \tilde \Omega(K(\theta))
L(\theta) B(\theta)) \,, \\
B(\theta) = {} & (L(\theta)^\top G(K(\theta)) L(\theta))^{-1}\,,
\end{align*}
where, as usual, the new objects $\bar N(\theta)$, $\bar A(\theta)$, $\bar N^0(\theta)$ and $\bar B(\theta)$
are obtained by replacing $K(\theta)$ by $\bar K(\theta)$. Note that the object
$\bar B(\theta)$ has been controlled in Step~4.

Now, we recall the notation introduced in \eqref{eq:def:tOmegaL} and
reproduce the computations in \eqref{eq:CDeltaG} and \eqref{eq:CDeltaGL} for the matrix functions 
\begin{align*}
\tilde \Omega_L(\theta) = {} & L(\theta)^\top \tilde \Omega(K(\theta)) L(\theta) \,,\\
\tilde \Omega_{\bar L}(\theta) = {} & \bar L(\theta)^\top \tilde  \Omega(\bar K(\theta)) \bar L(\theta) \,,
\end{align*}
thus obtaining
\begin {equation} \label{eq:CDeltatOmega}
\norm{\tilde \Omega \circ \bar K-\tilde \Omega \circ K}_{\rho -2\delta} \leq \frac{\cteDtOmega \CDeltaK}{\gamma^2 \delta^{2\tau}} \norm{E}_\rho =:\frac{\CDeltatOmega}{\gamma^2 \delta^{2\tau}} \norm{E}_\rho \,,
\end{equation}
and
\begin{align}
\norm{\tilde \Omega_{\bar L}-\tilde \Omega_L}_{\rho-3\delta} 
\leq {} & \frac{\CLT \ctetOmega \CDeltaL + \CLT \CDeltatOmega \CL \delta + \CDeltaLT \ctetOmega \CL }{\gamma^2 \delta^{2\tau+1}} \norm{E}_\rho \nonumber \\
  =: {} & \frac{\CDeltatOmegaL}{\gamma^2 \delta^{2\tau+1}} \norm{E}_\rho \,. \label{eq:CDeltatOmegaL}
\end{align}

Now, we control the matrix $\bar A(\theta)$ as follows
\begin{align}
\norm{\bar A-A}_{\rho-3\delta} \leq {} &
\frac{1}{2}
\norm{
\bar B^\top \tilde \Omega_{\bar L} \bar B
- \bar B^\top \tilde \Omega_{\bar L} B
}_{\rho-3\delta} \nonumber \\
& + \frac{1}{2}
\norm{
\bar B^\top \tilde \Omega_{\bar L} B
- \bar B^\top \tilde \Omega_{L} B
}_{\rho-3\delta} + \frac{1}{2}
\norm{
\bar B^\top \tilde \Omega_{L} B
- B^\top \tilde \Omega_{L} B
}_{\rho-3\delta} \nonumber \\
\leq {} &
\sigmaB \CtOmegaL \norm{\bar B-B}_{\rho-3 \delta}
+\frac{1}{2} (\sigmaB)^2 \norm{\tilde \Omega_{\bar L}-\tilde \Omega_L}_{\rho-3 \delta} \nonumber \\
\leq {} & \frac{\sigmaB \CtOmegaL \CDeltaB + \frac{1}{2} (\sigmaB)^2 \CDeltatOmegaL}{\gamma^2 \delta^{2\tau+1}} \norm{E}_\rho
=: \frac{\CDeltaA}{\gamma^2 \delta^{2\tau+1}}\norm{E}_\rho \,, \label{eq:CDeltaA}
\end{align}
where we used that $B(\theta)^\top=B(\theta)$, and the constants \eqref{eq:CtOmegaL}, \eqref{eq:CDeltaB}
and \eqref{eq:CDeltatOmegaL}.
The same control holds for
$\norm{\bar A^\top-A^\top}_{\rho-3\delta}$, since we have that $A(\theta)^\top=-A(\theta)$.
We notice that $\CDeltaA=0$ in \textbf{Case III}.

Analogous computations yield to
\begin{align}
\norm{J \circ \bar K-J \circ K}_{\rho -2\delta} \leq {} & \frac{\cteDJ \CDeltaK}{\gamma^2 \delta^{2\tau}} \norm{E}_\rho =:\frac{\CDeltaJ}{\gamma^2 \delta^{2\tau}} \norm{E}_\rho \,, \label{eq:CDeltaJ} \\
\norm{(J \circ \bar K-J \circ K)^\top}_{\rho -2\delta} \leq {} & \frac{\cteDJT \CDeltaK}{\gamma^2 \delta^{2\tau}} \norm{E}_\rho =:\frac{\CDeltaJT}{\gamma^2 \delta^{2\tau}} \norm{E}_\rho \,, \label{eq:CDeltaJT} \\
\norm{\bar N^0-N^0}_{\rho-3\delta} \leq {} &
\norm{J}_\B \norm{\bar L-L}_{\rho-3\delta} + \norm{J \circ \bar K- J \circ K}_{\rho-2 \delta} \norm{L}_\rho \nonumber \\
\leq {} & \frac{\cteJ \CDeltaL + \CDeltaJ \CL \delta}{\gamma^2 \delta^{2\tau+1}} \norm{E}_\rho \nonumber \\
=: {} &  \frac{\CDeltaNO}{\gamma^2 \delta^{2\tau+1}} \norm{E}_\rho \,, \label{eq:CDeltaNO} \\
\norm{(\bar N^0)^\top-(N^0)^\top}_{\rho-3\delta} 
\leq {} & \frac{\CDeltaLT \cteJT + \CLT \CDeltaJT \delta }{\gamma^2 \delta^{2\tau+1}} \norm{E}_\rho \nonumber \\
=: {} & \frac{\CDeltaNOT}{\gamma^2 \delta^{2\tau+1}} \norm{E}_\rho \,. \label{eq:CDeltaNOT}
\end{align}

Finally, we control the correction of the adapted normal frame:
\begin{align}
\|\bar N-N &\|_{\rho-3\delta} \nonumber \\
& \leq \norm{\bar L \bar A-\bar L A}_{\rho-3 \delta}+
\norm{\bar L A- L A}_{\rho-3 \delta} \nonumber \\
& \hphantom{\leq}+ \norm{\bar N^0 \bar B-\bar N^0 B}_{\rho-3 \delta}+
\norm{\bar N^0 B- N^0 B}_{\rho-3 \delta} \nonumber \\
& \leq \frac{\CL \CDeltaA + \CDeltaL \CA + \CNO \CDeltaB + \CDeltaNO \sigmaB}{\gamma^2 \delta^{2\tau+1}} \norm{E}_\rho 
\nonumber \\
& =: \frac{\CDeltaN}{\gamma^2 \delta^{2\tau+1}} \norm{E}_\rho \,, \label{eq:CDeltaN}
\\
\|\bar N^\top-N^\top&\|_{\rho-3\delta} \nonumber \\
& \leq \frac{\CA \CDeltaLT + \CDeltaA \CLT + \CDeltaB \CNOT + \sigmaB \CDeltaNOT}{\gamma^2 \delta^{2\tau+1}} \norm{E}_\rho \nonumber \\
& =: \frac{\CDeltaNT}{\gamma^2 \delta^{2\tau+1}} \norm{E}_\rho \,. \label{eq:CDeltaNT}
\end{align}

\bigskip


\paragraph
{\emph{Step 6: Control of the action of the left operator}.}
It is worth mentioning that an important effort is made
to obtain optimal estimates for the twist condition. As it was illustrated in
the proof of Lemma \ref{lem:twist}, 
improved estimates are obtained by avoiding
the use of Cauchy estimates when controlling the action of $\Lie{\omega}$
on different objects and their corrections.

Using 
the assumption \eqref{eq:ingredient:iter:1}
and the control \eqref{eq:barEvsE} of the new error of invariance,
we preserve the control for the new objects
$\Lie{\omega}\bar K(\theta)$, $\Lie{\omega}\bar L(\theta)$ and $\Lie{\omega}\bar L(\theta)^\top$.
Indeed, using \eqref{eq:barEvsE},
we have
\begin{align*}
\norm{\Lie{\omega} \bar K}_{\rho-2\delta} \leq {} & \norm{\bar E}_\rho+ \norm{X_\H \circ \bar K}_\rho \leq \delta \cauxT + \cteXH = \CLieK\,, \\
\norm{\Lie{\omega} \bar L}_{\rho-3\delta} \leq {} & 
d \cauxT + \cteDXH \sigmaDK + \cteDXp \CLieK
= \CLieL \,, \\
\norm{\Lie{\omega} \bar L^\top}_{\rho-3\delta} \leq {} &
\max \left\{2n \cauxT + \cteDXHT \sigmaDK \, , \, \cteDXpT \CLieK \right\} 
= \CLieLT \,.
\end{align*}

Then, we also control the action of $\Lie{\omega}$ on the correction of the torus, using that
\begin{align*}
& \Lie{\omega} \bar K(\theta) - \Lie{\omega} K(\theta) =
\Lie{\omega} \DeltaK(\theta) \\
& \qquad = 
\Lie{\omega} L(\theta) \xi^L(\theta)
+L(\theta) \Lie{\omega} \xi^L(\theta)
+\Lie{\omega} N(\theta) \xi^N(\theta)
+N(\theta) \Lie{\omega}\xi^N(\theta)
\end{align*}
and, recalling previous estimates, we obtain:
\begin{align}
\norm{\Lie{\omega} \bar K - \Lie{\omega} K}_{\rho-2\delta} \leq {} &
\left(
\frac{\CLieL \CxiL}{\gamma^2 \delta^{2\tau}} +
\frac{\CL \CLiexiL}{\gamma \delta^\tau} + \frac{\CLieN \CxiN}{\gamma \delta^{\tau}}
+\CN \CLiexiN
\right)\norm{E}_\rho
\nonumber \\
  =:{} & \frac{\CDeltaLieK}{\gamma^2 \delta^{2\tau}} \norm{E}_\rho\,. 
\label{eq:CDeltaLieK}
\end{align}

The action of $\Lie{\omega}$ on the correction of $L(\theta)$ is similar. On the one hand, we
have
\begin{align}
\norm{\Lie{\omega} \bar L-\Lie{\omega} L}_{\rho -3\delta}
\leq {} & 
\norm{\Lie{\omega}( \Dif \bar K - \Dif K)}_{\rho-3\delta}
+\norm{\Lie{\omega} (X_p \circ \bar K - X_p \circ K)}_{\rho-3\delta}\nonumber \\
\leq {} & 
\norm{ \Dif (\Lie{\omega}\bar K - \Lie{\omega} K)}_{\rho-3\delta} \nonumber \\
& +\norm{(\Dif X_p \circ \bar K) [\Lie{\omega} \bar K] - 
(\Dif X_p \circ \bar K) [\Lie{\omega}K]}_{\rho-3\delta}  \nonumber \\
& +\norm{(\Dif X_p \circ \bar K) [\Lie{\omega} K] - 
(\Dif X_p \circ K) [\Lie{\omega}K]}_{\rho-3\delta}  \nonumber \\
\leq {} & \frac{d \CDeltaLieK}{\gamma^2 \delta^{2\tau+1}} \norm{E}_\rho
+ \frac{\cteDXp \CDeltaLieK}{\gamma^2 \delta^{2\tau}} \norm{E}_\rho
+ \frac{\cteDDXp \CDeltaK \CLieK}{\gamma^2 \delta^{2\tau}} \norm{E}_\rho \nonumber \\
=: {} & \frac{\CDeltaLieL}{\gamma^2 \delta^{2\tau+1}} \norm{E}_\rho\,, \label{eq:CDeltaLieL}
\end{align}
and on the other hand, we have
\begin{align}
&\norm{\Lie{\omega} \bar L^\top-\Lie{\omega} L^\top}_{\rho -3\delta} \nonumber \\
& \qquad\qquad \leq
\max\{\norm{\Lie{\omega}( \Dif \bar K - \Dif K)^\top}_{\rho-3\delta}
\, , \,
\norm{\Lie{\omega} (X_p \circ \bar K - X_p \circ K)^\top}_{\rho-3\delta}\} \nonumber \\
& \qquad\qquad \leq 
\max
\left \{\frac{2n \CDeltaLieK}{\gamma^2 \delta^{2\tau+1}} \norm{E}_\rho
\, , \,
\frac{\cteDXpT \CDeltaLieK}{\gamma^2 \delta^{2\tau}} \norm{E}_\rho
+ \frac{\cteDDXpT \CDeltaK \CLieK}{\gamma^2 \delta^{2\tau}} \norm{E}_\rho \right\} \nonumber \\
& \qquad\qquad =: \frac{\CDeltaLieLT}{\gamma^2 \delta^{2\tau+1}} \norm{E}_\rho\,. \label{eq:CDeltaLieLT}
\end{align}

To control the action of $\Lie{\omega}$ on the
correction of the matrix $B(\theta)$ that provides the transversality condition,
we first consider
the correction 
\begin{align}
\|\Lie{\omega}(G \circ \bar K) - & \Lie{\omega}(G\circ K)\|_{\rho-2\delta} 
\leq \norm{\Dif G}_{\B} \norm{\Lie{\omega}\bar K -\Lie{\omega}K}_{\rho-2\delta} \nonumber \\
&~+ \norm{\Dif^2 G}_{\B} \norm{\bar K-K}_{\rho-2\delta} \norm{\Lie{\omega} K}_{\rho-\delta} \nonumber \\
&\leq \frac{\cteDG \CDeltaLieK + \cteDDG \CDeltaK \CLieK}{\gamma^2 \delta^{2\tau}} \norm{E}_\rho =: \frac{\CDeltaLieG}{\gamma^2 \delta^{2\tau}} \norm{E}_\rho\,, \label{eq:CDeltaLieG}
\end{align}
and we control the correction of the adapted metric
\begin{equation}\label{eq:eqCDeltaLieGL}
\begin{split}
\Lie{\omega} G_{\bar L}(\theta) - \Lie{\omega} G_L(\theta)
= {} &   \Lie{\omega} \bar L^\top(\theta) G(\bar K(\theta)) \bar L(\theta) 
- \Lie{\omega}L^\top(\theta) G (K(\theta)) L(\theta) \\
& + \bar L^\top(\theta) \Lie{\omega} G (\bar K(\theta)) \bar L (\theta) 
- L^\top(\theta) \Lie{\omega} G(K(\theta)) L(\theta) \\
& +\bar L^\top(\theta) G(\bar K(\theta)) \Lie{\omega}\bar L(\theta)  
- L^\top(\theta)  G (K(\theta)) \Lie{\omega}L(\theta)
\end{split}
\end{equation}
as follows
\begin{align}
\|\Lie{\omega} G_{\bar L} & - \Lie{\omega} G_L\|_{\rho-3\delta} \nonumber \\
& \leq
\frac{\CLieLT \cteG \CDeltaL + \CLieLT \cteDG \CDeltaK \CL \delta + \CDeltaLieLT \cteG \CL}{\gamma^2 \delta^{2\tau+1}}\norm{E}_\rho \nonumber \\
& \hphantom{\leq} + 
\frac{\CLT \cteDG \CLieK \CDeltaL + \CLT \CDeltaLieG \CL \delta + \CDeltaLT \cteDG \CLieK \CL}{\gamma^2 \delta^{2\tau+1}}\norm{E}_\rho \nonumber \\
& \hphantom{\leq} + 
\frac{\CLT \cteG \CDeltaLieL + \CLT \cteDG \CDeltaK \CLieL \delta + \CDeltaLT \cteG \CLieL}{\gamma^2 \delta^{2\tau+1}}\norm{E}_\rho \nonumber \\
& =: \frac{\CDeltaLieGL}{\gamma^2 \delta^{2\tau+1}}\norm{E}_\rho\,. \label{eq:CDeltaLieGL}
\end{align}
Moreover, the following estimates (borrowed from Lemma \ref{lem:twist}) will be also useful
\[
\norm{\Lie{\omega} G_L}_{\rho-\delta} 
\leq
\CLieGL\,,
\qquad
\norm{\Lie{\omega} G_{\bar L}}_{\rho-3\delta} 
\leq
\CLieGL\,,
\]
Again, it is clear that  this control is
preserved for the corrected objects.
Using the previous estimates, we have
\begin{align}
& \norm{\Lie{\omega}\bar B -\Lie{\omega} B}_{\rho-3\delta} \leq 
\norm{\bar B \Lie{\omega} G_{\bar L} \bar B - \bar B \Lie{\omega} G_{\bar L} B}_{\rho-3 \delta} \nonumber \\
& \qquad + \norm{\bar B \Lie{\omega} G_{\bar L} B - \bar B \Lie{\omega} G_L B}_{\rho-3 \delta} 
+ \norm{\bar B \Lie{\omega} G_{L} B - B \Lie{\omega} G_L B}_{\rho-3 \delta} \nonumber \\
& \qquad \leq
\frac{
2 \sigmaB \CLieGL \CDeltaB + (\sigmaB)^2 \CDeltaLieGL
}{\gamma^2 \delta^{2\tau+1}} \norm{E}_\rho =:
\frac{
\CDeltaLieB
}{\gamma^2 \delta^{2\tau+1}} \norm{E}_\rho\,. \label{eq:CDeltaLieB}
\end{align}

By repeating the computations \eqref{eq:CDeltaLieG},
\eqref{eq:eqCDeltaLieGL} and \eqref{eq:CDeltaLieGL},
mutatis mutandis, we obtain the estimates
\begin{align}
\|\Lie{\omega}(\tilde \Omega \circ \bar K) - & \Lie{\omega}(\tilde \Omega \circ K)\|_{\rho-2\delta} 
\nonumber \\
&\leq \frac{\cteDtOmega \CDeltaLieK + \cteDDtOmega \CDeltaK \CLieK}{\gamma^2 \delta^{2\tau}} \norm{E}_\rho =: \frac{\CDeltaLietOmega}{\gamma^2 \delta^{2\tau}} \norm{E}_\rho\,, \label{eq:CDeltaLietOmega}
\end{align}
and
\begin{align}
\|\Lie{\omega} \tilde \Omega_{\bar L} & -\Lie{\omega} \tilde\Omega_L\|_{\rho-3\delta} \nonumber \\
& \leq
\frac{\CLieLT \ctetOmega \CDeltaL + \CLieLT \cteDtOmega \CDeltaK \CL \delta + \CDeltaLieLT \ctetOmega \CL}{\gamma^2 \delta^{2\tau+1}}\norm{E}_\rho \nonumber \\
& \hphantom{\leq} + 
\frac{\CLT \cteDtOmega \CLieK \CDeltaL + \CLT \CDeltaLietOmega \CL \delta + \CDeltaLT \cteDtOmega \CLieK \CL}{\gamma^2 \delta^{2\tau+1}}\norm{E}_\rho \nonumber \\
& \hphantom{\leq} + 
\frac{\CLT \ctetOmega \CDeltaLieL + \CLT \cteDtOmega \CDeltaK \CLieL \delta + \CDeltaLT \ctetOmega \CLieL}{\gamma^2 \delta^{2\tau+1}}\norm{E}_\rho \nonumber \\
& =: \frac{\CDeltaLietOmegaL}{\gamma^2 \delta^{2\tau+1}}\norm{E}_\rho\,. \label{eq:CDeltaLietOmegaL}
\end{align}
We also recall the following controls
\[
\norm{\Lie{\omega} \tilde \Omega_L}_{\rho-\delta} 
\leq
\CLietOmegaL\,.
\qquad
\norm{\Lie{\omega} \tilde \Omega_{\bar L}}_{\rho-3\delta} 
\leq
\CLietOmegaL\,.
\]
Finally, we obtain
\begin{align}
\|\Lie{\omega}\bar A & -\Lie{\omega} A\|_{\rho-3\delta} \nonumber \\
& \leq \frac{\CLieB \CtOmegaL \CDeltaB + \CLieB \CDeltatOmegaL \sigmaB + \CDeltaLieB \CtOmegaL \sigmaB}{\gamma^2 \delta^{2\tau+1}} \norm{E}_\rho \nonumber \\
& \hphantom{\leq} + \frac{1}{2}
\frac{\sigmaB \CLietOmegaL \CDeltaB + (\sigmaB)^2 \CDeltaLietOmegaL + \CDeltaB \CLietOmegaL \sigmaB}{\gamma^2 \delta^{2\tau+1}} \norm{E}_\rho \nonumber \\
& =:
\frac{
\CDeltaLieA
}{\gamma^2 \delta^{2\tau+1}} \norm{E}_\rho\,, \label{eq:CDeltaLieA} \\
\|\Lie{\omega}(J \circ \bar K) & -\Lie{\omega} (J\circ K)\|_{\rho-2\delta} \nonumber \\
& \leq \frac{\cteDJ \CDeltaLieL + \cteDDJ \CDeltaK \CLieK}{\gamma^2 \delta^{2\tau}} \norm{E}_\rho =: \frac{
\CDeltaLieJ
}{\gamma^2 \delta^{2\tau}} \norm{E}_\rho\,,
\label{eq:CDeltaLieJ} \\
\|\Lie{\omega}\bar N^0 & -\Lie{\omega} N^0\|_{\rho-3\delta} \nonumber \\
& \leq \frac{\CLieJ \CDeltaL + \CDeltaLieJ \CL\delta + \cteJ \CDeltaLieL + \CDeltaJ \CLieL \delta}{\gamma^2 \delta^{2\tau+1}} \norm{E}_\rho \nonumber \\
&  =: \frac{
\CDeltaLieNO
}{\gamma^2 \delta^{2\tau+1}} \norm{E}_\rho\,,
\label{eq:CDeltaLieNO} \\
\|\Lie{\omega}\bar N & -\Lie{\omega} N\|_{\rho-3\delta} \nonumber \\
& \leq \frac{\CLieL \CDeltaA + \CDeltaLieL \CA + \CL \CDeltaLieA + \CDeltaL \CLieA}{\gamma^2 \delta^{2\tau+1}} \norm{E}_\rho \nonumber \\
& \hphantom{\leq} + \frac{\CLieNO \CDeltaB + \CDeltaLieNO \sigmaB + \CNO \CDeltaLieB + \CDeltaNO \CLieB}{\gamma^2 \delta^{2\tau+1}} \norm{E}_\rho \nonumber \\
&  =: \frac{
\CDeltaLieN
}{\gamma^2 \delta^{2\tau+1}} \norm{E}_\rho\,,
\label{eq:CDeltaLieN}
\end{align}
where we used the constant \eqref{eq:CDeltaLieB}.

\bigskip


\paragraph
{\emph{Step 7: Control of the new torsion condition}.}
Notice that this
step could be replaced by the use of Cauchy estimates, thus obtaining
a much pessimistic control. 
Now we control the non-degeneracy (twist) condition
associated to the new torsion matrix
\[
\bar T(\theta) = \bar N^\top \Omega(\bar K(\theta)) \Loper {\bar N}(\theta)\,,
\]
where
\[
\Loper{\bar N}(\theta)=\Dif X_\H (\bar K(\theta)) \bar N(\theta) + \Lie{\omega} \bar N(\theta)\,,
\]
is the infinitesimal displacement of the normal subbundle for the linearized dynamics.
At this point, intermediate
computations will be skipped for convenience. Such
details are left to the reader (they are analogous to previous computations).

We start by controlling the correction of the displacement. To this end, we observe that
\begin{align*}
\Loper{\bar N}(\theta) - \Loper{N}(\theta) = {} & 
\Dif X_{\H}(\bar K(\theta)) \bar N(\theta) -
\Dif X_{\H}(K(\theta)) N(\theta)
\\
& + \Lie{\omega} \bar N(\theta) - \Lie{\omega} N(\theta) \,,
\end{align*}
and we readily obtain
\begin{align}
\norm{\Loper{\bar N} - \Loper{N}}_{\rho-3\delta} \leq {} &
\frac{\cteDXH \CDeltaN + \cteDDXH \CDeltaK \CN \delta + \CDeltaLieN}{\gamma^2 \delta^{2\tau+1}} \norm{E}_\rho \nonumber \\
=: {} & \frac{\CDeltaLoperN}{\gamma^2 \delta^{2\tau+1}} \norm{E}_\rho \,.
\label{eq:CDeltaLoperN}
\end{align}
Now, we can control the correction of the torsion matrix
\begin{align}
& \norm{\bar T -T}_{\rho-3\delta} \leq 
\norm{\bar N^\top}_{\rho-3\delta} \norm{\Omega \circ \bar K}_{\rho-2\delta}
\norm{
\Loper{\bar N} - \Loper{N}
}_{\rho-3\delta} \nonumber \\
& \qquad \hphantom{\leq} +
\norm{\bar N^\top}_{\rho-3\delta} \norm{\Omega \circ \bar K - \Omega \circ K }_{\rho-2\delta}
\norm{\Loper{N}}_{\rho-\delta}  \nonumber \\
& \qquad  \hphantom{\leq}+
\norm{\bar N^\top-N^\top}_{\rho-3\delta} \norm{\Omega \circ K}_{\rho}
\norm{\Loper{N}}_{\rho-\delta} \nonumber \\
& \qquad \leq
\frac{\CNT \cteOmega \CDeltaLoperN + \CNT \cteDOmega \CDeltaK \CLoperN \delta + \CDeltaNT \cteOmega \CLoperN)}{\gamma^2 \delta^{2\tau+1}} \norm{E}_\rho \nonumber \\
& \qquad
=: \frac{\CDeltaT}{\gamma^2 \delta^{2\tau+1}} \norm{E}_\rho\,. \label{eq:CDeltaT}
\end{align}
Then,
we introduce the constant
\[
\CDeltaTOI := 2 (\sigmaT)^2 \CDeltaT
\]
and check the condition \eqref{eq:lem:aux} in Lemma \ref{lem:aux}:
\begin{align}
\frac{2 (\sigmaT)^2 |\aver{\bar T} - \aver{T}|}{\sigmaT-|\aver{T}^{-1}|} \leq {} &
\frac{2 (\sigmaT)^2 \norm{\bar T - T}_{\rho-3\delta}}{\sigmaT-|\aver{T}^{-1}|} \leq 
\frac{2 (\sigmaT)^2 \CDeltaT}{\sigmaT-|\aver{T}^{-1}|} \frac{\norm{E}_\rho}{\gamma^2 \delta^{2\tau+1}}
\nonumber \\
= {} & 
\frac{\CDeltaTOI}{\sigmaB-\norm{B}_\rho} \frac{\norm{E}_\rho}{\gamma^2 \delta^{2\tau+1}}
 < 1\,,
\label{eq:ingredient:iter:8}
\end{align}
where the last inequality follows from
Hypothesis \eqref{eq:cond1:K:iter} (this corresponds to the sixth term in \eqref{eq:mathfrak1}).
Hence, by invoking Lemma \ref{lem:aux}, we conclude that
\begin{equation}\label{eq:CDeltaTOI}
|\aver{\bar T}^{-1}| <\sigmaT\,, \qquad
|\aver{\bar T}^{-1} - \aver{T}^{-1}| \leq \frac{2 (\sigmaT)^2 \CDeltaT}{\gamma^2 \delta^{2\tau+1}}
\norm{E}_\rho = \frac{\CDeltaTOI}{\gamma^2 \delta^{2\tau+1}}
\norm{E}_\rho\,,
\end{equation}
and so, we obtain the estimates \eqref{eq:B:iter1} and \eqref{eq:est:DeltaB} on the new object.
\end{proof}

\subsection{Convergence of the iterative process}\label{ssec:proof:KAM}

Now we are ready to proof our first KAM theorem with conserved quantities.
Once the quadratic procedure has been
established in Section \ref{ssec:iter:lemmas},
proving the convergence of the scheme follows standard arguments.
Nevertheless, the required computations will be carefully detailed
since we are interested in providing explicit conditions for the KAM theorem.

\begin{proof}
[Proof of Theorem \ref{theo:KAM}]
Let us consider the approximate torus $K_0:=K$ with initial error $E_0:=E$. We
also introduce $B_0:=B$ and $T_0:=T$ associated with the initial approximation.
By applying Lemma \ref{lem:KAM:inter:integral} recursively, we obtain new objects
$K_s=\bar K_{s-1}$,
$E_s=\bar E_{s-1}$,
$B_s=\bar B_{s-1}$ and
$T_s=\bar T_{s-1}$. The domain of analyticity
of these objects is reduced at every step. To characterize this fact, we
introduce parameters $a_1>1$, $a_2>1$, $a_3=3 \frac{a_1}{a_1-1} \frac{a_2}{a_2-1}$ and
define
\[
\rho_0=\rho, \quad
\delta_0=\frac{\rho_0}{a_3}, \quad
\rho_s=\rho_{s-1} - 3 \delta_{s-1}, \quad
\delta_s= \frac{\delta_0}{a_1^s}, \quad
\rho_\infty = \lim_{s\to \infty} \rho_s = \frac{\rho_0}{a_2}\,.
\]
We can select the above parameters (together with the
parameter $\cauxT$) to optimize the convergence of the KAM
process for a particular problem (see \cite{FiguerasHL17}). 

Let us assume that we have successfully applied $s$ times Lemma \ref{lem:KAM:inter:integral}
(the Iterative Lemma), and 
let $K_s$, $E_s$, $B_s$ and $T_s$ be the objects at the $s$-step of the quasi-Newton method.
We observe that condition \eqref{eq:cond1:K:iter} is required at every step, but the construction
has been performed in such a way that we control 
$\norm{\Dif K_s}_{\rho_s}$,
$\norm{(\Dif K_s)^\top}_{\rho_s}$,
$\norm{B_s}_{\rho_s}$,
$\dist(K_s(\TT^d_{\rho_s}),\partial \B)$,
and
$\abs{\aver{T_s}^{-1}}$ uniformly with respect to $s$,
so the constants that appear in Lemma \ref{lem:KAM:inter:integral}
(which are obtained in Table \ref{tab:constants:all} and
Table \ref{tab:constants:all:2})
are taken to be the same for all steps by considering the worst
value of $\delta_s$, that is, $\delta_0 = \rho_0/a_3$.

The first computation is tracking the sequence $E_s$ of invariance errors:
\begin{equation}\label{eq:conv:geom}
\begin{split}
\norm{E_s}_{\rho_s} < {} & 
\frac{\CE}{\gamma^4 \delta_{s-1}^{4\tau}} \norm{E_{s-1}}_{\rho_{s-1}}^2 =
\frac{\CE a_1^{4\tau(s-1)}}{\gamma^4 \delta_0^{4\tau}} \norm{E_{s-1}}_{\rho_{s-1}}^2 \\
< {} & \left( \frac{a_1^{4 \tau} \CE \norm{E_0}_{\rho_0}}{\gamma^4 \delta_0^{4\tau}}
\right)^{2^s-1} a_1^{-4\tau s} \norm{E_0}_{\rho_0} 
< a_1^{-4\tau s} \norm{E_0}_{\rho_0}
\,, 
\end{split}
\end{equation}
where we used the sums
$1+2+\ldots+2^{s-1}=2^s-1$, and
$1(s-1)+2(s-2)+2^2(s-3)\ldots+2^{s-2} 1 = 2^s-s-1$.
Notice that we also used the inequality
\begin{equation}\label{eq:theo:conv:1}
\frac{a_1^{4 \tau} \CE \norm{E_0}_{\rho_0}}{\gamma^4 \delta_0^{4\tau}} <1\,,
\end{equation}
which is included in \eqref{eq:KAM:HYP}. Now,
using expression \eqref{eq:conv:geom}, we check the
Hypothesis \eqref{eq:cond1:K:iter} of the iterative Lemma,
so that we can perform the step $s+1$. 
The required sufficient condition will be included in
the hypothesis \eqref{eq:KAM:HYP} of the KAM theorem.

The right inequality in \eqref{eq:cond1:K:iter} reads:
\[
\frac{\CE \norm{E_s}_{\rho_s}}{\gamma^4 \delta_s^{4\tau}} \leq
\frac{\CE a_1^{-4\tau s} \norm{E_0}_{\rho_0}}{\gamma^4 \delta_s^{4\tau}} 
= 
\frac{\CE \norm{E_0}_{\rho_0}}{\gamma^4 \delta_0^{4\tau}} 
\leq a_1^{-4\tau}
< 1 \,,
\]
where we used \eqref{eq:conv:geom} and \eqref{eq:theo:conv:1}.

The left inequality in \eqref{eq:cond1:K:iter} has several terms
(which correspond to the different components in \eqref{eq:mathfrak1}).
The first of them, using again \eqref{eq:conv:geom}, is given by
\begin{equation}\label{eq:theo:conv:2}
\frac{\norm{E_s}_{\rho_s}}{\delta_s} \leq \frac{a_1^s a_1^{-4\tau s} \norm{E_0}_{\rho_0}}{\delta_0}
\leq \frac{\norm{E_0}_{\rho_0}}{\delta_0} < \cauxT\,.
\end{equation}
We used that $\tau \geq d-1 \geq 1$, so that $1-4\tau<0$.
The last inequality in \eqref{eq:theo:conv:2} is included in
\eqref{eq:KAM:HYP}. The second term is guaranteed by
performing the following computation
\begin{equation}\label{eq:theo:conv:3}
\frac{2 \Csym \norm{E_s}_{\rho_s}}{\gamma \delta_s^{\tau+1}}
\leq 
\frac{2 \Csym a_1^{s(\tau+1)} a_1^{-4\tau s} \norm{E_0}_{\rho_0}}{\gamma \delta_0^{\tau+1}}
\leq 
\frac{2 \Csym \norm{E_0}_{\rho_0}}{\gamma \delta_0^{\tau+1}}
< 1 \,,
\end{equation}
where we used \eqref{eq:conv:geom}, the fact that $1-3\tau<0$,
and we have included the last inequality in \eqref{eq:KAM:HYP}.
The remaining conditions are similar. We only need to pay attention to the
fact that they involve the objects
$\Dif K_s$, $(\Dif K_s)^\top$, $B_s$ and $\aver{T_s}^{-1}$,
at the $s^{\mathrm{th}}$ step. Hence, we have to relate these conditions
to the corresponding initial objects 
$\Dif K_0$, $(\Dif K_0)^\top$, $B_0$ and $\aver{T_0}^{-1}$.
For example, the third term in \eqref{eq:cond1:K:iter} reads
\[
\left(\frac{ d\CDeltaK}{\sigmaDK-\norm{\Dif K_s}_{\rho_s}}\right) \frac{\norm{E_s}_{\rho_s}}{\gamma^2 \delta_s^{2\tau+1}}< 1\,,
\]
and it is checked by performing the following computations
\begin{align}
\norm{\Dif K_s}_{\rho_s} + & \frac{d \CDeltaK\norm{E_s}_{\rho_s}}{\gamma^2 \delta_s^{2\tau+1}} 
<  
\norm{\Dif K_0}_{\rho_0} + \sum_{j=0}^s \frac{d \CDeltaK \norm{E_j}_{\rho_j}}{\gamma^2 \delta_j^{2\tau+1}} \nonumber \\
< {} &  
\norm{\Dif K_0}_{\rho_0} + \sum_{j=0}^\infty \frac{d \CDeltaK
a_1^{(1-2\tau)j}}{\gamma^2 \delta_0^{2\tau+1}}
\norm{E_0}_{\rho_0} \nonumber
\\
= {} &  
\norm{\Dif K_0}_{\rho_0} + \frac{d
\CDeltaK \norm{E_0}_{\rho_0}}{\gamma^2 \delta_0^{2\tau+1}}
\left(
\frac{1}{1-a_1^{1-2\tau}}
\right) < \sigmaDK\,. \label{eq:theo:conv:4}
\end{align}
Again, the last
inequality is included into \eqref{eq:KAM:HYP}.
The fourth, fifth and sixth terms in \eqref{eq:cond1:K:iter}
(associated to $(\Dif K_s)^\top$, $B_{s}$ and $\aver{T_s}^{-1}$, respectively)
follow by reproducing the same computations. Finally,
the seventh term in \eqref{eq:cond1:K:iter} is checked as follows
\begin{align}
\dist (K_s(\TT^d_{\rho_s}),\partial \B) - & \frac{\CDeltaK}{\gamma^2 \delta_s^{2\tau}} \norm{E_s}_{\rho_s}> 
\dist (K_0(\TT^d_{\rho_0}),\partial \B) - \sum_{j=0}^\infty \frac{\CDeltaK a_1^{-2\tau j} \norm{E_0}_{\rho_0}}{\gamma^2 \delta_0^{2\tau}} 
\nonumber \\
& = \dist (K_0(\TT^d_{\rho_0}),\partial \B) - \frac{\CDeltaK \norm{E_0}_{\rho_0}}{\gamma^2 \delta_0^{2\tau}}
\frac{1}{1-a_1^{-2\tau}} > 0\,,
\label{eq:check:dist}
\end{align}
where the last inequality is included into \eqref{eq:KAM:HYP}.

Having guaranteed all hypothesis of Lemma \ref{lem:KAM:inter:integral},
we collect the inequalities
\eqref{eq:theo:conv:1},
\eqref{eq:theo:conv:2},
\eqref{eq:theo:conv:3},
\eqref{eq:theo:conv:4} and
\eqref{eq:check:dist} that are included into
hypothesis \eqref{eq:KAM:HYP}. This follows
by introducing the constant $\mathfrak{C}_1$ as
\begin{equation}\label{eq:cte:mathfrakC1}
\mathfrak{C}_1:=\max
\left\{
(a_1 a_3)^{4\tau} \CE \, , \, (a_3)^{2\tau+1} \gamma^2 \rho^{2\tau-1} \CDeltatot
\right\}
\end{equation}
where
\begin{align}
\CDeltaI := {} &
\max
\Bigg\{
\frac{d \CDeltaK}{\sigmaDK-\norm{\Dif K}_{\rho}} \, , \,
\frac{2n \CDeltaK}{\sigmaDKT-\norm{(\Dif K)^\top}_{\rho}} \, , \label{eq:CDeltaI}
\\
& \qquad\qquad\qquad\qquad  \frac{\CDeltaB}{\sigmaB-\norm{B}_{\rho}} \, , \,
\frac{\CDeltaTOI}{\sigmaT-\abs{\aver{T}^{-1}}}
\Bigg\} \,, \nonumber \\
\CDeltaII := {} & \frac{\CDeltaK \delta}{\dist(K(\TT^d_{\rho}),\partial \B)}\,,
\label{eq:CDeltaII} \\
\CDeltatot := {} & \max
\Bigg\{
\frac{\gamma^2 \delta^{2\tau}}{\cauxT} \, , \,
2 \Csym \gamma \delta^\tau \, , \,
\frac{\CDeltaI}{1-a_1^{1-2\tau}} \, , \, 
\frac{\CDeltaII}{1-a_1^{-2\tau}}
\Bigg\}
\label{eq:CDeltatot} \,.
\end{align}
Note that we recovered the original notation $K=K_0$, $B=B_0$, $T=T_0$, $\rho=\rho_0$
and $\delta=\delta_0$ for the original objects.

Therefore, by induction, we can apply the iterative process infinitely many times.
Indeed, we have
\[
\norm{E_s}_{\rho_s} < a_1^{-4 \tau s} \norm{E}_{\rho} \longrightarrow  0\
\quad \mbox{when} \quad s\rightarrow 0\,
\]
so the iterative scheme converges to a true quasi-periodic torus $K_\infty$.
As a result of the output of Lemma \ref{lem:KAM:inter:integral}, this object
satisfies
$K_\infty \in \Anal(\TT^{d}_{\rho_\infty})$ and
\[
\norm{\Dif K_\infty}_{\rho_\infty} < \sigmaDK\,,
\qquad
\norm{(\Dif K_\infty)^\top}_{\rho_\infty} < \sigmaDKT\,,
\qquad
\dist(K_\infty(\TT^d),\pd \B) > 0\,.
\]
Furthermore, we control the 
the limit objects
by repeating the computations in \eqref{eq:check:dist} as follows
\begin{align}
&\norm{K_\infty - K}_{\rho_\infty} \leq {} \sum_{j=0}^\infty
\norm{K_{j+1} - K_j}_{\rho_{j+1}} < \frac{\CDeltaK \norm{E}_{\rho}}{\gamma^2 \delta^{2\tau}}
\frac{1}{1-a_1^{-2\tau}} =:  
\frac{\mathfrak{C}_2 \norm{E}_{\rho}}{\gamma^2 \rho^{2\tau}} \label{eq:Cmathfrak2}\,, \\
&\abs{\aver{c \circ K_\infty} - \aver{c \circ K}} \leq {}
\norm{\Dif c}_\B \norm{K_\infty - K}_{\rho_\infty} < 
\frac{\cteDc \mathfrak{C}_2 \norm{E}_{\rho}}{\gamma^2 \rho^{2\tau}} =:
\frac{\mathfrak{C}_3 \norm{E}_{\rho}}{\gamma^2 \rho^{2\tau}} \label{eq:Cmathfrak3}\,, 
\end{align}
thus obtaining the estimates in \eqref{eq:close}.
This completes the proof of the ordinary KAM theorem.
\end{proof}

\section{Proof of the generalized iso-energetic KAM theorem}\label{sec:proof:KAM:iso}

In the section we present a proof of Theorem~\ref{theo:KAM:iso}
following the same structure of the proof of Theorem~\ref{theo:KAM} in
Section \ref{sec:proof:KAM}.
We will emphasize the differences between both proofs, and the computations that 
are analogous will be conveniently
omitted for the sake of brevity.
In Section \ref{ssec:qNewton:iso} we discuss the approximate solution of
linearized equations in the symplectic frame constructed in Section
\ref{sec:lemmas}. In Section
\ref{ssec:iter:lemmas:iso} we produce quantitative estimates for
the objects obtained when performing one iteration of the
previous procedure. Finally, in Section
\ref{ssec:proof:KAM:iso} we discuss the convergence of the quasi-Newton method.

\subsection{The quasi-Newton method}\label{ssec:qNewton:iso}

The proof of Theorem~\ref{theo:KAM:iso} consists again on
refining $K(\theta)$ and $\omega$ by means of a quasi-Newton method.
In this case, the total error is associated with
the invariance error and the
target energy level:
\[
E_c(\theta)=
\begin{pmatrix}
E(\theta) \\
E^\omega
\end{pmatrix}
= 
\begin{pmatrix}
X_\H(K(\theta)) + \Lie{\omega} K(\theta) \\
\aver{c \circ K} - c_0
\end{pmatrix}\, .
\]
Then, we look for a corrected parameterization
$\bar K(\theta)= K(\theta)+\DeltaK(\theta)$
and a corrected frequency $\bar \omega= \omega + \Deltao$
by considering the linearized system
\begin{equation}\label{eq:lin1:iso}
\begin{split}
& \Dif X_\H (K(\theta)) \DeltaK(\theta) + \Lie{\omega}\DeltaK(\theta) +
\Lie{\Deltao} K(\theta)
= - E(\theta) \,, \\
& \aver{(\Dif c \circ K) \DeltaK } =  - E^\omega \,.
\end{split}
\end{equation}
If we obtain a good enough approximation of the solution
$(\DeltaK(\theta),\Deltao)$
of \eqref{eq:lin1:iso},
then $\bar K(\theta)$ provides a parameterization
of an approximately invariant torus of frequency $\bar \omega$,
with a quadratic error in terms of $E_c(\theta)=(E(\theta),E^\omega)$. 

To face the linearized equations \eqref{eq:lin1:iso}, 
we introduce again the linear change
\begin{equation}\label{eq:choice:DK:iso}
\DeltaK (\theta) = P(\theta) \xi(\theta) \,,
\end{equation}
where $P(\theta)$ is the approximately symplectic frame
characterized in Lemma \ref{lem:sympl}. 
In addition, to ensure Diophantine properties for $\bar
\omega$, we select a parallel correction of the frequency:
\begin{equation}\label{eq:choice:Do:iso}
\Deltao = - \omega \, \xio\,,
\end{equation}
where $\xio$ is a real number.
This guarantees the solvability of system \eqref{eq:lin1:iso}
along the iterative procedure. The following notation
for the new unknowns will be useful
\[
\xi_c(\theta)=
\begin{pmatrix}
\xi(\theta) \\
\xi^\omega
\end{pmatrix}
\,,
\qquad
\xi(\theta)=
\begin{pmatrix}
\xi^L(\theta) \\
\xi^N(\theta)
\end{pmatrix}
\,.
\]

Then, taking into account the expressions \eqref{eq:choice:DK:iso}
and \eqref{eq:choice:Do:iso}, the system of equations in \eqref{eq:lin1:iso} becomes
\begin{align}
& \left( \Dif X_\H (K(\theta)) P(\theta) + \Lie{\omega}P(\theta) \right) \xi(\theta)
+P(\theta) \Lie{\omega} \xi(\theta) 
- \Lie{\omega} K(\theta) \xio
=   - E(\theta) \,, 
\label{eq:lin:1E:iso} \\
& \aver{(\Dif c \circ K) P \xi} =  - E^\omega \,.
\nonumber
\end{align}
We now multiply both sides of \eqref{eq:lin:1E:iso} by $-\Omega_0 P(\theta)^\top
\Omega(K(\theta))$, and we 
use the
geometric properties in Lemma \ref{lem:sympl} and
Lemma \ref{lem:reduc}, thus obtaining the equivalent equations:
\begin{align}
& \left(\Lambda(\theta)+\Ered(\theta) \right) \xi(\theta)
+ \left( I_{2n} - \Omega_0 \Esym(\theta)  \right) \Lie{\omega} \xi(\theta) 
\label{eq:lin:1Enew:iso} \\
\
&\qquad\qquad + \Omega_0 P(\theta)^\top \Omega(K(\theta)) \Lie{\omega} K(\theta) \xio
=  
\Omega_0 P(\theta)^\top \Omega(K(\theta)) E(\theta) \,, \nonumber \\
& \aver{(\Dif c \circ K) N \xi^N } + \aver{(\Dif c \circ K)L \xi^L }
=  - E^\omega \,.\label{eq:lin:1Eh:new:iso} 
\end{align}

We observe that
\[
\begin{split}
\Omega_0 P(\theta)^\top \Omega(K(\theta)) \Lie{\omega} K(\theta)
& = -\Omega_0 P(\theta)^\top \Omega(K(\theta)) L(\theta) \homega \\
& = 
\begin{pmatrix}
\homega \\
0_n
\end{pmatrix}
+
\begin{pmatrix}
A(\theta)^\top \Elag(\theta)\homega \\
-\Elag(\theta)\homega
\end{pmatrix}\, ,
\end{split}
\]
where we used the notation for $\homega$ introduced 
in the statement of the theorem
(e.g. equation \eqref{eq:Tc}). Moreover,
recalling the computations in Lemma \ref{lem:cons:H:p},
we obtain
\begin{align*}
\Dif c(K(\theta)) L(\theta) = {} &
\begin{pmatrix}
\Dif c(K(\theta)) \Dif K(\theta) & \Dif c(K(\theta)) X_p(K(\theta))
\end{pmatrix} \\
= {} &
\begin{pmatrix}
\Dif (c(K(\theta))) & 0_{n-d}^\top
\end{pmatrix} \\
= {} &
\begin{pmatrix}
\Dif (\R{\omega}(\Dif c(K(\theta))E(\theta))) & 0_{n-d}^\top
\end{pmatrix} \,.
\end{align*}

From the above expressions, we conclude that equations
\eqref{eq:lin:1Enew:iso} and \eqref{eq:lin:1Eh:new:iso}, are
approximated by a triangular system
that requires to solve two cohomological equations of the form
\eqref{eq:calL} consecutively. Quantitative estimates for the solutions of
such equations are provided in the following statement.

\begin{lemma}[Upper triangular equations for the iso-energetic case]\label{lem:upperT:iso}
Let $\omega \in \Dioph{\gamma}{\tau}$, $\eta^\omega \in \RR$, and let us
consider the map $\eta= (\eta^L,\eta^N) : \TT^d \to
\RR^{2n}\simeq \RR^n\times\RR^n$, with components in $\Anal(\TT^d_\rho)$, and the maps 
$T : \TT^d \rightarrow \RR^{n\times n}$ and $\Tdown : \TT^d \to \RR^{1\times n}$, 
with components in $\Anal(\TT^d_{\rho-\delta})$.
Let us introduce the notation
\[
T_c(\theta) = 
\begin{pmatrix} 
T(\theta) & \homega \\
\Tdown(\theta) & 0 
\end{pmatrix}, 
\qquad
\homega = 
\begin{pmatrix} \omega  \\ 0_{n-d} \end{pmatrix}\,.
\]
Let us assume that $T_c$ satisfies the non-degeneracy condition $\det
\aver{T_c} \neq 0$, and $\eta$ satisfies the compatibility condition
$\aver{\eta^N}=0_n$. Then, for any $\xi^L_0\in \RR^n$,
the system of equations
\begin{equation}\label{eq:syst:iso:peich}
\begin{split}
\begin{pmatrix}
O_n & T(\theta) \\
O_n & O_n
\end{pmatrix}
\begin{pmatrix}
\xi^L(\theta) \\
\xi^N(\theta)
\end{pmatrix}
+
\begin{pmatrix}
\Lie{\omega} \xi^L(\theta) \\
\Lie{\omega} \xi^N(\theta)
\end{pmatrix}
+
\begin{pmatrix}
\homega \xio \\
0_n
\end{pmatrix}
= {} & 
\begin{pmatrix}
\eta^L(\theta) \\
\eta^N(\theta)
\end{pmatrix} \\
\aver{\Tdown \xi^N} = {} & \eta^\omega\,, 
\end{split}
\end{equation}
has a solution given by
\begin{align}
\xi^N(\theta)={} & \xi^N_0 + \R{\omega}(\eta^N(\theta)) \,, \label{eq:xiy:iso} \\
\xi^L(\theta)={} & \xi^L_0 + \R{\omega}(\eta^L(\theta) - T(\theta)
\xi^N(\theta)) \,,\label{eq:xix:iso}
\end{align}
where
\begin{equation}\label{eq:averxiy:iso}
\begin{pmatrix}
\xi^N_0 \\
\xio
\end{pmatrix}
=
\aver{T_c}^{-1}
\begin{pmatrix}
\aver{\eta^L-T \R{\omega}(\eta^N)} \\
\aver{\eta^\omega- \Tdown \R{\omega}(\eta^N)}
\end{pmatrix}
\end{equation}
and $\R{\omega}$ is given by \eqref{eq:small:formal}.

Moreover, we have the estimates
\begin{align*}
& |\xi_0^N|,  |\xio| \leq \Abs{\aver{T_c}^{-1}} \max \left\{
\norm{\eta^L}_\rho
+ \frac{c_R}{\gamma \delta^\tau}
\norm{T}_{\rho-\delta}
\norm{\eta^N}_\rho
\, , \, \right.\\
& \qquad\qquad\qquad\qquad\qquad\qquad \left.
|\eta^\omega| + \frac{c_R }{\gamma \delta^\tau} \norm{\Tdown}_{\rho-\delta} \norm{\eta^N}_\rho\right\}\,, \\
& \norm{\xi^N}_{\rho-\delta} \leq |\xi_0^N| + \frac{c_R}{\gamma \delta^\tau}
\norm{\eta^N}_\rho \,, \\
& \norm{\xi^L}_{\rho-2\delta} \leq |\xi_0^L| + \frac{c_R}{\gamma \delta^\tau}
\left(
\norm{\eta^L}_{\rho-\delta}+\norm{T}_{\rho-\delta}
\norm{\xi^N}_{\rho-\delta}
\right) \,.
\end{align*}
\end{lemma}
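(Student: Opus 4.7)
The plan is to exploit the triangular block structure of \eqref{eq:syst:iso:peich} and reduce everything to two scalar cohomological equations plus a finite-dimensional linear system, then apply R\"ussmann's estimate (Lemma~\ref{lem:Russmann}) component-wise. First I would solve the bottom block of the first matrix equation, which reads $\Lie{\omega}\xi^N(\theta)=\eta^N(\theta)$. The compatibility hypothesis $\aver{\eta^N}=0_n$ guarantees a periodic solution, and by Lemma~\ref{lem:Russmann} the unique zero-average one is $\R{\omega}(\eta^N)$, giving the general solution \eqref{eq:xiy:iso} parameterised by the constant $\xi^N_0\in\RR^n$.

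Next I would plug this $\xi^N$ into the top block, obtaining
\[
\Lie{\omega}\xi^L(\theta)=\eta^L(\theta)-T(\theta)\xi^N(\theta)-\homega\,\xio\,.
\]
Solvability requires the right-hand side to have zero average, which is an affine constraint on the unknowns $(\xi^N_0,\xio)$. Computing the average explicitly and combining it with the iso-energetic constraint $\aver{\Tdown\,\xi^N}=\eta^\omega$ (after substituting \eqref{eq:xiy:iso}) produces the block system
\[
\begin{pmatrix}\aver{T}&\homega\\ \aver{\Tdown}&0\end{pmatrix}
\begin{pmatrix}\xi^N_0\\ \xio\end{pmatrix}
=
\begin{pmatrix}\aver{\eta^L-T\R{\omega}(\eta^N)}\\ \eta^\omega-\aver{\Tdown\R{\omega}(\eta^N)}\end{pmatrix}\,.
\]
The coefficient matrix is precisely $\aver{T_c}$, so the non-degeneracy hypothesis $\det\aver{T_c}\neq0$ yields formula \eqref{eq:averxiy:iso}. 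Once $\xi^N_0$ and $\xio$ are fixed, the right-hand side of the top block has zero average, and Lemma~\ref{lem:Russmann} delivers \eqref{eq:xix:iso}, parameterised by an arbitrary $\xi^L_0\in\RR^n$. One small point to handle carefully is that the constant term $\homega\,\xio$ is absorbed by $\R{\omega}$ since $\R{\omega}$ returns the zero-average primitive, which is why it does not explicitly appear in \eqref{eq:xix:iso}.

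For the quantitative bounds, the estimate on $|\xi^N_0|$ and $|\xio|$ follows by taking the max-norm of the right-hand side of \eqref{eq:averxiy:iso}: the first entry is bounded by $\norm{\eta^L}_\rho+\norm{T}_{\rho-\delta}\norm{\R{\omega}(\eta^N)}_{\rho-\delta}$, and Lemma~\ref{lem:Russmann} gives $\norm{\R{\omega}(\eta^N)}_{\rho-\delta}\leq \frac{c_R}{\gamma\delta^\tau}\norm{\eta^N}_\rho$; the second entry is bounded analogously using $\norm{\Tdown}_{\rho-\delta}$. Factoring $|\aver{T_c}^{-1}|$ and taking the max produces the first displayed inequality. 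The bound on $\norm{\xi^N}_{\rho-\delta}$ is then immediate from \eqref{eq:xiy:iso} by the triangle inequality, and the bound on $\norm{\xi^L}_{\rho-2\delta}$ is obtained by applying Lemma~\ref{lem:Russmann} once more to the right-hand side of the equation defining $\xi^L$ (losing an additional strip of width $\delta$), using the Banach-algebra estimate $\norm{T\xi^N}_{\rho-\delta}\leq\norm{T}_{\rho-\delta}\norm{\xi^N}_{\rho-\delta}$.

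There is no genuine obstacle here; the entire argument is essentially the observation that the extended torsion $T_c$ encodes both the Kolmogorov twist \emph{and} the iso-energetic matching. The only mildly delicate point is verifying that the compatibility condition from the top block combines cleanly with the iso-energetic scalar equation into precisely the matrix $\aver{T_c}$—i.e. that the choice \eqref{eq:choice:Do:iso} of a parallel frequency correction is what makes the second column of $\aver{T_c}$ equal to $\homega$. Once this matching is identified, the rest is bookkeeping together with Lemma~\ref{lem:Russmann}.
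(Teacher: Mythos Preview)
Your proposal is correct and follows essentially the same route as the paper: solve the bottom cohomological equation for $\xi^N$ up to a free constant $\xi^N_0$, combine the zero-average compatibility condition for the top equation with the iso-energetic constraint $\aver{\Tdown\xi^N}=\eta^\omega$ into the linear system with matrix $\aver{T_c}$, and then apply Lemma~\ref{lem:Russmann} for the estimates. Your observation that the constant term $\homega\,\xio$ disappears from \eqref{eq:xix:iso} because $\R{\omega}$ annihilates constants is exactly the right justification, and the paper's proof (which is little more than a reference to Lemma~\ref{lem:upperT}) leaves this implicit.
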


\begin{proof}
It is analogous to Lemma \ref{lem:upperT}. After solving
$\xi^N = \xi_0^N + \R{\omega}(\eta^N)$, $\xi_0^N \in \RR^n$ from the triangular
system, we observe that
\[
\aver{\Tdown \xi^N} = 
\aver{\Tdown} \xi_0^N + 
\aver{\Tdown \R{\omega} (\eta^N)} \,,
\]
so that the last equation in \eqref{eq:syst:iso:peich} becomes
\[
\aver{\Tdown} \xi_0^N =
\eta^\omega-\aver{\Tdown \R{\omega}(\eta^N)} 
=\aver{\eta^\omega-\Tdown \R{\omega}(\eta^N)} 
\,.
\]
This equation, together with the compatibility condition
required to solve the equation for $\xi^L$, yields
to a linear system which
can be solved using that $\det \aver{T_c} \neq 0$,
thus obtaining \eqref{eq:averxiy:iso}.
The estimates are obtained using Lemma \ref{lem:Russmann}.
\end{proof}

To approximate the solutions of 
\eqref{eq:lin:1Enew:iso}--\eqref{eq:lin:1Eh:new:iso},
we will invoke Lemma \ref{lem:upperT:iso} taking
\begin{align}
& \eta^L(\theta) = - N(\theta)^\top \Omega(K(\theta)) E(\theta)\,,
\label{eq:etaL:corr:iso} \\
& \eta^N(\theta) = L(\theta)^\top \Omega(K(\theta)) E(\theta)\,,
\label{eq:etaN:corr:iso} \\
& \eta^\omega = -E^\omega\,,
\label{eq:etao:corr:iso} \\
& \Tdown(\theta) = \Dif c(K(\theta)) N(\theta)\,,
\label{eq:Tdown:corr:iso}
\end{align}
and $T(\theta)$ given by \eqref{eq:T}. 
We recall from Lemma \ref{lem:sympl}
that the compatibility condition $\aver{\eta^N}=0_n$ is satisfied.
We will select the solution that satisfies
$\xi_0^L=\aver{\xi^L}=0_n$, even though
other choices can be selected according to the context (see
Remark~\ref{rem:unicity}). 

Then, from Lemma \ref{lem:upperT:iso} we have
$\norm{\xi}_{\rho-2\delta}=\cO(\norm{E_c}_\rho)$ and
$|\xi^\omega|=\cO(\norm{E_c}_\rho)$, 
and
using the geometric properties characterized in Section \ref{sec:lemmas},
we have
$\norm{\Ered}_{\rho-2\delta}=\cO(\norm{E}_\rho)$
and 
$\norm{\Esym}_{\rho-2\delta}=\cO(\norm{E}_\rho)$.
Hence, the error produced when approximating the solutions of
\eqref{eq:lin:1Enew:iso}--\eqref{eq:lin:1Eh:new:iso}
using the solutions of
\eqref{eq:syst:iso:peich} will be controlled by $\cO(\norm{E_c}_\rho^2)$.
This, together with other estimates, will be suitably quantified in the next section.

\subsection{One step of the iterative procedure}\label{ssec:iter:lemmas:iso}

In this section we apply one correction of the quasi-Newton method 
described in Section \ref{ssec:qNewton:iso} and we obtain sharp quantitative
estimates 
for the new approximately invariant torus and related objects. We 
set sufficient conditions to preserve the control of the previous 
estimates.

\begin{lemma} [The Iterative Lemma in the iso-energetic case] \label{lem:KAM:inter:integral:iso}
Let us consider the same setting and hypotheses of Theorem \ref{theo:KAM:iso},
and a constant $\cauxT>0$.
Then, there exist constants
$\CDeltaK$,
$\CDeltao$,
$\CDeltaB$,
$\CDeltaTcOI$ and
$\CEc$
such that
if the inequalities
\begin{equation}\label{eq:cond1:K:iter:iso}
\frac{\hCDelta \norm{E_c}_\rho}{\gamma^2 \delta^{2\tau+1}} < 1
\qquad\qquad
\frac{\CEc \norm{E_c}_\rho}{\gamma^4 \delta^{4\tau}} < 1
\end{equation}
hold for some $0<\delta< \rho$, where
\begin{equation}\label{eq:mathfrak1:iso}
\begin{split}
\hCDelta := {} & \max \bigg\{\frac{\gamma^2 \delta^{2\tau}}{\cauxT}
\, , \, 
2 \Csym \gamma \delta^{\tau} 
\, , \,
\frac{ d \CDeltaK }{\sigmaDK - \norm{\Dif K}_\rho} 
\, , \,
\frac{ 2n  \CDeltaK }{\sigmaDKT - \norm{(\Dif K)^\top}_\rho}
\, , \,
\\
&  \frac{\CDeltaB}{\sigmaB - \norm{B}_\rho} 
\, , \,
 \frac{\CDeltaTcOI}{\sigmaTc - \abs{\aver{T_c}^{-1}}} 
\, , \,
 \frac{\CDeltaK \delta}{\dist (K(\TT^d_\rho),\partial B)}
\, , \,
 \frac{\CDeltao \gamma \delta^{\tau+1}}{\dist (\omega,\partial \Theta)}
 \bigg\} \,,
\end{split}
\end{equation}
then we have an approximate torus of frequency $\bar \omega=\omega+\Deltao$
given by $\bar K=K+\DeltaK$, with components in $\Anal(\TT^d_{\rho-2\delta})$, that defines new objects $\bar B$
and $\bar T_c$ (obtained replacing $K$ by $\bar K$) satisfying
\begin{align}
& \norm{\Dif \bar K}_{\rho-3\delta} < \sigmaDK \,, \label{eq:DK:iter1:iso} \\
& \norm{(\Dif \bar K)^\top}_{\rho-3\delta} < \sigmaDKT \,, \label{eq:DKT:iter1:iso} \\
& \norm{\bar B}_{\rho-3\delta} < \sigmaB \,, \label{eq:B:iter1:iso} \\
& \abs{\aver{\bar T_c}^{-1}} < \sigmaTc \,, \label{eq:T:iter1:iso} \\
& \dist(\bar K(\TT^d_{\rho-2\delta}),\partial \B) >0 \,, \label{eq:distB:iter1:iso}\\
& \dist(\bar \omega,\partial \Theta) >0 \,, \label{eq:disto:iter1:iso} 
\end{align}
and
\begin{align}
& \norm{\bar K-K}_{\rho-2 \delta} < \frac{\CDeltaK}{\gamma^2 \delta^{2\tau}}
\norm{E_c}_\rho\,, \label{eq:est:DeltaK:iso} \\
& \abs{\bar \omega-\omega} < \frac{\CDeltao}{\gamma \delta^{\tau}}
\norm{E_c}_\rho\,, \label{eq:est:Deltao:iso} \\
& \norm{\bar B-B}_{\rho-3\delta} < \frac{\CDeltaB}{\gamma^2 \delta^{2 \tau+1}}
\norm{E_c}_\rho\,, \label{eq:est:DeltaB:iso} \\
& \abs{\aver{\bar T_c}^{-1}-\aver{T_c}^{-1} } < \frac{\CDeltaTcOI}{\gamma^2 \delta^{2 \tau+1}}
\norm{E_c}_\rho\,, \label{eq:est:DeltaT:iso}
\end{align}
The new total error is given by
\[
\bar E_c(\theta)=
\begin{pmatrix}
\bar E(\theta) \\
\bar E^\omega
\end{pmatrix}
= 
\begin{pmatrix}
X_\H(\bar K(\theta)) + \Lie{\bar \omega} \bar K(\theta) \\
\aver{c \circ \bar K} - c_0
\end{pmatrix} \,,
\]
and satisfies
\begin{equation}\label{eq:E:iter1:iso}
\norm{\bar E_c}_{\rho-2\delta} < \frac{\CEc}{\gamma^4
\delta^{4\tau}}\norm{E_c}_\rho^2\,.
\end{equation}
The above constants are collected in Table~\ref{tab:constants:all:2:iso}.
\end{lemma}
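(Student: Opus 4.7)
The plan is to adapt the seven-step structure of the proof of Lemma~\ref{lem:KAM:inter:integral} to the enlarged unknown $(\DeltaK,\Deltao)$, invoking Lemma~\ref{lem:upperT:iso} in place of Lemma~\ref{lem:upperT}. First, I would set up the linear change $\DeltaK(\theta)=P(\theta)\xi(\theta)$, $\Deltao=-\omega\xio$, and apply Lemma~\ref{lem:upperT:iso} with $\eta^L,\eta^N,\eta^\omega,\Tdown$ given by \eqref{eq:etaL:corr:iso}--\eqref{eq:Tdown:corr:iso}. Lemma~\ref{lem:sympl} gives $\aver{\eta^N}=0_n$, and the hypothesis $H_4'$ supplies the non-degeneracy of $\aver{T_c}$. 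The only subtlety compared to the ordinary case is that the bound on $|\xi_0^N|$ now contains a second term controlled by $|\eta^\omega|=|E^\omega|$ and $\norm{\Tdown}_{\rho-\delta}\leq\cteDc\CN$, and that the constant $\sigmaT$ must be replaced by $\sigmaTc$. This yields constants $\CxiNO,\CxiN,\CxiL,\Cxi$ and a new constant $\Cxio$ bounding $|\xio|$, and then $\CDeltaK$ and $\CDeltao:=|\omega|\,\Cxio$, after which \eqref{eq:est:DeltaK:iso}--\eqref{eq:est:Deltao:iso} follow. The condition on $\dist(\omega,\partial\Theta)$ in \eqref{eq:mathfrak1:iso} is exactly what ensures $\bar\omega\in\Theta$, so that $\bar\omega\in\Dioph{\gamma}{\tau}$ persists at the next step; the condition on $\dist(K(\TT^d_\rho),\partial \B)$ ensures $\bar K(\TT^d_{\rho-2\delta})\subset\B$ as in \eqref{eq:ingredient:iter:2}.

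Next, I would estimate the new total error $\bar E_c=(\bar E,\bar E^\omega)$. For the invariance component $\bar E(\theta)$, I would repeat the computation \eqref{eq:new:E:comp} but now keep track of the extra term $\Lie{\Deltao}K(\theta)=-\omega\xio\cdot \Lie{\omega}K(\theta)/\omega$-type contribution; more precisely, after the linear change the equation becomes \eqref{eq:lin:1Enew:iso}, so the linear error $\Elin$ acquires the same structure as before (with $\Lie{\omega}\xi$ replaced by $\Lie{\omega}\xi-\homega\xio$ in the normal block, which is still $\cO(\norm{E_c}_\rho)$), and the quadratic remainder $\Delta^2 X$ is controlled as in \eqref{eq:Delta2X} with $\norm{\DeltaK}$ replaced by the analogous bound in $\norm{E_c}_\rho$. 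For the energy component $\bar E^\omega$, Taylor expansion gives
\[
\bar E^\omega=E^\omega+\aver{\Dif c(K)\DeltaK}+\frac12\aver{\Dif^2 c(K+t\DeltaK)[\DeltaK,\DeltaK]}\,dt,
\]
and the solved equation $\aver{\Tdown\xi^N}=-E^\omega$ together with
$\Dif c(K(\theta))L(\theta)=(\Dif(\R{\omega}(\Dif c(K)E)),0_{n-d}^\top)$ (derived using Lemma~\ref{lem:cons:H:p} and involution) shows that
\[
E^\omega+\aver{\Dif c(K)(L\xi^L+N\xi^N)}=\aver{\Dif c(K)L\xi^L},
\]
which is quadratic in $\norm{E_c}_\rho$ because both factors are linear in it. Using $\cteDDc$ from $H_1'$ for the remaining quadratic term then yields $\CEc$ and \eqref{eq:E:iter1:iso}.

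The control of the new frame objects $\Dif\bar K,(\Dif \bar K)^\top,\bar L,\bar N^0,\bar A,\bar B,\bar N$ and of the Lie-derivative objects $\Lie{\omega}\bar K,\Lie{\omega}\bar L,\Lie{\omega}\bar A,\Lie{\omega}\bar B,\Lie{\omega}\bar N$ goes through verbatim as in Steps~3--6 of the proof of Lemma~\ref{lem:KAM:inter:integral}, just with $\norm{E}_\rho$ replaced by $\norm{E_c}_\rho$; the extra contribution from $\Deltao$ in expressions such as $\Lie{\bar\omega}\bar K=\Lie{\omega}\bar K+\Lie{\Deltao}\bar K$ produces lower-order corrections absorbed into the same constants, since $|\Deltao|\cdot\norm{\Dif\bar K}$ is bounded by $\CDeltao\sigmaDK/(\gamma\delta^\tau)\norm{E_c}_\rho$. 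Finally, to control the extended torsion $\bar T_c$, I would estimate its correction block by block: the upper-left $n\times n$ block is $\bar T-T$, already controlled by $\CDeltaT$; the upper-right column $\homega$ is constant; and the lower-left row $\Dif c(\bar K)\bar N - \Dif c(K)N$ is controlled by
\[
\norm{\Dif c(\bar K)\bar N-\Dif c(K)N}_{\rho-3\delta}\leq\cteDc\norm{\bar N-N}_{\rho-3\delta}+\cteDDc\norm{\bar K-K}_{\rho-2\delta}\norm{N}_\rho,
\]
yielding a constant $\CDeltaTc$ and, via the Neumann argument of Lemma~\ref{lem:aux} with $M=\aver{T_c}$, the estimates \eqref{eq:T:iter1:iso} and \eqref{eq:est:DeltaT:iso}.

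The main obstacle will be the bookkeeping for $\bar E^\omega$: one must cancel the linear-in-$E$ contribution exactly, which requires the specific form of $\Dif c(K)L$ coming from the involution of $c$ with $(\H,p)$ and from Lemma~\ref{lem:cons:H:p}. Everything else is a systematic, if tedious, transposition of the ordinary-case computations to the extended unknown, with $\sigmaT\rightsquigarrow\sigmaTc$, the extra condition $|\xio|\ll\dist(\omega,\partial\Theta)/(|\omega|)$ to preserve Diophantineness, and with $\cteDDc$ providing the missing second-derivative bound needed only for $\bar E^\omega$.
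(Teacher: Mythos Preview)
Your proposal is essentially correct and follows the paper's approach (the same seven-step adaptation, the same use of Lemma~\ref{lem:upperT:iso}, the same block-by-block control of $\bar T_c$ via Lemma~\ref{lem:aux}). There is one concrete oversight: you assert that ``the upper-right column $\homega$ is constant,'' but $\homega=(\omega,0_{n-d})$ depends on $\omega$, which is being updated to $\bar\omega=(1-\xio)\omega$. Hence $\bar T_c-T_c$ picks up an additional contribution $|\bar\homega-\homega|=|\Deltao|\leq\CDeltao\norm{E_c}_\rho/(\gamma\delta^\tau)$ in its first block row, and this term must be included in $\CDeltaTc$ (the paper's \eqref{eq:CDeltaTc} has exactly $\CDeltaT/(\gamma^2\delta^{2\tau+1})+\CDeltao/(\gamma\delta^\tau)$ in the first entry of the max). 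This is minor and easily patched, but without it the bound on $\aver{\bar T_c}-\aver{T_c}$, and hence \eqref{eq:est:DeltaT:iso}, is incomplete.

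A second, smaller imprecision: in the new error $\bar E$ there is a genuinely quadratic term $\Lie{\Deltao}\DeltaK=-\xio\Lie{\omega}\DeltaK$ (see the paper's \eqref{eq:CE:iso}) that is separate from both $\Elin$ and $\Delta^2X$; your description folds this into the ``extra term $\Lie{\Deltao}K$'' discussion, but $\Lie{\Deltao}K$ is linear and already absorbed into the cohomological equation, whereas $\Lie{\Deltao}\DeltaK$ is the residual quadratic piece that survives in $\bar E$ and must be estimated via $\norm{\Lie{\omega}\DeltaK}_{\rho-2\delta}$.
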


\begin{proof}
The proof of this result is parallel to the ordinary situation.
On the one hand,
those
constants that must be changed in this result (e.g. $\CxiNO$)
will be redefined using the symbol ``$:=$'' and
will be included in Table \ref{tab:constants:all:2:iso}.
On the other hand,
those constants that are not
redefined (e.g. $\CDeltaK$) will have the same expression that
in the proof of Lemma \ref{lem:KAM:inter:integral} and the
reader is referred to Table \ref{tab:constants:all:2} for the
corresponding label in the text.

\bigskip

\paragraph
{\emph{Step 1: Control of the new parameterization}.}
We start by considering the new objects $\bar K(\theta)=K(\theta) + \DeltaK(\theta)$
and $\bar \omega = \omega+\Deltao$, obtained 
from the expressions \eqref{eq:choice:DK:iso}
and \eqref{eq:choice:Do:iso}, using the solutions of
the system
\eqref{eq:syst:iso:peich} taking the objects
\eqref{eq:etaL:corr:iso}
\eqref{eq:etaN:corr:iso}
\eqref{eq:etao:corr:iso} and
\eqref{eq:Tdown:corr:iso}.
We have
\[
\norm{\eta^L}_\rho \leq \CNT \cteOmega \norm{E}_\rho \,,
\qquad
\norm{\eta^N}_\rho \leq \CLT \cteOmega \norm{E}_\rho \,,
\qquad
\abs{\eta^\omega} \leq \abs{E^\omega} \,,
\]
and
\[
\norm{\Tdown}_{\rho-\delta} =
\norm{(\Dif c\circ K) N}_{\rho-\delta} \leq \norm{\Dif c}_\B \norm{N}_\rho \leq \cteDc \CN\,.
\]
Notice that
\[
\norm{E_c}_\rho = \max \{\norm{E}_\rho,|E^\omega|\}\,,
\]
so, from now on, we will use that $\norm{E}_\rho \leq \norm{E_c}_\rho$ and
$|E^\omega| \leq \norm{E_c}_\rho$.

In order to invoke Lemma \ref{lem:twist}
(we must fulfill condition \eqref{eq:fake:cond})
we include the inequality
\begin{equation}\label{eq:ingredient:iter:1:iso}
\frac{\norm{E_c}_\rho}{\delta} < \cauxT
\end{equation}
into Hypothesis \eqref{eq:cond1:K:iter:iso} (this corresponds to the first term in \eqref{eq:mathfrak1:iso}).
Hence, combining
Lemma \ref{lem:twist}
and
Lemma \ref{lem:upperT:iso},
we obtain estimates for the solution
of the cohomological equations
(we recall that $\xi_0^L=0_n$)
\begin{align}
|\xi_0^N|,|\xi^\omega| \leq {} &
\sigmaTc \max \Big\{\CNT \cteOmega + \frac{c_R}{\gamma \delta^\tau}
\CT \CLT \cteOmega \, , \, 1 + \frac{c_R}{\gamma \delta^\tau} \cteDc \CN \CLT
\cteOmega \Big\}\norm{E_c}_\rho \nonumber \\
& =: \frac{\CxiNO}{\gamma \delta^\tau}\norm{E_c}_\rho
=: \frac{\Cxio}{\gamma \delta^\tau}\norm{E_c}_\rho\,, \label{eq:CxiNO:iso}
\end{align}
and we observe that $\xi^L(\theta)$ and $\xi^N(\theta)$ are controlled
as in Lemma \ref{lem:KAM:inter:integral}, and so
are the objects $\xi(\theta)$, $\bar K(\theta)$ and $\bar K(\theta)-K(\theta)$,
thus obtaining the estimate in \eqref{eq:est:DeltaK:iso}.
Observing that 
\begin{equation}\label{eq:Comega}
\abs{\omega_*} < \abs{\omega} < \sigmao \abs{\omega_*} =: \Comega\,,
\end{equation}
we get the estimate \eqref{eq:est:Deltao:iso} as follows:
\begin{equation}\label{eq:CDeltao}
\abs{\bar \omega-\omega} = \abs{\omega \xi^\omega}< \frac{\Comega \Cxio}{\gamma \delta^\tau}
\norm{E_c}_\rho
=: \frac{\CDeltao}{\gamma \delta^\tau}
\norm{E_c}_\rho
\,.
\end{equation}

To obtain \eqref{eq:distB:iter1:iso}, we repeat the
computations in \eqref{eq:ingredient:iter:2}
using Hypothesis \eqref{eq:cond1:K:iter:iso} (this corresponds to
the seventh term in \eqref{eq:mathfrak1:iso}). Similarly,
we obtain \eqref{eq:disto:iter1:iso}:
\[
\dist (\bar\omega,\pd\Theta) \geq \dist(\omega,\pd\Theta) - \abs{\Deltao} \geq
\dist (\omega,\pd\Theta) - \frac{\CDeltao}{\gamma \delta^\tau} \norm{E_c}_\rho >0\,,
\]
where we used Hypothesis \eqref{eq:cond1:K:iter:iso} (this corresponds to
the eight term in \eqref{eq:mathfrak1:iso}). A direct consequence of the fact that 
the new frequency $\bar\omega$ is strictly contained in $\Theta$, and then
$\abs{\omega_*} < \abs{(1-\xi^\omega) \omega} < \sigmao \abs{\omega_*}$, is that 
we have
\begin{equation}\label{eq:prop:omega}
\frac{1}{\sigmao} < {1-\xi^\omega} < \sigmao\,,
\qquad
\frac{1}{\sigmao} < \frac{1}{1-\xi^\omega} < \sigmao\,.
\end{equation}

\bigskip

\paragraph
{\emph{Step 2: Control of the new error of invariance}.}

To control the error of invariance of the corrected parameterization $\bar K$, we
first consider the error in the solution of the linearized equation
\eqref{eq:lin1:iso}, that is
\begin{align*}
\Elin (\theta) := {} & \Ered(\theta) \xi(\theta) - \Omega_0 \Esym(\theta) \Lie{\omega}\xi(\theta)
+ 
\begin{pmatrix}
A(\theta)^\top \Elag(\theta) \\
-\Elag(\theta)
\end{pmatrix}
\hat\omega \xi^\omega
\,, \\
\Elin^\omega := {} &
\aver{
\begin{pmatrix}
\Dif ( \R{\omega}((\Dif c \circ K) E)) & 0_{n-d}^\top)
\end{pmatrix}
\xi^L} \,.
\end{align*}
First, we control $\Lie{\omega}\xi(\theta)$ in a similar fashion
as in Step 2 of the ordinary case,  
but using now that $\xi(\theta)$ is the solution of 
\eqref{eq:syst:iso:peich}. The only difference is that 
\begin{align}
\nonumber
\norm{\Lie{\omega} \xi^L}_{\rho-\delta}
={} &
\norm{\eta^L - T \xi^N - \hat\omega \xi^\omega}_{\rho-\delta} \\
\leq & \left(\CNT \cteOmega + \CT \frac{\CxiN}{\gamma \delta^\tau} + \Comega \frac{\Cxio}{\gamma \delta^\tau} \right)\norm{E}_\rho 
=: \frac{\CLiexiL}{\gamma \delta^{\tau}} \norm{E}_\rho \,,
 \label{eq:CLiexiL:iso}
\end{align}

Then, we control $\Elin(\theta)$ and $\Elin^\omega$ by
\begin{align}
\norm{\Elin}_{\rho-2\delta} \leq {} & 
\norm{\Ered}_{\rho-2\delta} \norm{\xi}_{\rho-2\delta} +
\cteOmega \norm{\Esym}_{\rho-2\delta} \norm{\Lie{\omega}\xi}_{\rho-2 \delta} \nonumber \\
& + \max \{ \norm{A}_{\rho-2\delta} \, , \, 1\} \norm{\Elag}_{\rho-2 \delta} \abs{\hat \omega} \abs{\xi^\omega} 
\nonumber \\
\leq {} &
\frac{\Cred}{\gamma \delta^{\tau+1}}
\frac{\Cxi}{\gamma^2 \delta^{2 \tau}}
\norm{E_c}_\rho^2
+
\cteOmega
\frac{\Csym}{\gamma \delta^{\tau+1}}
\frac{\CLiexi}{\gamma \delta^{\tau}}
\norm{E_c}_\rho^2 \nonumber  \\
& + \frac{\max\{ \CA \, , \, 1\} \Clag \Comega \CxiNO}{\gamma^2 \delta^{2\tau+1}}
\norm{E_c}_\rho^2
=: \frac{\Clin}{\gamma^3 \delta^{3\tau+1}} \norm{E_c}_\rho^2\,,
\label{eq:Clin:iso} \\
\abs{\Elin^\omega} \leq {} & \frac{d c_R \cteDc}{\gamma \delta^{\tau+1}} \norm{E_c}_\rho
\frac{\CxiL}{\gamma^2\delta^{2\tau}} \norm{E_c}_\rho =: \frac{\Clino}{\gamma^3 \delta^{3 \tau+1}}\norm{E_c}_\rho^2 \,.
\label{eq:Clino:iso}
\end{align}

After performing the correction, the 
total error associated with
the new parameterization is given by
(the computation is analogous to \eqref{eq:new:E:comp})
\[
\bar E_c(\theta) =
\begin{pmatrix}
\bar E(\theta) \\
\bar E^\omega
\end{pmatrix} =
\begin{pmatrix}
P(\theta) (I-\Omega_0 \Esym(\theta))^{-1} \Elin(\theta)+\Delta^2 X(\theta) + \Lie{\Deltao}\DeltaK(\theta) \\
\Elin^\omega + \aver{ \Delta^2 c(\theta)}
\end{pmatrix} \,,
\]
where $\Delta^2 X(\theta)$ is given by \eqref{eq:Delta2X} and
\begin{align*}
\Delta^2 c(\theta) = {} & \int_0^1 (1-t) \Dif^2 c(K(\theta)+t \DeltaK(\theta)) [\DeltaK(\theta),\DeltaK(\theta)] \dif t\,.
\end{align*}
Now we observe that
\[
\Lie{\Delta\omega} \DeltaK(\theta) = -\xio \Lie{\omega} \DeltaK(\theta) \,,
\]
where $\Lie{\omega} \DeltaK(\theta)$ is
controlled using the expression
\[
\Lie{\omega} \DeltaK(\theta)
= 
\Lie{\omega} L(\theta) \xi^L(\theta)
+L(\theta) \Lie{\omega} \xi^L(\theta)
+\Lie{\omega} N(\theta) \xi^N(\theta)
+N(\theta) \Lie{\omega}\xi^N(\theta) \,,
\]
thus obtaining
\begin{align}
\norm{\Lie{\omega} \Delta K}_{\rho-2\delta} \leq {} &
\left(
\frac{\CLieL \CxiL}{\gamma^2 \delta^{2\tau}} +
\frac{\CL \CLiexiL}{\gamma \delta^\tau} + \frac{\CLieN \CxiN}{\gamma \delta^{\tau}}
+\CN \CLiexiN
\right)\norm{E}_\rho
\nonumber \\
  =:{} & \frac{\CLieDeltaK}{\gamma^2 \delta^{2\tau}} \norm{E}_\rho\,. 
\label{eq:CLieDeltaK}
\end{align}
Hence, we have:
\begin{align}
\norm{\bar E}_{\rho-2\delta} \leq {} &\left(\frac{ 2(\CL+\CN)\Clin}{\gamma^3 \delta^{3\tau+1}} 
+ \frac{1}{2} \cteDDXH \frac{(\CDeltaK)^2}{\gamma^4 \delta^{4\tau}} 
+ \frac{ \Cxio \CLieDeltaK}{\gamma^3 \delta^{3\tau}}
\right)
\norm{E_c}_\rho^2  \nonumber \\ 
=: {} & \frac{\CE}{\gamma^4 \delta^{4\tau}} \norm{E_c}_\rho^2\,,
\label{eq:CE:iso}
\end{align}
where we used the second term in \eqref{eq:mathfrak1:iso}
(Hypothesis \eqref{eq:cond1:K:iter:iso}).
Moreover, we get 
\begin{align}
|\bar E^\omega| \leq & \left( \frac{\Clino}{\gamma^3 \delta^{3 \tau+1}} + \frac{1}{2} \cteDDc \frac{(\CDeltaK)^2}{\gamma^4 \delta^{4\tau}} \right) \norm{E_c}_\rho^2 =: \frac{\CEo}{\gamma^4 \delta^{4\tau}} \norm{E_c}_\rho^2\,
\label{eq:CEo:iso}
\end{align}
and, finally, 
\begin{equation}\label{eq:CEc:iso}
\norm{\bar E_c}_{\rho-2\delta} \leq 
\frac{\max \left\{
\CE \, , \, \CEo
\right\}}{\gamma^4 \delta^{4\tau}} \norm{E_c}_\rho^2 =:
\frac{\CEc}{\gamma^4 \delta^{4\tau}} \norm{E_c}_\rho^2\,.
\end{equation}
We have obtained the estimate \eqref{eq:E:iter1:iso}.
Notice that the second assumption in \eqref{eq:cond1:K:iter:iso}
and \eqref{eq:ingredient:iter:1:iso}
imply that
\begin{equation}\label{eq:barEvsE:iso}
\norm{\bar E_c}_{\rho-2\delta} < \norm{E_c}_\rho< \delta \cauxT\,.
\end{equation}

\bigskip

\paragraph
{\emph{Step 3, 4 and 5}.}
All arguments and computations presented in these steps
depend only on the invariance equation. Hence,
the control of the new frames $L(\theta)$, $N(\theta)$ and the new
transversality condition is exactly the same as in Lemma \ref{lem:KAM:inter:integral},
but replacing $\norm{E}_\rho$ by $\norm{E_c}_\rho$.
Specifically, we obtain the estimates
\eqref{eq:DK:iter1:iso} and \eqref{eq:DKT:iter1:iso}
using Hypothesis \eqref{eq:cond1:K:iter:iso} (they correspond to the
third and fourth term in \eqref{eq:mathfrak1:iso}, respectively).
We obtain the estimates \eqref{eq:B:iter1:iso} and \eqref{eq:est:DeltaB:iso}
following the computations in \eqref{eq:ingredient:iter:6} and \eqref{eq:CDeltaB}
(using the fifth term in \eqref{eq:mathfrak1:iso}).

\bigskip

\paragraph
{\emph{Step 6: Control of the action of left operator}.}
Notice that the action of $\Lie{\omega}$ is affected by
the change of the frequency, since now the natural operator to control
is $\Lie{\bar \omega}$. 
From now on, given any object $X$,
we introduce the operator
\begin{equation}\label{eq:overline:operator}
\DLieo X (\theta) := 
\Lie{\bar \omega} \bar X(\theta) - \Lie{\omega} X(\theta)
\end{equation}
for the convenience of notation.

The control of $\Lie{\bar \omega} \bar K$ is straightforward, since
\[
\norm{\Lie{\bar \omega} \bar K}_{\rho-2\delta} \leq
\norm{\bar E_c}_{\rho-2\delta}+\norm{X_\H \circ \bar K}_{\rho-2\delta}
\leq
\delta \cauxT + \cteXH = \CLieK\,,
\]
and similarly we obtain
$\norm{\Lie{\bar \omega} \bar L}_{\rho-3\delta}\leq \CLieL$
and
$\norm{\Lie{\bar \omega} \bar L^\top}_{\rho-3\delta}\leq \CLieLT$.
However, to control increments of the form
$\norm{\DLieo K}_{\rho-2\delta}$
we need to include an additional term.  
More specifically, 
\begin{align*}
\DLieo K(\theta)
= {} & \Lie{\Delta\omega}\bar K(\theta) + \Lie{\omega} \Delta K(\theta) 
= {}  -\frac{\xi^\omega}{1-\xi^\omega} \Lie{\bar\omega} \bar K(\theta) + \Lie{\omega} \Delta K(\theta)\,,
\end{align*}
where we use that $\bar\omega= (1-\xi^\omega)\omega$, and, from bounds \eqref{eq:prop:omega} and \eqref{eq:CLieDeltaK},
\begin{align}
\norm{\DLieo K}_{\rho-2\delta} \leq {} & 
\left(\frac{\sigmao \Cxio \CLieK}{\gamma \delta^\tau} + \frac{\CLieDeltaK}{\gamma^2 \delta^{2\tau}}\right)   \norm{E_c}_\rho =: \frac{\CDeltaLieK}{\gamma^2 \delta^{2\tau}} \norm{E_c}_\rho\,. 
\label{eq:CDeltaLieKiso}
\end{align}

We now observe that this is the only estimate that must be updated, since
it is the only place where cohomological equations play a role. For
example, we have
\begin{align*}
\DLieo L(\theta)
= {} &
\begin{pmatrix}
\DLieo \Dif K(\theta)
&
\DLieo(X_p \circ K)
\end{pmatrix} \\
= {} &
\begin{pmatrix}
\Dif (\DLieo K(\theta))
&
\Dif X_p \circ \bar K\, \DLieo K + (\Dif X_p \circ \bar K - \Dif X_p \circ K)\, \Lie{\omega} K
\end{pmatrix}
\end{align*}
and this expression yields formally to the same estimate in~\eqref{eq:CDeltaLieL},
but using the constants $\CDeltaLieK$, $\CDeltaK$ and $\CLieK$ defined in
this section (and replacing $E$ by $E_c$). This also affects to the control
of the objects 
$\DLieo L^\top$,
$\DLieo (G \circ K)$,
$\DLieo G_L$,
$\DLieo (\Omega \circ K)$,
$\DLieo \tilde \Omega_L$,
$\DLieo A$,
$\DLieo (J \circ K)$,
$\DLieo N^0$ and
$\DLieo N$.

\bigskip

\paragraph
{\emph{Step 7: Control of the new torsion condition}.}
Now we consider the control of the extended torsion matrix $T_c(\theta)$
and the corresponding non-degeneracy condition. First, we observe
that the upper-left block $T(\theta)$ is controlled as in
Lemma \ref{lem:KAM:inter:integral}. 
Thus, we control the extended torsion as
\begin{align}
\norm{\bar T_c - T_c}_{\rho-3\delta}
\leq {} & \max
\left\{
\frac{\CDeltaT}{\gamma^2 \delta^{2\tau+1}} + \frac{\CDeltao}{\gamma \delta^\tau}
\, , \,
\frac{\cteDc \CDeltaN}{\gamma^2 \delta^{2\tau+1}}+
\frac{\cteDDc \CDeltaK \CN }{\gamma^2 \delta^{2\tau}}
\right\}
\norm{E_c}_\rho \nonumber \\
=: {} & \frac{\CDeltaTc}{\gamma^2 \delta^{2\tau+1}} \norm{E_c}_\rho\,. \label{eq:CDeltaTc}
\end{align}
Finally, we obtain the estimates \eqref{eq:T:iter1:iso} and \eqref{eq:est:DeltaT:iso}
by adapting the computations in \eqref{eq:ingredient:iter:8} and \eqref{eq:CDeltaTOI}.
We use the second term in \eqref{eq:mathfrak1:iso}
(Hypothesis \eqref{eq:cond1:K:iter:iso}) to get
the estimate
\begin{equation}\label{eq:CDeltaTcOI}
|\aver{\bar T_c}^{-1} - \aver{T_c}^{-1}| \leq \frac{2 (\sigmaTc)^2 \CDeltaTc}{\gamma^2 \delta^{2\tau+1}}
\norm{E}_\rho =: \frac{\CDeltaTcOI}{\gamma^2 \delta^{2\tau+1}}
\norm{E}_\rho\,.
\end{equation}
This completes the proof of the lemma.
\end{proof}

\subsection{Convergence of the iterative process}\label{ssec:proof:KAM:iso}

Now we are ready to proof our second KAM theorem with conserved quantities.
Again, we comment the differences with respect to Theorem \ref{theo:KAM}
and omit the common parts.

\begin{proof}
[Proof of Theorem \ref{theo:KAM:iso}]
Let us consider the approximate torus $K_0:=K$ with frequency $\omega_0:=\omega$ and
with initial errors 
$E_0:=E$ and $E_0^\omega = E^\omega$. We
also introduce $B_0:=B$, $T_0:=T$, $T_{c,0}=T_c$ and $E_{c,0}=E_c$
associated with the initial approximation. We reproduce the
iterative construction in the proof of Theorem \ref{theo:KAM}, but
applying Lemma \ref{lem:KAM:inter:integral:iso} recursively, and
taking intro account the evolution of the error $E_{c,s}$ at
the $s$-step of the quasi-Newton method. 

Computations are the same mutatis mutandis. In this case, we
need to consider additional computations regarding the 
correction of the frequency. 
In particular, the eight term in \eqref{eq:cond1:K:iter:iso} is
checked as follows
\[
\dist (\omega_s,\partial \Theta) - \frac{\CDeltao \norm{E_{c,s}}_{\rho_s}}{\gamma \delta_s^{\tau}} > 
\dist (\omega_0,\partial \Theta) - \frac{\CDeltao \norm{E_{c,0}}_{\rho_0}}{\gamma \delta_0^{\tau}}
\frac{1}{1-a_1^{-3\tau}} > 0\,,
\]
where the last inequality is included into \eqref{eq:KAM:HYP:iso}.

Having guaranteed all hypothesis of Lemma \ref{lem:KAM:inter:integral:iso},
we collect the assumptions by introducing the constant $\mathfrak{C}_1$ as
\begin{equation}\label{eq:cte:mathfrakC1:iso}
\mathfrak{C}_1:=\max
\left\{
(a_1 a_3)^{4\tau} \CEc \, , \, (a_3)^{2\tau+1} \gamma^2 \rho^{2\tau-1} \CDeltatot
\right\}
\end{equation}
where
\begin{align}
\CDeltaI := {} &
\max
\Bigg\{
\frac{d \CDeltaK}{\sigmaDK-\norm{\Dif K}_{\rho}} \, , \,
\frac{2n \CDeltaK}{\sigmaDKT-\norm{(\Dif K)^\top}_{\rho}} \, , \label{eq:CDeltaI:iso}
\\
& \qquad\qquad\qquad\qquad  \frac{\CDeltaB}{\sigmaB-\norm{B}_{\rho}} \, , \,
\frac{\CDeltaTcOI}{\sigmaTc-\abs{\aver{T_{c}}^{-1}}}
\Bigg\} \,, \nonumber \\
\CDeltaII := {} & \frac{\CDeltaK \delta}{\dist(K(\TT^d_{\rho}),\partial \B)}\,,
\label{eq:CDeltaII:iso} \\
\CDeltaIII := {} & \frac{\CDeltao \gamma \delta^{\tau+1}}{\dist(\omega,\partial \Theta)}\,,
\label{eq:CDeltaIII:iso} \\
\CDeltatot := {} & \max
\Bigg\{
\frac{\gamma^2 \delta^{2\tau}}{\cauxT} \, , \,
2 \Csym \gamma \delta^\tau \, , \,
\frac{\CDeltaI}{1-a_1^{1-2\tau}} \, , \, 
\frac{\CDeltaII}{1-a_1^{-2\tau}} \, , \,
\frac{\CDeltaIII}{1-a_1^{-3\tau}}
\Bigg\}
\label{eq:CDeltatot:iso} \,.
\end{align}
Note that we recovered the original notation $K=K_0$, $\omega=\omega_0$, $B=B_0$, $T_c=T_{c,0}$, $\rho=\rho_0$
and $\delta=\delta_0$ for the original objects.

Therefore, by induction, we can apply the iterative process infinitely many times.
Indeed, we have
\[
\norm{E_{c,s}}_{\rho_s} < a_1^{-4 \tau s} \norm{E_c}_{\rho} \longrightarrow  0\
\quad \mbox{when} \quad s\rightarrow 0\,
\]
so the iterative scheme converges to a true quasi-periodic torus $K_\infty$ with frequency
$\omega_\infty$.
As a result of the output of Lemma \ref{lem:KAM:inter:integral:iso}, 
these objects satisfy
$K_\infty \in \Anal(\TT^{d}_{\rho_\infty})$, $\omega_\infty\in \Theta$ and
\[
\norm{\Dif K_\infty}_{\rho_\infty} < \sigmaDK\,,
\qquad
\norm{(\Dif K_\infty)^\top}_{\rho_\infty} < \sigmaDKT\,,
\qquad
\dist(K_\infty(\TT^d),\pd \B) > 0\,.
\]
Furthermore, we control the
the limit objects as follows:
\begin{align}
\norm{K_\infty - K}_{\rho_\infty} < \frac{\CDeltaK \norm{E_c}_{\rho}}{\gamma^2 \delta^{2\tau}}
\frac{1}{1-a_1^{-2\tau}} 
=: {} &  \frac{\mathfrak{C}_2 \norm{E_c}_{\rho}}{\gamma^2 \rho^{2\tau}}\,,
\label{eq:Cmathfrak2:iso} \\
\abs{\omega_\infty - \omega} <
\frac{\CDeltao \norm{E_c}_{\rho}}{\gamma \delta^{\tau}} 
\frac{1}{1-a_1^{-3\tau}}
=: {} &
\frac{\mathfrak{C}_3 \norm{E_c}_{\rho}}{\gamma \rho^{\tau}} \,,
\label{eq:Cmathfrak3:iso}
\end{align}
thus obtaining the estimates in \eqref{eq:close:iso}.
This completes the proof of the generalized iso-energetic KAM theorem.
\end{proof}

\section{Acknowledgements}
A. H. acknowledges the Spanish grants MTM2015-67724-P
(MINECO/FEDER), MDM-2014-0445 (MINECO) and 2014 SGR 1145 (AGAUR),
and the European Union's Horizon 2020 research and innovation programme
MSCA 734557.
A. L. acknowledges the Spanish grants 
MTM2016-76072-P (MINECO/FEDER) and
SEV-2015-0554 (MINECO, Severo Ochoa Programme for Centres of Excellence
in R\&D), and the ERC Starting Grant~335079. We are also grateful to
D. Peralta-Salas and
J.-Ll. Figueras for fruitful discussions.

\bibliographystyle{plain}
\bibliography{references}

\appendix

\section{An auxiliary lemma to control the inverse of a matrix}

To prove Lemmas \ref{lem:KAM:inter:integral} and
\ref{lem:KAM:inter:integral:iso}, we control the correction
of inverses of matrices several times using Neumann series
This affects to the estimates in 
\eqref{eq:est:DeltaB}, \eqref{eq:est:DeltaT},
\eqref{eq:est:DeltaB:iso} and \eqref{eq:est:DeltaT:iso}.
For convenience, we present the following auxiliary result separately.
Notice that the result is presented for matrices but
it is directly extended for matrix-valued maps with the
corresponding norm (see Section \ref{ssec-anal-prelims}).

\begin{lemma}\label{lem:aux}
Let $M \in \CC^{n \times n}$ be an invertible matrix
satisfying $\abs{M^{-1}} < \sigma$. Assume that $\bar M \in \CC^{n \times n}$
satisfies
\begin{equation}\label{eq:lem:aux}
\frac{2 \sigma^2 \abs{\bar M-M}}{\sigma-\abs{M^{-1}}} \leq 1\,.
\end{equation}
Then, we have that $\bar M$ is invertible and
\[
\abs{\bar M^{-1}-M^{-1}} < 2 \sigma^2 \abs{\bar M-M}\,,
\qquad
\abs{\bar M^{-1}} < \sigma\,.
\]
\end{lemma}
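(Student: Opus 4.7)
The plan is to use the standard Neumann series argument to invert the perturbation $\bar M = M + (\bar M - M)$. Writing $\bar M = M (I + M^{-1}(\bar M - M))$, invertibility of $\bar M$ will follow once we verify $|M^{-1}(\bar M - M)| < 1$. To this end, set $\alpha := |M^{-1}| < \sigma$ and $\beta := |\bar M - M|$. The hypothesis \eqref{eq:lem:aux} rewrites as $\beta \leq (\sigma - \alpha)/(2\sigma^2)$, from which I would immediately deduce
\[
\alpha\beta \;\leq\; \frac{\alpha(\sigma - \alpha)}{2\sigma^2} \;<\; \frac{\sigma - \alpha}{2\sigma} \;<\; \frac{1}{2},
\]
so the Neumann series
\[
\bar M^{-1} \;=\; \sum_{k \geq 0} \bigl(-M^{-1}(\bar M - M)\bigr)^k M^{-1}
\]
converges absolutely.

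Next I would use the identity $\bar M^{-1} - M^{-1} = -M^{-1}(\bar M - M)\bar M^{-1}$ (obtained by subtracting the first term of the series, or equivalently from $\bar M^{-1} - M^{-1} = \bar M^{-1}(M - \bar M)M^{-1}$) together with the geometric-series bound $|\bar M^{-1}| \leq \alpha/(1 - \alpha\beta)$ to conclude
\[
|\bar M^{-1} - M^{-1}| \;\leq\; \frac{\alpha^2 \beta}{1 - \alpha\beta}.
\]
Since $\alpha < \sigma$ and $\alpha\beta < 1/2$ from the previous step, the prefactor $\alpha^2/(1-\alpha\beta)$ is strictly less than $\sigma^2 / (1/2) = 2\sigma^2$, yielding the first claimed estimate $|\bar M^{-1} - M^{-1}| < 2\sigma^2 \beta$.

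Finally, the bound $|\bar M^{-1}| < \sigma$ follows directly by the triangle inequality and the hypothesis:
\[
|\bar M^{-1}| \;\leq\; |M^{-1}| + |\bar M^{-1} - M^{-1}| \;<\; \alpha + 2\sigma^2 \beta \;\leq\; \alpha + (\sigma - \alpha) \;=\; \sigma.
\]
There is no real obstacle here; the only subtlety is tracking the strict vs.\ non-strict inequalities carefully so that the final conclusions come out as strict, which is handled by noting that $\alpha < \sigma$ is strict (so $\alpha\beta < 1/2$ strictly, hence $\alpha^2/(1-\alpha\beta) < 2\sigma^2$ strictly).
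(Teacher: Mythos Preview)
Your proof is correct and follows essentially the same route as the paper: both write $\bar M = M(I + M^{-1}(\bar M-M))$, use the hypothesis to show $|M^{-1}||\bar M-M| < 1/2$, invoke the Neumann series to obtain the bound $|\bar M^{-1}-M^{-1}| \le \alpha^2\beta/(1-\alpha\beta) < 2\sigma^2\beta$, and finish with the same triangle-inequality step $\alpha + 2\sigma^2\beta \le \alpha + (\sigma-\alpha) = \sigma$. The only cosmetic difference is how the inequality $\alpha\beta < 1/2$ is reached (you bound $\beta$ first, the paper bounds $\alpha$ first), but the substance is identical.
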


\begin{proof}
A direct computation shows that
\begin{equation}\label{eq:Neu1}
\bar M^{-1} = (I_n + M^{-1} (\bar M-M))^{-1} M^{-1}\,.
\end{equation}
By hypothesis \eqref{eq:lem:aux} we obtain
\begin{equation}\label{eq:Neu2}
\abs{M^{-1}}\abs{\bar M-M} < \frac{\sigma^2 \abs{\bar M-M}}{\sigma-\abs{M^{-1}}} < \frac{1}{2}\,.
\end{equation}
Then a Neumann series argument in \eqref{eq:Neu1}, using \eqref{eq:Neu2} and $\abs{M^{-1}} < \sigma$, yields
the estimate
\[
\abs{\bar M^{-1}-M^{-1}} \leq \frac{\abs{M^{-1}}^2 \abs{\bar M-M}}{1-\abs{M^{-1}} \abs{\bar M-M}} <
2 \sigma^2 \abs{\bar M-M}\,.
\]
Finally, we conclude that
\begin{align*}
\abs{\bar M^{-1}} \leq  {} & \abs{M^{-1}} + \abs{\bar M^{-1}-M^{-1}} \leq \abs{M^{-1}} +
 2 \sigma^2 \abs{\bar M-M} \\
< {} & \abs{M^{-1}} + \sigma - \abs{M^{-1}} = \sigma\,,
\end{align*}
where we used again \eqref{eq:lem:aux}.
\end{proof}

\section{Compendium of constants involved in the KAM theorem}\label{ssec:consts}

In this appendix we collect the recipes to compute all constants
involved in the different estimates presented in the paper. Keeping
track of these constants is crucial to apply the presented KAM
theorems in particular problems and for concrete values of
parameters. In addition, we think that the labels included in the tables 
will be of valuable help assisting the reading
of the paper. Thus,
the reader can find the place where a particular object is estimated.

Given an object $X :\TT^n_{\rho_*} \to \CC^{n_1 \times n_2}$, the following tables
code an estimate of the form
\[
\norm{X}_{\rho_*} \leq\frac{C_X}{\gamma^{a_*} \delta^{b_*}} \norm{E_*}_\rho^{c_*}\,.
\]
Notice that the strip $\rho_*$ and
the exponents $a_*, b_*, c_*$ can be tracked following the corresponding label;
and $E_*$ is the target error ($E_*=E$ for Theorem \ref{theo:KAM}
and $E_*=E_c$ for Theorem \ref{theo:KAM:iso}).

Let us remark that, as it becomes clear in the proof, the numbers
$a_1$, $a_2$, $a_3$ and $\cauxT$ are independent parameters, that can
be selected in order to optimize the applicability of the theorems
depending on the particular problem at hand.

\begin{remark}
Table \ref{tab:constants:all} corresponds to the geometric
construction that is common to both theorems. The constants
associated to the ordinary KAM theorem \ref{theo:KAM}
are presented in Tables
\ref{tab:constants:all:2} and
\ref{tab:constants:all:3}.
The constants associated to the iso-energetic KAM theorem \ref{theo:KAM:iso}
are presented in Tables
\ref{tab:constants:all:2:iso} and
\ref{tab:constants:all:3:iso}.
To reduce the length of the tables, in the iso-energetic situation
we have omitted those constants that have the same formula that in
the ordinary case. 
\end{remark}

\newpage

\bgroup
{\scriptsize
\def\arraystretch{1.5}
\begin{longtable}{|l l l l|}
\caption{Constants introduced in Section \ref{sec:lemmas}. 
Constants with $^*$ in the label are $0$ in {\bf Case III}. 
We denote by $\chi_0$ the
characteristic function of the set $\{0\}$.} 
\label{tab:constants:all} \\
\hline
Object & Constant & Label & Result\\
\hline
\hline
$\Lie{\omega}\Omega_K$ &
$\CLieOmegaK = 2 n \cteOmega \sigmaDK+ \sigmaDKT \cteDOmega \sigmaDK \delta+ d\sigmaDKT \cteOmega$ & 
\eqref{eq:CLieOK} & Lemma \ref{lem:isotrop}\\
\hline
$\Omega_K$ & $\COmegaK = c_R \CLieOmegaK$ & \eqref{eq:COK} & Lemma \ref{lem:isotrop}\\
\hline
$L$ & $\CL =  \sigmaDK+\cteXp$ & \eqref{eq:CL} & Lemma \ref{lem:Lang} \\
\hline
$L^\top$ & $\CLT =   \max\{\sigmaDKT \ , \ \cteXpT\}$ & \eqref{eq:CLT} & Lemma \ref{lem:Lang} \\
\hline
$\Elag$ & $\Clag = c_R \max\{ \CLieOmegaK + \cteDpT \ , \ d \cteDp \}$ & \eqref{eq:Clag} & Lemma \ref{lem:Lang}\\
\hline
$G_L$ & $\CGL =  \CLT \cteG \CL$ & \eqref{eq:CGL} & Lemma \ref{lem:Lang} \\
\hline
$\tilde \Omega_L$ & $\CtOmegaL = \CLT \ctetOmega \CL$ & \eqref{eq:CtOmegaL} & Lemma \ref{lem:Lang} \\
\hline
$N^0$ & $\CNO = \cteJ \CL$ & \eqref{eq:CNO} & Lemma \ref{lem:sympl} \\
\hline
$(N^0)^\top$ & $\CNOT = \CLT \cteJT$ & \eqref{eq:CNOT} & Lemma \ref{lem:sympl} \\
\hline
$A$ & $\CA = \tfrac{1}{2} (\sigmaB)^2 \, \CtOmegaL$
 & \eqref{eq:cA}$^*$ & Lemma \ref{lem:sympl} \\
\hline
$N$ & $\CN = \CL \CA + \CNO \sigmaB$ & \eqref{eq:CN} & Lemma \ref{lem:sympl} \\
\hline
$N^\top$ & $\CNT = \CA \CLT + \sigmaB \CNOT$ & \eqref{eq:CNT} & Lemma \ref{lem:sympl} \\
\hline
$\Esym$ & 
$\Csym =  (1+\CA) \max\{1,\CA +  (\sigmaB)^2 \chi_0(C_A) \} \, \CtOmegaL$
 & \eqref{eq:Csym} & Lemma \ref{lem:sympl} \\
\hline
$\Lie{\omega}K$ & $\CLieK = \delta \cauxT + \cteXH$ & \eqref{eq:CLieK} & Lemma \ref{lem:twist} \\
\hline
$\Lie{\omega}L$ & $\CLieL = d \cauxT + \cteDXH \sigmaDK + \cteDXp \CLieK$ & \eqref{eq:CLieL} & Lemma \ref{lem:twist} \\
\hline
$\Lie{\omega}L^\top$ & $\CLieLT = \max \{2n \cauxT + \cteDXHT \sigmaDK \, , \, \cteDXpT \CLieK \}$ & \eqref{eq:CLieLT} & Lemma \ref{lem:twist} \\
\hline
$\Lie{\omega} (J\circ K)$ & $\CLieJ = \cteDJ \CLieK$ & \eqref{eq:CLieJ} & Lemma \ref{lem:twist} \\
\hline
$\Lie{\omega} (G\circ K)$ & $\CLieG = \cteDG \CLieK$ & \eqref{eq:CLieG} & Lemma \ref{lem:twist} \\
\hline
$\Lie{\omega} (\tilde \Omega\circ K)$ & $\CLietOmega = \cteDtOmega \CLieK$ & \eqref{eq:CLietOmega} & Lemma \ref{lem:twist} \\
\hline
$\Lie{\omega} N^0$ & $\CLieNO = \CLieJ \CL+\cteJ \CLieL$ & \eqref{eq:CLieNO} & Lemma \ref{lem:twist} \\
\hline
$\Lie{\omega} G_L$ & 
$\CLieGL = \CLieLT \cteG \CL + \CLT \CLieG \CL + \CLT \cteG 
\CLieL$ & \eqref{eq:CLieGL} & Lemma \ref{lem:twist} \\
\hline
$\Lie{\omega} \tilde \Omega_L$ & 
$\CLietOmegaL = \CLieLT \ctetOmega \CL + \CLT \CLietOmega \CL + \CLT \ctetOmega 
\CLieL$ & \eqref{eq:CLietOmegaL} & Lemma \ref{lem:twist} \\
\hline
$\Lie{\omega} B$ & 
$\CLieB = (\sigmaB)^2 \CLieGL$ & \eqref{eq:CLieB} & Lemma \ref{lem:twist} \\
\hline
$\Lie{\omega} A$ &  $\CLieA = \CLieB \CtOmegaL \sigmaB + \tfrac{1}{2} (\sigmaB)^2 \CLietOmegaL$
& \eqref{eq:CLieA}$^*$ & Lemma \ref{lem:twist} \\
\hline
$\Lie{\omega}N$ &
$\CLieN = \CLieL \CA + \CL \CLieA + \CLieNO \sigmaB + \CNO \CLieB$ &
\eqref{eq:CLieN} & Lemma \ref{lem:twist} \\
\hline
$\Loper{L}$ & $\CLoperL = d + \cteDXp \delta$ & \eqref{eq:CLoperL} & Lemma \ref{lem:twist} \\
\hline
$\Loper{L}^\top$ & $\CLoperLT = \max\{ 2n  \, , \, \cteDXpT \delta\}$ & \eqref{eq:CLoperLT} & Lemma \ref{lem:twist} \\
\hline
$\Loper{N}$ & {$\!\begin{aligned} 
\CLoperN = 
{} &
\CLoperL \cauxT \CA+\cteDXH \CNO \sigmaB + \CL \CLieA \\
& + \CLieNO \sigmaB + \CNO \CLieB
\end{aligned}$}
$\vphantom{
\left\{\begin{array}{l}
0 \\
0 
\end{array}\right.}$
& \eqref{eq:CLoperN} & Lemma \ref{lem:twist} \\
\hline
$T$ & $\CT = \CNT \cteOmega \CLoperN$ & \eqref{eq:CT} & Lemma \ref{lem:twist} \\
\hline
$\Lie{\omega} \Omega_L$ & $\CLieOmegaL = \max\{ \CLieOmegaK + \cteDpT\,,\, d \cteDp \}$ &
\eqref{eq:CLieOmegaL} & Lemma \ref{lem:reduc} \\
\hline
$\Ered^{1,1}$ & $\Creduu = \CNT \cteOmega \CLoperL$ & \eqref{eq:Creduu} & Lemma \ref{lem:reduc} \\
\hline
$\Ered^{2,1}$ & $\Creddu = \CLT \cteOmega \CLoperL$ & \eqref{eq:Creddu} & Lemma \ref{lem:reduc} \\
\hline
$\Ered^{2,2}$ & 
{$\!\begin{aligned} 
\Creddd =
{} &
 (\CLT \cteDOmega \CN \delta + \CLoperLT \cteOmega \CN + \CLieOmegaL \CA) \gamma \delta^\tau \\
& + \Clag \CLieA
\end{aligned}$}
$\vphantom{
\left\{\begin{array}{l}
0 \\
0 
\end{array}\right.}$
& \eqref{eq:Creddd} & Lemma \ref{lem:reduc} \\
\hline
$\Ered$ &
$\Cred =
\max \{
\Creduu \gamma \delta^{\tau}
\, , \,
\Creddu \gamma \delta^{\tau} + \Creddd
\}$
& \eqref{eq:CEred} & Lemma \ref{lem:reduc} \\
\hline
\end{longtable}}
\egroup

\newpage

\bgroup
{\tiny
\def\arraystretch{1.5}
\begin{longtable}{|l l l|}
\caption{\scriptsize Constants introduced in Lemma \ref{lem:KAM:inter:integral}.
Given an object $X$, we use the notation $\Delta \LieO X = \Lie{\omega} \bar X-\Lie{\omega} X$.
Constants with $^*$ in the label are $0$ in {\bf Case III}. 
} \label{tab:constants:all:2} \\
\hline
Object & Constant & Label \\
\hline
\hline
$\xi_0^N$ & $\CxiNO = \sigmaT(\CNT \cteOmega \gamma \delta^\tau + c_R \CT \CLT \cteOmega)$
& \eqref{eq:CxiNO} \\
\hline
$\xi^N$ &
$\CxiN = \CxiNO + c_R \CLT \cteOmega$ & \eqref{eq:CxiN} \\ 
\hline
$\xi^L$ &
$\CxiL = c_R (\CNT \cteOmega \gamma \delta^\tau + \CT \CxiN)$ & \eqref{eq:CxiL} \\
\hline
$\xi$ & 
$\Cxi = \max \{\CxiL \, , \, \CxiN \gamma \delta^\tau\}$ & \eqref{eq:Cxi} \\
\hline
$\DeltaK$ &
$\CDeltaK = \CL \CxiL + \CN \CxiN \gamma \delta^\tau$ & \eqref{eq:CDeltaK} \\
\hline
$\Lie{\omega} \xi^N$ &
$\CLiexiN=  \CLT \cteOmega$
& \eqref{eq:CLiexiN} \\
\hline
$\Lie{\omega} \xi^L$ &
$\CLiexiL= \CNT \cteOmega \gamma \delta^\tau + \CT \CxiN$
& \eqref{eq:CLiexiL} \\
\hline
$\Lie{\omega} \xi$ &
$\CLiexi = \max \{ \CLiexiL \, , \, \CLiexiN \gamma \delta^\tau\}$
& \eqref{eq:CLiexi} \\
\hline
$\Elin$ &
$\Clin = \Cred \Cxi + \cteOmega \Csym \CLiexi \gamma \delta^\tau$ &
\eqref{eq:Clin} \\
\hline
$\bar E$ &
$\CE = 2(\CL +\CN) \Clin \gamma \delta^{\tau-1} + \tfrac{1}{2} \cteDDXH (\CDeltaK)^2$ & \eqref{eq:CE} \\
\hline
$\DeltaL$ &
$\CDeltaL = (d + \cteDXp\delta ) \CDeltaK$ &
\eqref{eq:CDeltaL} \\
\hline
$\DeltaLT$ &
$\CDeltaLT = \max \{ 2n \, , \cteDXpT \delta \,\} \CDeltaK$ &
\eqref{eq:CDeltaLT} \\
\hline
$\DeltaG$ & $\CDeltaG = \cteDG \CDeltaK$ & \eqref{eq:CDeltaG} \\
\hline
$\DeltaGL$ &
$\CDeltaGL = \CLT \cteG \CDeltaL + \CLT \CDeltaG \CL \delta + \CDeltaLT \cteG \CL$ & \eqref{eq:CDeltaGL} \\
\hline
$\DeltaB$ &
$\CDeltaB = 2 (\sigmaB)^2 \CDeltaGL$ & \eqref{eq:CDeltaB} \\
\hline
$\DeltatOmega$ & $\CDeltatOmega = \cteDtOmega \CDeltaK$ & \eqref{eq:CDeltatOmega} \\
\hline
$\DeltatOmegaL$ &
$\CDeltatOmegaL = \CLT \ctetOmega \CDeltaL + \CLT \CDeltatOmega \CL \delta + \CDeltaLT \ctetOmega \CL$ &
\eqref{eq:CDeltatOmegaL} \\
\hline
$\DeltaA$ &
$\CDeltaA = 
\sigmaB \CtOmegaL \CDeltaB + \tfrac{1}{2} (\sigmaB)^2 \CDeltatOmegaL
$
& \eqref{eq:CDeltaA}$^*$ \\
\hline
$\Delta J$ & $\CDeltaJ = \cteDJ \CDeltaK$ & \eqref{eq:CDeltaJ} \\
\hline
$\Delta J^\top$ & $\CDeltaJT = \cteDJT \CDeltaK$ & \eqref{eq:CDeltaJT} \\
\hline
$\DeltaNO$ &
$\CDeltaNO = \cteJ \CDeltaL + \CDeltaJ \CL \delta$
& \eqref{eq:CDeltaNO} \\
\hline
$\DeltaNOT$ &
$\CDeltaNOT = \CDeltaLT \cteJT + \CLT \CDeltaJT \delta$ & \eqref{eq:CDeltaNOT} \\
\hline
$\DeltaN$ & 
$\CDeltaN = \CL \CDeltaA + \CDeltaL \CA + \CNO \CDeltaB + \CDeltaNO \sigmaB$ & \eqref{eq:CDeltaN} \\
\hline
$\DeltaNT$ &
$\CDeltaNT = \CA \CDeltaLT + \CDeltaA \CLT + \CDeltaB \CNOT + \sigmaB \CDeltaNOT$ & \eqref{eq:CDeltaNT} \\
\hline
$\Lie{\omega}\DeltaK$ &
$\CDeltaLieK = 
\CLieL \CxiL+ ( \CL \CLiexiL +  \CLieN \CxiN ) \gamma \delta^{\tau}
+\CN \CLiexiN\gamma^2 \delta^{2\tau}$
& \eqref{eq:CDeltaLieK} \\
\hline
$\DeltaLieL$ &
$\CDeltaLieL = d \CDeltaLieK + \cteDXp \CDeltaLieK \delta + \cteDDXp \CDeltaK \CLieK \delta$ & \eqref{eq:CDeltaLieL} \\
\hline
$\DeltaLieLT$ &
$\CDeltaLieLT = \max\{ 2n \CDeltaLieK\, , \, \cteDXpT \CDeltaLieK \delta+\cteDDXpT \CDeltaK \CLieK \delta \}$ & \eqref{eq:CDeltaLieLT} \\
\hline
$\DeltaLieG$ & $\CDeltaLieG = \cteDG \CDeltaLieK + \cteDDG \CDeltaK \CLieK$ & \eqref{eq:CDeltaLieG} \\
\hline
$\DeltaLieGL$ & {$\!\begin{aligned} 
\CDeltaLieGL = {} &
\CLieLT \cteG \CDeltaL + \CLieLT \cteDG \CDeltaK \CL \delta + \CDeltaLieLT \cteG \CL \\
& +\CLT \cteDG \CLieK \CDeltaL + \CLT \CDeltaLieG \CL \delta + \CDeltaLT \cteDG \CLieK \CL \\
& + \CLT \cteG \CDeltaLieL + \CLT \cteDG \CDeltaK \CLieL \delta + \CDeltaLT \cteG \CLieL\,, 
\end{aligned}$}
$\vphantom{
\left\{\begin{array}{l}
0 \\
0 \\
0
\end{array}\right.}$
& \eqref{eq:CDeltaLieGL} \\
\hline
$\DeltaLieB$ &
$\CDeltaLieB = 
2 \sigmaB \CLieGL \CDeltaB + (\sigmaB)^2 \CDeltaLieGL$
& \eqref{eq:CDeltaLieB} \\
\hline
$\DeltaLietOmega$ & $\CDeltaLietOmega = \cteDtOmega \CDeltaLieK + \cteDDtOmega \CDeltaK \CLieK$ & \eqref{eq:CDeltaLietOmega} \\
\hline
$\DeltaLietOmegaL$ & {$\!\begin{aligned} 
\CDeltaLietOmegaL = {} &
\CLieLT \ctetOmega \CDeltaL + \CLieLT \cteDtOmega \CDeltaK \CL \delta + \CDeltaLieLT \ctetOmega \CL \\
& +\CLT \cteDtOmega \CLieK \CDeltaL + \CLT \CDeltaLietOmega \CL \delta + \CDeltaLT \cteDtOmega \CLieK \CL \\
& + \CLT \ctetOmega \CDeltaLieL + \CLT \cteDtOmega \CDeltaK \CLieL \delta + \CDeltaLT \ctetOmega \CLieL\,, 
\end{aligned}$}
$\vphantom{
\left\{\begin{array}{l}
0 \\
0 \\
0
\end{array}\right.}$
& \eqref{eq:CDeltaLietOmegaL} \\
\hline
$\DeltaLieA$ & 
{$\!\begin{aligned} 
\CDeltaLieA = {} &
\CLieB \CtOmegaL \CDeltaB + \CLieB \CDeltatOmegaL \sigmaB 
+ \CDeltaLieB \CtOmegaL \sigmaB \\ &~~+ \sigmaB \CLietOmegaL \CDeltaB 
+ \tfrac{1}{2} (\sigmaB)^2 \CDeltaLietOmegaL
\end{aligned}$}
$\vphantom{
\left\{\begin{array}{l}
0 \\
0
\end{array}\right.}$
& \eqref{eq:CDeltaLieA}$^*$ \\
\hline
$\DeltaLieJ$ & 
$\CDeltaLieJ = \cteDJ \CDeltaLieL + \cteDDJ \CDeltaK \CLieK$ &
\eqref{eq:CDeltaLieJ} \\
\hline
$\DeltaLieNO$ & 
$\CDeltaLieNO =
\CLieJ \CDeltaL + \CDeltaLieJ \CL\delta + \cteJ \CDeltaLieL + \CDeltaJ \CLieL \delta$
& \eqref{eq:CDeltaLieNO} \\
\hline
$\DeltaLieN$ & 
{$\!\begin{aligned} 
\CDeltaLieN = {} &
\CLieL \CDeltaA + \CDeltaLieL \CA + \CL \CDeltaLieA + \CDeltaL \CLieA
\\
& + \CLieNO \CDeltaB + \CDeltaLieNO \sigmaB + \CNO \CDeltaLieB + \CDeltaNO \CLieB
\end{aligned}$}
$\vphantom{
\left\{\begin{array}{l}
0 \\
0
\end{array}\right.}$
& \eqref{eq:CDeltaLieN} \\
\hline
$\Delta \Loper{N}$ & 
$\CDeltaLoperN = \cteDXH \CDeltaN + \cteDDXH \CDeltaK \CN \delta + \CDeltaLieN$
& \eqref{eq:CDeltaLoperN} \\
\hline
$\DeltaT$ &
$\CDeltaT = 
\CNT \cteOmega \CDeltaLoperN + \CNT \cteDOmega \CDeltaK \CLoperN \delta + \CDeltaNT \cteOmega \CLoperN$
& \eqref{eq:CDeltaT} \\
\hline
$\DeltaTOI$ &
$\CDeltaTOI = 2(\sigmaT)^2 \CDeltaT$ & \eqref{eq:CDeltaTOI} \\
\hline
\end{longtable}}
\egroup

\bgroup
{\scriptsize
\def\arraystretch{1.5}
\begin{longtable}{|l l|}
\caption{Constants introduced in Section \ref{ssec:proof:KAM} associated
with the convergence of que quasi-Newton method that yields
Theorem \ref{theo:KAM}, i.e., the ordinary formulation.} \label{tab:constants:all:3} \\
\hline
Constant & Label \\
\hline
\hline
$\mathfrak{C}_1 = \max
\left\{
(a_1 a_3)^{4\tau} \CE \, , \, (a_3)^{2\tau+1} \gamma^2 \rho^{2\tau-1} \CDeltatot
\right\}$
&
\eqref{eq:cte:mathfrakC1} \\
\hline
$\CDeltaI = 
\max
\Bigg\{
\frac{d \CDeltaK}{\sigmaDK-\norm{\Dif K}_{\rho}} \, , \,
\frac{2n \CDeltaK}{\sigmaDKT-\norm{\Dif K^\top}_{\rho}} \, , \, 
\frac{\CDeltaB}{\sigmaB-\norm{B}_{\rho}} \, , \,
\frac{\CDeltaTOI}{\sigmaT-\abs{\aver{T}^{-1}}}
\Bigg\}$
& \eqref{eq:CDeltaI} \\
\hline
$\CDeltaII = \frac{\CDeltaK \delta}{\dist(K(\TT^d_{\rho}),\partial \B)}$
& \eqref{eq:CDeltaII} \\
\hline
$\CDeltatot = \max
\Bigg\{
\frac{\gamma^2 \delta^{2\tau}}{\cauxT} \, , \,
2 \Csym \gamma \delta^\tau \, , \,
\frac{\CDeltaI}{1-a_1^{1-2\tau}} \, , \, 
\frac{\CDeltaII}{1-a_1^{-2\tau}}
\Bigg\}$
& \eqref{eq:CDeltatot} \\
\hline
$\mathfrak{C}_2 = a_3^{2\tau} \CDeltaK/(1-a_1^{-2\tau})$ &
\eqref{eq:Cmathfrak2} \\
\hline
$\mathfrak{C}_3 = \cteDc \mathfrak{C}_2$ &
\eqref{eq:Cmathfrak3} \\
\hline
\end{longtable}}
\egroup

\bgroup
{\scriptsize
\def\arraystretch{1.5}
\begin{longtable}{|l l l|}
\caption{Constants introduced in Lemma \ref{lem:KAM:inter:integral:iso}, i.e. in the
Iterative Lemma for the generalized iso-energetic KAM theorem.
In this table, given an object $X$, we use the notation $\Delta \LieO X = \Lie{\bar \omega} \bar X-\Lie{\omega} X$.
} \label{tab:constants:all:2:iso} \\
\hline
Object & Constant & Label \\
\hline
\hline
$\xi_0^N$ & $\CxiNO = \sigmaTc \max \Big\{\CNT \cteOmega {\gamma \delta^\tau} + {c_R}\CT \CLT \cteOmega \, , \, {\gamma \delta^\tau} + {c_R} \cteDc \CN \CLT
\cteOmega \Big\}$ & \eqref{eq:CxiNO:iso} \\
\hline
$\xio$ & $\Cxio= \CxiNO$ & \eqref{eq:CxiNO:iso} \\
\hline
$\omega$ & $\Comega = \sigmao \abs{\omega_*}$ & \eqref{eq:Comega} \\
\hline
$\Deltao$ & $\CDeltao = \Comega \CxiNO$ & \eqref{eq:CDeltao} \\
\hline
$\Lie{\omega} \xi^L$
& $\CLiexiL= \CNT \cteOmega {\gamma \delta^\tau}+ \CT {\CxiN} + \Comega {\Cxio}$ &
 \eqref{eq:CLiexiL:iso}\\
\hline
$\Elin$ &
$\Clin = \Cred \Cxi + \cteOmega \Csym \CLiexi \gamma \delta^\tau +
\max\{ \CA\,,\, 1\} \Clag \Comega \Cxio$ &
\eqref{eq:Clin:iso} \\
\hline
$\Elin^\omega$ &
$\Clin^\omega= d c_R \cteDc \CxiL$ &
\eqref{eq:Clino:iso} \\
\hline
$\Lie{\omega}\DeltaK$ &
$\CLieDeltaK = 
\CLieL \CxiL+ ( \CL \CLiexiL +  \CLieN \CxiN ) \gamma \delta^{\tau}
+\CN \CLiexiN\gamma^2 \delta^{2\tau}$
& \eqref{eq:CLieDeltaK} \\
\hline
$\bar E$ &
$\CE = 2(\CL +\CN) \Clin \gamma \delta^{\tau-1} + \tfrac{1}{2} \cteDDXH (\CDeltaK)^2
+ \Cxio \CLieDeltaK  \gamma \delta^\tau$ & \eqref{eq:CE:iso} \\
\hline
$\bar E^\omega$ & $\CEo = \Clin^\omega \gamma \delta^{\tau-1}  + \tfrac{1}{2} \cteDDc (\CDeltaK)^2$
& \eqref{eq:CEo:iso}
\\
\hline
$\bar E_c$ &
$\CEc = \max \{\CE \,,\,  \CEo \} $ & \eqref{eq:CEc:iso} \\
\hline
$\DeltaLieK$ & 
$\CDeltaLieK = {\sigmao \Cxio \CLieK}{\gamma \delta^\tau} + {\CLieDeltaK}$
& \eqref{eq:CDeltaLieKiso}\\
\hline
$\DeltaTc$ &
$\CDeltaTc = \max\{ 
\CDeltaT+\CDeltao \gamma \delta^{\tau+1}
\, , \, 
\cteDc\CDeltaN + \cteDDc \CDeltaK \CN \delta\}$
& \eqref{eq:CDeltaTc} \\
\hline
$\DeltaTcOI$ &
$\CDeltaTcOI = 2(\sigmaTc)^2 \CDeltaTc$ & \eqref{eq:CDeltaTcOI} \\
\hline
\end{longtable}}
\egroup

\bgroup
{\scriptsize
\def\arraystretch{1.5}
\begin{longtable}{|l l|}
\caption{Constants introduced in Section \ref{ssec:proof:KAM} associated
with the convergence of que quasi-Newton method that yields
Theorem \ref{theo:KAM:iso}, i.e., the iso-energetic formulation.} \label{tab:constants:all:3:iso} \\
\hline
Constant & Label \\
\hline
\hline
$\mathfrak{C}_1 = \max
\left\{
(a_1 a_3)^{4\tau} \CEc \, , \, (a_3)^{2\tau+1} \gamma^2 \rho^{2\tau-1} \CDeltatot
\right\}$
&
\eqref{eq:cte:mathfrakC1:iso} \\
\hline
$\CDeltaI = 
\max
\Bigg\{
\frac{d \CDeltaK}{\sigmaDK-\norm{\Dif K}_{\rho}} \, , \,
\frac{2n \CDeltaK}{\sigmaDKT-\norm{\Dif K^\top}_{\rho}} \, , \, 
\frac{\CDeltaB}{\sigmaB-\norm{B}_{\rho}} \, , \,
\frac{\CDeltaTcOI}{\sigmaTc-\abs{\aver{T_c}^{-1}}}
\Bigg\}$
& \eqref{eq:CDeltaI:iso} \\
\hline
$\CDeltaII = \frac{\CDeltaK \delta}{\dist(K(\TT^d_{\rho}),\partial \B)}$
& \eqref{eq:CDeltaII:iso} \\
\hline
$\CDeltaIII = \frac{\CDeltao \gamma \delta^{\tau+1}}{\dist(\omega,\partial \Theta)}$
& \eqref{eq:CDeltaIII:iso} \\
\hline
$\CDeltatot = \max
\Bigg\{
\frac{\gamma^2 \delta^{2\tau}}{\cauxT} \, , \,
2 \Csym \gamma \delta^\tau \, , \,
\frac{\CDeltaI}{1-a_1^{1-2\tau}} \, , \, 
\frac{\CDeltaII}{1-a_1^{-2\tau}} \, , \,
\frac{\CDeltaIII}{1-a_1^{1-3\tau}}  
\Bigg\}$
& \eqref{eq:CDeltatot:iso} \\
\hline
$\mathfrak{C}_2 = a_3^{2\tau} \CDeltaK/(1-a_1^{-2\tau})$ &
\eqref{eq:Cmathfrak2:iso} \\
\hline
$\mathfrak{C}_3 = a_3^{\tau} \CDeltao/(1-a_1^{-3\tau})$ &
\eqref{eq:Cmathfrak3:iso} \\
\hline
\end{longtable}}
\egroup

\end{document}